\documentclass[12pt]{article}
\usepackage[utf8]{inputenc}
\textheight = 23.0cm
\textwidth = 16.0cm
\topmargin = -1.0cm
\oddsidemargin = 0cm
\baselineskip = 18 true pt
\usepackage{amsmath,amssymb}
\usepackage{times}
\usepackage{array}
\usepackage[english]{babel}
\usepackage{wrapfig} 
\usepackage{amssymb,amsthm,color}
\input epsf
\usepackage{graphicx}

\theoremstyle{definition}
\newtheorem{defn}{Definition}[section]
\newtheorem{lemma}{Lemma}[section]
\newtheorem{proposition}{Proposition}[section]
\newtheorem{cor}{Corollary}[section]
\newtheorem{theorem}{Theorem}[section]

\newtheorem{remark}{Remark}

\def\conv{{\hbox{conv}\,}}

\def\diam{\hbox{diam}\,}
\def\dist{\hbox{dist}\,}

\def\bN{\mathbb{N}}
\def\bR{\mathbb{R}}

\def\d{\partial}\def\p{\partial}

\def\ep{\epsilon}

\def\cC{{\mathcal{C}}}

\def\cI{{\mathcal{I}}}
\def\cK{{\mathcal{K}}}
\def\cM{{\mathcal{M}}}

\numberwithin{equation}{section}
\newcommand{\subjclass}[1]{\bigskip\noindent\emph{2010 Mathematics Subject Classification:}\enspace#1}
\newcommand{\keywords}[1]{\noindent\emph{Keywords:}\enspace#1}

\begin{document}
\title{The planar Least Gradient problem in convex domains}
\author{Piotr Rybka, Ahmad Sabra\\
Faculty of Mathematics, Informatics and  Mechanics\\
The University of Warsaw\\
ul. Banacha 2, 02-097 Warsaw, POLAND}

\maketitle
\begin{abstract}
We study the two dimensional least gradient problem in a convex, but
not necessary strictly convex region. We look for solutions in the
space of $BV$ functions satisfying the boundary data $f$ in trace
sense. We  assume that $f$ is in $BV$ too. We state admissibility
conditions on the trace and on the domain that are sufficient for existence of solutions.

 \subjclass{Primary: 49J10, Secondary: 49J52, 49Q10, 49Q20}
 
 \keywords{least gradient, convex but not strictly convex domains, discontinuous data, trace solutions}
\end{abstract}

\section{Introduction}
We study  the least gradient problem
\begin{equation}\label{lg}
 \min \left\{ \int_\Omega |D u|: \ u\in BV(\Omega),\ T u = f\right\},
\end{equation}
where $\Omega$ is a bounded convex region in the plane  with Lipschitz boundary. We denote by $T: BV(\Omega) \to L^1(\partial\Omega)$ the trace operator.
We stress that we are interested only 
in solutions to (\ref{lg}) satisfying 
\begin{equation}\label{trace}
 T u = f,
\end{equation}
where $f$ is in $BV(\partial\Omega)\subsetneq  L^1(\partial\Omega)$.

Since publishing of the paper by Sternberg-Williams-Ziemer, \cite{sternberg}, the least gradient problem was broadly studied. In \cite{sternberg}  existence and uniqueness for continuous data $f$ were shown when the domain $\d\Omega$ has a non-negative mean curvature (in a weak sense) and $\d\Omega$ is not locally area minimizing.  These conditions in $\bR^2$ reduce to 
strict convexity of $\Omega$. 

The case of general $f\in L^1(\partial\Omega)$ was studied in
\cite{mazon}. However, in \cite{mazon} uniqueness is lost and
(\ref{trace}) does not hold in the classical sense. Moreover, the
authors of \cite{spradlin} show that the space of traces of solutions
to the least gradient problem is essentially smaller than
$L^1(\partial\Omega)$, then a solution to (\ref{lg}) does not
necessarily exist for $L^1$ data. Any characterization of the space of
traces of least gradient functions does not seem to be known
yet. However,  $f\in BV(\d\Omega)$ is sufficient for existence when
$\Omega\subset \bR^2$ is strictly convex, see \cite{gorny}. Another
approach to existence is presented in \cite[Theorem 1.5]{moradifam}. 
It is not applicable here, because the barrier condition does not hold.

A motivation to study (\ref{lg}) comes from the conductivity problem and free material design, see \cite{grs}. A weighted least gradient problem appears in medical imaging, \cite{nachman}. This is also the motivation to investigate the anisotropic version of (\ref{lg}), see \cite{jerrard}. 

A geometric problem of least area leads to a minimization problem like  (\ref{lg}),  where $\int_\Omega |Du|$  for $u\in BV(\Omega)$ is replaced by  $\int_\Omega f(D u)$,
where $f$ has linear growth, but need not be 
1-homogeneous. This problem attracted attention in the seventies', see \cite{soucek}, and it is active today \cite{beck}.

A common geometric restriction on the domain $\Omega$ in the above
papers boils down to strict convexity in plane domain. 
Here, the main difficulty is the lack of strict convexity of a convex region $\Omega$. Since we do not expect existence of solutions to (\ref{lg}), even in the case of continuous $f$, if $\Omega$ is merely convex, we
have to develop a proper tool. For this purpose we state admissibility
conditions on the behavior of $f$ on flat parts of the boundary. We
first do this when the data are continuous, in this case the
admissibility condition \# 1, means monotonicity of $f$ on any flat
part $\ell$. Condition \# 2 is geometric, strictly related to the
given boundary data $f$. Namely, it means that the data on a flat
boundary may attain maxima or minima on large sets, making creation of
level sets of positive Lebesgue measure advantageous,  see
\S\ref{three}. Since we deal here with oscillatory data, we assume
that $f$ is not only continuous but also has a bounded total variation. This additional assumption is used in our analysis, but we do not know if this condition is necessary. 
These admissibility conditions have to be modified in the case of discontinuous data, see \S\ref{subsec:disc}.

In Section \ref{adm}, we will explain the notion of the flat part of the boundary of $\Omega$, denoted by $\ell$. In the same section, we introduce the admissibility conditions for continuous and discontinuous functions.
Once we have them, we can state our main results. We make an additional assumption, when the number of flat parts is infinite. Namely, we assume that they have a single accumulation point.

We will address first the case of continuous data, which is interesting for its own sake.
\begin{theorem}\label{tsuf-C}
Let us suppose that $f\in C(\d\Omega)$, 
$\Omega$ is an open, bounded and convex set, $\{\ell_\alpha\}_{\alpha\in \cI}$ is the family of flat parts of $\d\Omega$. 
The flat parts have at most one accumulation point and if so the
condition from Definition \ref{one-side} holds. If $f$ satisfies the
admissibility conditions (\ref{admC1}) or  (\ref{admC2}) on all flat
parts of $\d\Omega$, then problem (\ref{lg}) has exactly  one
solution, i.e. the boundary data are attained in  the trace sense.
\end{theorem}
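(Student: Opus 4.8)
My plan is to reduce the least gradient problem to a study of its level sets, which are minimal surfaces—hence straight line segments in the plane—and then to construct the solution directly from the boundary data. The starting point is the classical observation (going back to Bombieri–De Giorgi–Giusti and exploited in \cite{sternberg}) that if $u$ is a least gradient function then for a.e.\ $t$ the boundary $\partial\{u>t\}$ is area-minimizing in $\Omega$, and in $\bR^2$ an area-minimizing boundary is a union of chords of $\Omega$. Thus solving \eqref{lg} amounts to prescribing, for each $t$, a (possibly empty) system of disjoint chords separating $\{f>t\}$ from $\{f<t\}$ on $\partial\Omega$, in a way that is monotone in $t$ (nesting of the superlevel sets) and such that the resulting function has the correct trace. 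The strategy therefore is: (i) for each level $t$, use the admissibility conditions to define a distinguished family of chords $\gamma_t$; (ii) verify that $t\mapsto\{u>t\}$ defined by these chords is monotone and measurable, so that $u(x)=\sup\{t: x\in\{u>t\}\}$ is a well-defined $BV$ function; (iii) show $Tu=f$; (iv) show $u$ is a minimizer; (v) show uniqueness.

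For step (i), on each flat part $\ell_\alpha$ the data $f$ restricted to $\ell_\alpha$ is, under admissibility condition \eqref{admC1}, monotone, so the points where $f=t$ on $\ell_\alpha$ form an interval (possibly degenerate); under \eqref{admC2} the level set may be a nondegenerate arc of $\partial\Omega$, and the condition is designed exactly so that it is advantageous to let the chord collapse onto that arc, creating a level set of positive area. On the strictly convex part of $\partial\Omega$ the construction is essentially that of \cite{sternberg}: the endpoints of $\gamma_t$ on the strictly convex boundary are forced uniquely by continuity of $f$ and the requirement that $\{f>t\}$ and $\{f<t\}$ be separated. The delicate point is to glue the chords coming from different flat parts and from the strictly convex arcs into one consistent nested family; here the single–accumulation–point hypothesis and Definition \ref{one-side} are used to control the behavior near the accumulation point, ruling out pathological oscillation of the chord endpoints. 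I would prove monotonicity of $t\mapsto\gamma_t$ by a direct geometric argument: two separating chords for levels $t<s$ cannot cross, because crossing would contradict $\{f>s\}\subset\{f>t\}$ on the boundary.

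Once $u$ is constructed, steps (iii)–(v) are comparatively routine but must be done carefully. For the trace, I would use the characterization of the trace of a $BV$ function in terms of the behavior of $u$ on the family of chords meeting a boundary point, together with continuity of $f$: as $x\to x_0\in\partial\Omega$ along a non-tangential path, $x$ lies between $\gamma_{f(x_0)-\epsilon}$ and $\gamma_{f(x_0)+\epsilon}$ for $x$ close enough, giving $|u(x)-f(x_0)|\le\epsilon$. For minimality, the cleanest route is the calibration/coarea argument: $\int_\Omega|Du|=\int_\bR \mathcal H^1(\partial^*\{u>t\}\cap\Omega)\,dt$ by the coarea formula, each $\partial^*\{u>t\}\cap\Omega=\gamma_t$ is a system of chords and hence length-minimizing among all sets with the same boundary trace at level $t$; comparing term by term against any competitor $v$ with $Tv=f$ (whose superlevel sets have the same boundary values $\mathcal H^1$-a.e.) yields $\int_\Omega|Du|\le\int_\Omega|Dv|$. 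Uniqueness follows because a length-minimizing chord system with prescribed endpoints on a convex domain is unique whenever the endpoints are determined, and our construction shows the endpoints are determined for a.e.\ $t$ by $f$ and the admissibility conditions; two solutions would then share a.e.\ superlevel set and hence coincide.

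The main obstacle I anticipate is step (ii)—establishing that the chord family $\gamma_t$ is genuinely monotone and that $u$ is $BV$—precisely in the regime where infinitely many flat parts accumulate. One must show that the total length $\int_\bR\mathcal H^1(\gamma_t)\,dt$ is finite (this is where $f\in BV(\partial\Omega)$, not merely $f\in C(\partial\Omega)$, enters: it bounds the total variation of the endpoint maps), and that no "concentration" of chords occurs at the accumulation point; the one-sided condition of Definition \ref{one-side} should be exactly what prevents chords from the two sides of the accumulation point from being forced to cross. Handling this uniformly, rather than flat-part-by-flat-part, is the technical heart of the argument.
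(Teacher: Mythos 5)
Your proposal takes a genuinely different route from the paper. The paper does not construct the level sets by hand: it approximates $\Omega$ from outside by strictly convex domains $\Omega_n$ (Lemma \ref{prap}), transplants the data to $f_n\in C(\d\Omega_n)$ so that the admissibility conditions survive the approximation, invokes the Sternberg--Williams--Ziemer theorem to get continuous minimizers $v_n$ on $\Omega_n$, proves a modulus-of-continuity estimate for $v_n$ that is \emph{uniform in $n$} (Lemma \ref{le2}, a long case analysis of how two level chords can sit relative to the flat parts --- this is where the admissibility conditions and the $\sqrt{r}$ in $\tilde\omega$ come from), and then passes to the limit by Arzel\`a--Ascoli plus Miranda's stability theorem, reading off the trace from uniform convergence. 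Uniqueness is quoted from the classification in \cite{gorny2}. Your plan instead builds $u$ directly from a prescribed chord system $\gamma_t$ and certifies minimality by the coarea formula; if carried out it would yield more explicit structural information, but it requires re-proving in the non-strictly-convex setting everything the paper imports from \cite{sternberg}.

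As written, the proposal has genuine gaps at exactly those points. First, step (i) is asserted rather than performed: the claim that on the strictly convex portion of $\d\Omega$ the endpoints of $\gamma_t$ are ``forced uniquely by continuity of $f$ and the requirement that $\{f>t\}$ and $\{f<t\}$ be separated'' is false in general, since $f^{-1}(t)$ may have many components and one must \emph{select} a non-crossing pairing of them that minimizes total length; constructing this selection consistently in $t$, and near the accumulation point of flat parts, is the entire difficulty, and nothing in the proposal does it. Second, the minimality argument presupposes that the constructed $\gamma_t$ is length-minimizing among all finite-perimeter sets whose characteristic function has trace $\chi_{\{f>t\}}$; this is the same unproved selection statement in disguise, so the coarea comparison against a competitor is circular until it is established. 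Third, the uniqueness argument (``endpoints are determined for a.e.\ $t$'') does not engage with the fat level sets that admissibility condition \#2 is specifically designed to produce, nor with the known non-uniqueness phenomena classified in \cite{gorny2}; uniqueness of the minimizing chord configuration at a given level needs a separate argument (in \cite{sternberg} this occupies a substantial part of the paper even for strictly convex domains). Finally, a smaller point: the theorem assumes only $f\in C(\d\Omega)$ together with the admissibility conditions, and in the paper the $BV$ hypothesis plays no role in this step; your appeal to $f\in BV(\d\Omega)$ to bound $\int_{\bR}\mathcal H^1(\gamma_t)\,dt$ signals that your construction has not yet isolated where finiteness of the total variation of $u$ actually comes from (in the paper it is inherited from the uniform bound $\int_{\Omega_n}|Dv_n|\le M$ and lower semicontinuity).
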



The strategy of our proof is as follows, we construct a sequence of strictly convex regions $\Omega_n$ converging to $\Omega$ in the Hausdorff distance. We also provide approximating data on $\d\Omega_n$. By classical result, see \cite{sternberg}, we obtain a sequence of continuous solutions $v_n$ to the least gradient problem (\ref{lg}) on $\Omega_n$.  After estimating the common modulus of continuity, we may pass to the limit using a result by \cite{miranda}. This is done in Section \ref{sCdata}.

Next, we extend this result to $f\in BV$. 
We admit an infinite  number of such flat pieces of the boundary.
However, for simplicity, we assume that they may accumulate at just one point. It turns out that the presence of infinitely many  flat parts leads to additional difficulties, which are more pronounced when $f$ is discontinuous. Our final result is:
\begin{theorem}\label{tsuf-D}
Let us suppose that the geometric assumptions on $\Omega$ specified in Theorem \ref{main1} hold, in particular $\Omega$ is convex. Moreover,
$f\in BV(\d\Omega)$ and it satisfies 
the admissibility conditions \eqref{admDC} or \eqref{admDC2}. Then, there exists a solution $u$ to the least gradient problem (\ref{lg}).
 \end{theorem}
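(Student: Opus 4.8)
The plan is to bootstrap from the continuous case, Theorem~\ref{tsuf-C}, by a twofold approximation: replace $f$ by continuous data that still satisfy the admissibility conditions, solve the resulting problems by Theorem~\ref{tsuf-C}, and pass to the limit using $BV$ compactness.

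First I would regularize the datum. Since $f\in BV(\d\Omega)$, its jump set $J_f\subset\d\Omega$ is at most countable and $\sum_{x\in J_f}|f(x^+)-f(x^-)|<\infty$. I would produce $f_k\in C(\d\Omega)\cap BV(\d\Omega)$ by replacing every jump of $f$ with a monotone continuous ramp over a short arc --- oriented compatibly with the behavior of $f$ on flat parts, and short enough to fit inside its flat part and to make the total modified region shrink as $k\to\infty$. Then $f_k\to f$ in $L^1(\d\Omega)$ and $|Df_k|(\d\Omega)\le|Df|(\d\Omega)$, while the key point is that $f_k$ inherits the admissibility conditions: if $f$ satisfies \eqref{admDC} (resp.\ \eqref{admDC2}) on a flat part $\ell_\alpha$, then $f_k$ satisfies \eqref{admC1} (resp.\ \eqref{admC2}) on $\ell_\alpha$ --- precisely the compatibility that the discontinuous conditions are formulated to guarantee. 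The single accumulation point of the flat parts is handled by letting the ramp lengths vanish fast enough near it. Theorem~\ref{tsuf-C} then applies to each $(\Omega,f_k)$, yielding a unique $u_k\in BV(\Omega)$, continuous up to $\d\Omega$, with $Tu_k=f_k$.

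Next I would establish uniform bounds. By the maximum principle for least gradient functions (truncation does not increase $\int_\Omega|Du_k|$ nor change the trace, so uniqueness forces $u_k$ to equal its truncation), $\|u_k\|_{L^\infty(\Omega)}\le\|f_k\|_{L^\infty(\d\Omega)}$, which is bounded uniformly in $k$. Comparing $u_k$ with an explicit extension $w_k$ of $f_k$ built in a boundary collar of fixed width, for which $\int_\Omega|Dw_k|\le C\,|Df_k|(\d\Omega)+C\|f_k\|_{L^1(\d\Omega)}$, minimality yields $\int_\Omega|Du_k|\le\int_\Omega|Dw_k|\le C'$. Hence $\{u_k\}$ is bounded in $BV(\Omega)$, so along a subsequence $u_k\to u$ in $L^1(\Omega)$ and a.e., $u\in BV(\Omega)$, and $\int_\Omega|Du|\le\liminf_k\int_\Omega|Du_k|$ by lower semicontinuity. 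To see that $u$ is a minimizer for $f$, I would take any competitor $v$ with $Tv=f$, glue $v$ to $f_k$ on a collar of $\d\Omega$ of width $\to0$ using $f_k\to f$ in $L^1(\d\Omega)$, obtaining $v_k$ with $Tv_k=f_k$ and $\int_\Omega|Dv_k|\to\int_\Omega|Dv|$, whence $\int_\Omega|Du|\le\liminf_k\int_\Omega|Du_k|\le\liminf_k\int_\Omega|Dv_k|=\int_\Omega|Dv|$.

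The remaining --- and principal --- difficulty is to show $Tu=f$, since the trace is continuous only under strict $BV$-convergence, which we do not have here. For this I would exploit the planar geometry: each $u_k$ is a least gradient function, so for every $t$ the set $\d\{u_k>t\}\cap\Omega$ is a union of chords of $\Omega$, and the same holds for $u$; along a further subsequence these boundaries converge locally in $\Omega$ for a.e.\ $t$, with $\{u>t\}$ as the limit. I would then track the endpoints of the chords on $\d\Omega$ --- for $u_k$ they are pinned by the level sets of $f_k$, through the boundary analysis underlying Theorem~\ref{tsuf-C} --- and use that the admissibility conditions prevent the chords from degenerating onto flat parts in the limit, to conclude that $\{u>t\}\cap\d\Omega$ and $\{f>t\}$ coincide up to $\cH^1$-null sets for a.e.\ $t$, that is, $Tu=f$. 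I expect this last step, in which the geometry of the infinitely many flat parts and the admissibility of $f$ are used most heavily, to carry the bulk of the work, the approximation and compactness steps being comparatively routine.
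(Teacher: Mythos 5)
There is a genuine gap, and it sits exactly where you locate it: the identification $Tu=f$. Everything before that point is routine and correct in outline (an $L^1$-bounded-variation approximation, compactness, lower semicontinuity, and a collar-gluing argument showing the limit has minimal total variation among competitors with trace $f$), but none of it prevents the limit from losing the boundary datum --- which is precisely the failure mode one must rule out, since for general $L^1$ data traces are not attained at all (Spradlin--Tamasan) and the trace operator is not continuous along $L^1$-convergent sequences in $BV(\Omega)$. Your proposed remedy, ``track the endpoints of the level-set chords and use admissibility to prevent degeneration onto flat parts,'' is a program rather than an argument: you give no mechanism forcing the chords of $\{u_k>t\}$ to converge to chords of $\{u>t\}$ with the correct endpoints, and this is where all the difficulty of the theorem is concentrated.

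The paper closes this gap with a different and quite specific device that your generic ramp approximation forgoes: the datum is approximated \emph{monotonically from both sides} by continuous functions $h_n\downarrow f$ and $g_n\uparrow f$ (built by mollification of the monotone parts of $f$, Lemma \ref{Aprox1}--Corollary \ref{Aprox2}, then corrected to keep admissibility, Lemmas \ref{gnadm1}--\ref{gnadm2}). A comparison principle (Proposition \ref{comparison}) then makes the corresponding continuous solutions $u_n$ decreasing and $v_n$ increasing, and pointwise monotone convergence of \emph{continuous} functions is exactly what allows the trace of the limit to be pinned down at every continuity point of $f$: if $Tu(x)>f(x)$ one picks $s$ strictly in between, finds $N$ with $h_N(x)<s$, uses continuity of $u_N$ to get $u_N<s$ on a ball around $x$, and monotonicity propagates this bound to all $n\ge N$ and hence to $u$, a contradiction; the sandwich $v\le u$ with $Tv\ge f\ge Tu$ on $C_f$ finishes the argument. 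This one-sided ordering is the missing idea in your proposal. Separately, note that the paper must treat the case of infinitely many humps on a flat part by an entirely different explicit construction (partitioning $\Omega$ into trapezoids $Q_i$ and triangles $T_i$, solving on each $Q_i$ and gluing, Lemma \ref{lm:u0 lg} and Theorem \ref{Main3}); your proposal does not distinguish this case, and your claim that the ramp-regularized $f_k$ inherits condition \#2 also needs care, since inserting ramps on a flat part $\ell$ enlarges $f_k^{-1}(e_i)$ inside $\ell$ and can violate the requirement that the minimizers $y_i,z_i$ lie in $\d\Omega\setminus\ell$.
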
 
 
We establish Theorem \ref{tsuf-D} by a proper approximation of
discontinuous data by continuous ones. We approximate $f$ from above
by a monotone decreasing sequence $h_n$ and from below by a monotone
increasing sequence $g_n$, this is done in Corollary \ref{Aprox2}. 
By Theorem \ref{tsuf-C} we will have sequences of solutions to
(\ref{lg}), $u_n$ corresponding to $h_n$ and $v_n$ corresponding to
$g_n$. 

The comparison principle will imply monotonicity of $u_n$ and $v_n$, hence $u_n$ (resp. $v_n$) will converge pointwise convergence to $u$ (resp. $v$) and we will have $u\ge v$. 
We will have tools to prove that the limit functions $u$ and $v$ have the correct trace. In general $u$ and $v$ need not be equal.

It is a natural and  interesting question, what happens when the
admissibility conditions are violated. This problem requires quite
different technique and it will be addressed elsewhere, here we present  simple examples in Section \ref{examples}. We claim that our admissibility conditions are sufficient and (almost) necessary for existence. 

It is well-known that for discontinuous data the uniqueness of
solutions is lost, see \cite{mazon} for a proper version of Brothers
example. However, a recent article \cite{gorny2} provides a classification of multiple solutions. This result is valid for strictly convex as well as for merely convex regions. We do not present further details in this direction.

\section{Admissibility criteria}\label{adm}

It is important to monitor the behavior of data on flat parts of the boundary. We need to introduce a few pieces of notation for this purpose. 

\begin{defn} \label{plask}
 A non-degenerate line segment $\ell$ will be called a {\it flat part} of the boundary of $\Omega$ if $\ell \subset\d\Omega$ and $\ell$ is maximal with this property. In particular, this definition implies that $\ell$ is closed. The collection of all flat boundary parts is denoted by $\{\ell_k\}_{k\in \cK}$.
\end{defn}

We expect that data must satisfy additional conditions in order to ensure existence of solutions. We will state these admissibility conditions for continuous and discontinuous data separately.

\subsection{The case of continuous data}\label{three}
Through  this  subsection we consider only continuous  boundary values $f$.

\begin{defn} \label{admC1} We shall say that
a continuous function $f\in C(\d\Omega)$ {\it satisfies the admissibility condition \#1} on a flat part $\ell$ if and only if $f$ restricted to $\ell$ is monotone.
\end{defn}

In order to present the admissibility conditions for functions which are not monotone we need more auxiliary notions. 
We associate with $f$ on a flat piece of the boundary, $\ell$, a family of intervals $\{I_i\}_{i\in \cI}$ such that $\bar I_i =[a_i, b_i]\subset \ell$. On each $I_i$  function  $f$ is constant and attains a local maximum or minimum and each  $I_i$ is maximal with this property. We note that in case of continuous functions $f$ the intervals $I_i$ are automatically closed. We also set $e_i = f(I_i^o)$, $i\in\cI$.
This definition of $e_i$ will be good also in the case of discontinuous data. For the sake of making the notation concise, we will call $I_i$ a {\it hump}.

After this preparation, we state the admissibility condition for non-monotone functions.
\begin{defn} \label{admC2}
 A continuous function $f$, which is not monotone on a flat part $\ell$, {\it satisfies the admissibility condition \#2} if and only if for each hump $I_i = [a_i, b_i]\subset\ell$, $i\in\cI$ the following inequality holds, 
 \begin{equation}\label{A}
 \dist(a_i, f^{-1}(e_i)\cap(\d\Omega\setminus I_i)) + \dist(b_i, f^{-1}(e_i)\cap(\d\Omega\setminus I_i)) < |a_i-b_i|.
\end{equation}
In addition we require that if $y_i$, $z_i\in \d\Omega$ are such that
\begin{equation}\label{defyz}
\dist(a_i, f^{-1}(e_i)\cap(\partial\Omega\setminus I_i)) = 
\dist(a_i, y_i),\qquad 
\dist(b_i, f^{-1}(e_i)\cap(\partial\Omega\setminus I_i)) = \dist(b_i, z_i),
\end{equation}
then $y_i,$ $z_i\in \d\Omega \setminus \ell$.
\end{defn}
We use here the notation $\dist (x,\emptyset)=+\infty$. Obviously, the admissibility condition \#2 does not hold if $f$ has a strict local maximum or minimum.

\begin{remark}
By definition, $f$ attains local maximum/minimum on $I_i$, even if a the hump contains any endpoint of $\ell$. Later, we will make comments, see Remark \ref{rem2}, about intervals contained in $\ell$, where $f$ is constant and such that one of their endpoints is a point from $\d\ell$, which are not hump in our sense.
\end{remark}

We would like to discover what are the consequences of the admissibility conditions.
In particular, we would like to know if the restriction of $f$ to a flat part 
$\ell$ can have an infinite number of local minima or maxima. Interestingly, the
answer depends upon the geometry of $\Omega$. Namely, we can prove the following
statement.
\begin{proposition}\label{pr-2.1}
Let us suppose $\ell$ is a flat piece of boundary of $\Omega$. In addition, $\ell$
makes an obtuse angle with the rest of $\partial\Omega$ at its endpoints,
$\partial \ell =\{p_l, p_r\}$. Then, if $f$ satisfies on $\ell$ the admissibility
condition \#2, then $f|_\ell$ has a finite number of humps.
\end{proposition}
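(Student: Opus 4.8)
The plan is to argue by contradiction: suppose $f|_\ell$ has infinitely many humps $\{I_i\}_{i\in\cI}$ with $\cI$ infinite. Since the humps are pairwise disjoint subintervals of the bounded segment $\ell$, their lengths $|a_i-b_i|$ must tend to zero along any enumeration, and the humps themselves accumulate somewhere on $\ell$; passing to a subsequence we may assume $a_i, b_i \to q$ for some point $q\in\ell$. The key structural fact I would extract from the admissibility condition \#2 is that for each hump $I_i$ there is a point $w_i \in f^{-1}(e_i)\cap(\d\Omega\setminus I_i)$ lying at distance less than $|a_i-b_i|$ from one of the endpoints of $I_i$ (indeed \eqref{A} forces at least one of the two distances to be less than $\frac12|a_i-b_i|$, say the one realized at $y_i$ or $z_i$). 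By \eqref{defyz} this nearby point lies in $\d\Omega\setminus\ell$. So each small hump near $q$ must "see" a point of $\d\Omega\setminus\ell$ within a distance comparable to the hump's own length.

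The heart of the argument is then a geometric contradiction using the obtuse-angle hypothesis at the endpoints $p_l, p_r$ of $\ell$. If the accumulation point $q$ is an interior point of $\ell$, then a neighborhood of $q$ in $\d\Omega$ consists entirely of points of $\ell$, so $\dist(q,\d\Omega\setminus\ell) > 0$; but for $i$ large the point $w_i$ lies within distance $|a_i-b_i| + \dist(a_i,q) \to 0$ of $q$ while also lying in $\d\Omega\setminus\ell$, which is impossible. If instead $q$ is an endpoint of $\ell$, say $q = p_r$, then I would use that the angle between $\ell$ and the adjacent boundary arc at $p_r$ is obtuse: this gives a cone estimate showing that any point of $\d\Omega\setminus\ell$ within distance $\delta$ of $p_r$ is at distance at least $c\,\delta$ from the interior points of $\ell$ that are $\delta$-away from $p_r$, for some constant $c>0$ depending only on the angle. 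Quantitatively, if $a_i$ is at distance $d_i$ from $p_r$ along $\ell$ and $w_i\in\d\Omega\setminus\ell$ is at distance $< |a_i-b_i| \le d_i$ from $a_i$, the obtuse angle forces $\dist(a_i, w_i) \ge c\, d_i$ with $c$ bounded away from zero, contradicting $\dist(a_i,w_i) < |a_i-b_i| = o(d_i)$ once we note $|a_i - b_i|/d_i \to 0$ (the humps shrink faster than their distance to the endpoint, since there are infinitely many disjoint ones between $q$ and any fixed hump).

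I expect the main obstacle to be making the second case fully rigorous, in particular two points: first, justifying that $|a_i-b_i|/d_i \to 0$ — this needs the observation that infinitely many disjoint humps lie between $p_r$ and a fixed reference hump, so the total length they occupy is finite while their number is infinite, forcing the ratio of a single hump's length to its distance from $p_r$ to vanish (one must be careful about the ordering of the humps along $\ell$ and handle the case where humps accumulate at $p_r$ from "both combinatorial sides"). Second, the cone/angle estimate: I would set up local coordinates at $p_r$ with $\ell$ along one axis, use convexity of $\Omega$ together with the obtuse-angle assumption to place the adjacent boundary arc outside a half-plane making an angle $>\pi/2$ with $\ell$, and then derive the lower bound $\dist(a_i,w_i)\ge c\,d_i$ by elementary trigonometry. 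A minor technical point to dispatch first is the reduction showing $\cI$ infinite implies the humps accumulate at a single point of $\ell$ after passing to a subsequence, and that their lengths go to zero; this is straightforward from disjointness and $\sum_i |a_i-b_i| \le |\ell| < \infty$. Once these pieces are in place, the contradiction closes the proof.
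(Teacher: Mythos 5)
Your overall strategy (pass to an accumulation point $q$ of the humps, then split according to whether $q$ is interior to $\ell$ or an endpoint) is viable, and your interior case is correct: $\dist(q,\d\Omega\setminus\ell)>0$ there, while $y_i$ or $z_i$ converges to $q$ inside $\d\Omega\setminus\ell$. The gap is in the endpoint case. The asymptotic $|a_i-b_i|/d_i\to 0$ that you yourself flag as the main obstacle is in fact \emph{false}: take humps whose endpoints sit at distances $2^{-i-1}$ and $\frac32\cdot 2^{-i-1}$ from $p_r$. These are pairwise disjoint, accumulate at $p_r$, and satisfy $|a_i-b_i|=\frac13\,\dist(a_i,p_r)$ for every $i$; disjointness forces the lengths to vanish, but not the ratio of a length to the distance from the accumulating endpoint. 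Without that asymptotic, a cone estimate $\dist(a_i,w)\ge c\,d_i$ with an unspecified $c\in(0,1)$ cannot be played off against $\dist(a_i,w_i)<|a_i-b_i|\le d_i$, since $|a_i-b_i|$ may lie in $[c\,d_i,d_i]$. A further imprecision compounds this: you take $w_i$ to be whichever of $y_i,z_i$ realizes a distance below $\frac12|a_i-b_i|$, but if that is $z_i$ (attached to the endpoint $b_i$ nearer to $p_r$), no contradiction is available at all --- $z_i$ may sit just beyond $p_r$ at distance much smaller than $|a_i-b_i|$ from $b_i$.

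The repair is to argue from the \emph{far} endpoint $a_i$, for which \eqref{A} still gives $\dist(a_i,y_i)<|a_i-b_i|$, and to sharpen the cone estimate to constant $1$: since the interior angle at $p_r$ is strictly obtuse and the adjacent boundary arc is convex, the chord direction from $p_r$ to $w\in\d\Omega\setminus\ell$ tends to the tangent direction of that arc as $w\to p_r$, so there is $\delta>0$ such that every $w\in\d\Omega\setminus\ell$ with $|w-p_r|<\delta$ makes an angle at least $\pi/2$ at $p_r$ with the direction of $\ell$; the law of cosines then yields $|a_i-w|\ge|a_i-p_r|\ge|a_i-b_i|$. As $y_i\to p_r$ (it lies within $|a_i-b_i|$ of $a_i\to p_r$), this contradicts $\dist(a_i,y_i)<|a_i-b_i|$ outright, with no ratio asymptotics needed. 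For comparison, the paper organizes the proof around the perpendicular strip $S(\ell)$ over $\ell$ and whether the segments $[a_i,y_i]$, $[b_i,z_i]$ stay inside it, but it rests on the same two facts you invoke; your decomposition is fine once the endpoint case is repaired as above.
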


\begin{proof}
Let us introduce a strip $S(\ell)$, defined as
$$
S(\ell) = (\bigcup_{x\in\ell} L_x)\cap \Omega,
$$
where $L_x$ is the line perpendicular to $\ell$ and passing through $x$. We notice that $S(\ell)$ is a stripe, so the intersection of its boundary with a convex set,
$\partial S(\ell) \cap \Omega$, consists of two line segments $s_l$ and $s_r$.

We can have the following situations, each of $[a_i, y_i], [b_i, z_i] $ may either be contained in $S(\ell)$  
or each of 
$[a_i, y_i], [b_i, z_i]$ may intersect $s_l \cup s_r$. Here $y_i$'s and $z_i$'s are defined in (\ref{defyz}).

We claim that there is only a finite number of segments $[a_i, y_i], [b_i, z_i] $ 
contained in $S(\ell)$. Indeed, if it were otherwise, then $b_{k_n} - a_{k_n}$ would converge
to  zero for a subsequence $k_n$ converging to infinity. On the other hand
$\min\{ \dist(a_i, \partial \Omega\cap S(\ell)\setminus\ell), \dist(b_i, \partial \Omega\cap S(\ell)\setminus\ell)\}\ge c_0>0.$
But these two conditions combined contradict the admissibility conditions \#2.

We claim that there is only a finite number of segments $[a_i, y_i], [b_i, z_i] $
intersecting $s_l \cup s_r =  \partial S(\ell) \cap \Omega $. 
We have to consider a few cases. 
Let us suppose that $[a_i, y_i]\cap s_t \neq \emptyset$, where $t= l$ or $r$. Here we adopt the following convention, $\ell =[p_l, p_r]$, and 
$$
\dist(a_i, p_r) > \dist(a_i, p_l)\quad\hbox{and}\quad 
\dist(b_i, p_r) < \dist(b_i, p_l).
$$
If we keep this in mind and $t = r$, then the geometry automatically implies that $[b_i, z_i]\cap s_l \neq \emptyset$.

We notice that neither segment $[p_l, a_i]$ nor $[a_i, p_r]$ can contain  $I_k$. If  it did, then segments $[a_i, y_i]$ and $[b_i, z_i]$ would intersect $s_r \cup s_l$. If we take into account that the angle at $p$ is obtuse we see that the length of $[b_i, z_i]$ is bigger than $|b_i - a_i|$, but this is impossible. A similar argument works for $p = p_r$. Hence, our claim follows.
\end{proof}

Actually, we will make the above statement even  more precise.

\begin{proposition}\label{p-2.2}
 Let us consider $s_l \cup s_r =\partial S( \ell)\cap \Omega$ and the set of all humps contained in $\ell$, $\{I_i\}_{i\in \cI}$, where $\bar I_i = [a_i,b_i]$. If $\partial\Omega$ forms obtuse angles at $\partial \ell$ (or there are lines tangent to $\partial\Omega$ there), then 
each of the intersections 
$$
s_l \cap \left( \bigcup_{i\in \cI}([a_i,y_i]\cup [b_i,z_i])\right), \qquad
s_r \cap \left( \bigcup_{i\in \cI}([a_i,y_i]\cup [b_i,z_i])\right),
$$ 
consists of at most one point. Here, $y_i,z_i$, $i\in \cI$ satisfy (\ref{defyz}).  In particular, $\ell$ contains at most one hump.
\end{proposition}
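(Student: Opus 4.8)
The plan is to work in coordinates adapted to $\ell$ and to use the length constraint \eqref{A} to show that the auxiliary segments $[a_i,y_i]$ and $[b_i,z_i]$ are too short to reach $s_l$ or $s_r$ except in a very rigid fashion. Place $\ell$ along the $x$-axis with $p_l$ at the origin, $p_r=(|\ell|,0)$ and $\Omega\subset\{y\ge 0\}$ near $\ell$, and let $\pi$ denote orthogonal projection onto the $x$-axis. The hypothesis that $\partial\Omega$ makes an obtuse angle (or has a tangent line) at $p_l$ and at $p_r$ guarantees that $s_l=L_{p_l}\cap\Omega$ and $s_r=L_{p_r}\cap\Omega$ are non-degenerate vertical chords issuing from $p_l$ and $p_r$ and, what we use repeatedly, that $\overline\Omega\cap\{x=0\}=\overline{s_l}$ and $\overline\Omega\cap\{x=|\ell|\}=\overline{s_r}$; moreover $S(\ell)$, being the intersection of a strip with a convex set, is convex. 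Since by \eqref{defyz} each $y_i$ and each $z_i$ lies in $\partial\Omega\setminus\ell$, its projection is either in the open interval $(0,|\ell|)$ — in which case it lies on the upper boundary arc over $\ell$ — or outside $[0,|\ell|]$, in which case it lies on $\Gamma_l:=\partial\Omega\cap\{x<0\}$ or on $\Gamma_r:=\partial\Omega\cap\{x>|\ell|\}$.

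First I would analyse a single hump $I_i=[a_i,b_i]$, writing $\alpha_i=\pi(a_i)$, $\beta_i=\pi(b_i)$ with $\alpha_i<\beta_i$. By \eqref{A} one has $\dist(a_i,y_i)<\beta_i-\alpha_i\le|\ell|$ and $\dist(b_i,z_i)<\beta_i-\alpha_i$. Since $[a_i,y_i]\subset\overline\Omega$ and $\overline\Omega\cap\{x=0\}=\overline{s_l}$, the segment $[a_i,y_i]$ can meet $s_l$ only when $\pi(y_i)<0$; but then $\dist(a_i,y_i)\ge\alpha_i-\pi(y_i)>\alpha_i$, and comparing with the bound above forces $2\alpha_i<\beta_i$. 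The same computation shows that $[b_i,z_i]$ can never meet $s_l$ (it would force $\dist(b_i,z_i)>\beta_i\ge\beta_i-\alpha_i$), and symmetrically that $[a_i,y_i]$ can never meet $s_r$, while $[b_i,z_i]$ meets $s_r$ only when $\pi(z_i)>|\ell|$, which forces $2\beta_i>|\ell|+\alpha_i$. Hence, in the spirit of the proof of Proposition \ref{pr-2.1}, each hump meets $s_l$ in at most one point — necessarily via $[a_i,y_i]$, and then with $a_i$ in the left half of $\ell$ — and meets $s_r$ in at most one point — necessarily via $[b_i,z_i]$, and then with $b_i$ in the right half of $\ell$.

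It remains to exclude two distinct humps both meeting $s_l$ (the case of $s_r$ being symmetric). Suppose $I_i$ and $I_j$ both met $s_l$; by the preceding step $\pi(y_i),\pi(y_j)<0$, and (humps being pairwise disjoint, with their left endpoints in the left half of $\ell$) we may assume $\beta_i<\alpha_j$. If $e_i=e_j$, then $I_i\subset f^{-1}(e_j)\cap(\partial\Omega\setminus I_j)$, so $\dist(a_j,y_j)\le\dist(a_j,I_i)=\alpha_j-\beta_i$; but $\pi(y_j)<0$ gives $\dist(a_j,y_j)>\alpha_j>\alpha_j-\beta_i$, a contradiction. If $e_i\ne e_j$, one first reduces to consecutive humps, so that $f$ is monotone on the segment of $\ell$ between $I_i$ and $I_j$, and then runs a similar distance estimate using the location of $f^{-1}(e_j)$ on $\ell$ and on $\Gamma_l$; this, together with the final passage from ``at most one point on each of $s_l$ and $s_r$'' to ``at most one hump'' — which is where the obtuse-angle hypothesis enters most essentially, forcing the auxiliary segments of every hump out of the convex set $S(\ell)$ through $s_l$ or $s_r$ — is the step I expect to be the main obstacle. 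Once these are in place, $s_l$ and $s_r$ each meet $\bigcup_{i\in\cI}\bigl([a_i,y_i]\cup[b_i,z_i]\bigr)$ in at most one point, and consequently $\ell$ carries at most one hump.
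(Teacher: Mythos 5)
Your first half is correct and coincides with the first step of the paper's own argument: placing $\ell$ on the $x$-axis, the admissibility inequality \eqref{A} gives $\dist(b_i,z_i)<\beta_i-\alpha_i\le\beta_i$, while $[b_i,z_i]\cap s_l\neq\emptyset$ would force $z_i$ to have non-positive abscissa and hence $\dist(b_i,z_i)\ge\beta_i$; so $[b_i,z_i]$ never reaches $s_l$ and, symmetrically, $[a_i,y_i]$ never reaches $s_r$. This is exactly the paper's remark that such a segment would be ``longer than $|b_i-a_i|$'' because of the obtuse angle, and your version of it is cleaner. Your treatment of two humps with $e_i=e_j$ is also sound (indeed, in that case the nearest point of $f^{-1}(e_j)\setminus I_j$ to $a_j$ would lie on $\ell$, which already contradicts the second requirement of Definition \ref{admC2}).

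The difficulty is that you stop precisely where the proposition still has to be proved, and you say so yourself. Two steps are announced but not executed: (a) excluding two humps $I_i$, $I_j$ with $e_i\neq e_j$ whose segments $[a_i,y_i]$, $[a_j,y_j]$ both meet $s_l$; and (b) the passage from ``at most one intersection point on each of $s_l$, $s_r$'' to ``at most one hump''. Neither is routine. For (a), the single-hump necessary conditions you derive ($\beta_i>2\alpha_i$ and $\beta_j>2\alpha_j$) are mutually compatible for disjoint humps, so no purely metric estimate along $\ell$ of the kind you sketch can close the case; some new input about where $f^{-1}(e_j)$ sits is required, and the paper's own route is different from yours: it does not split on $e_i$ versus $e_j$ at all, but locates the right endpoint $b_k$ of a hump inside $[a_i,a_j]$ and reduces to the already-established impossibility concerning $[b_k,z_k]$ and $s_l$. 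For (b), counting intersection points says nothing about a hump neither of whose auxiliary segments reaches $s_l\cup s_r$ (i.e.\ one whose $y_i,z_i$ lie on $\d\Omega\cap S(\ell)\setminus\ell$); two such humps would be invisible to your count. To conclude one must first show, combining the obtuse angle with the length bound \eqref{A}, that every hump is forced to send $[a_i,y_i]$ out through $s_l$ and $[b_i,z_i]$ out through $s_r$ --- this is the pigeonhole the paper invokes in its final sentence, and it is the genuinely geometric step. As written, your text reproduces the easy half of the paper's proof and correctly diagnoses where the substance lies, but it does not supply that substance, so it is not yet a proof of the proposition.
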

\begin{proof}
Our notational convention is that 
$$
\dist(p_l, a_i)<\dist(p_r, a_i), \qquad \dist(p_r, b_i)< \dist(p_l, b_i).
$$

Let us suppose our claim does not hold. The geometry implies that if $[b_i,z_i]\cap s_l\neq \emptyset$, (resp. $[a_i,y_i]\cap s_r\neq \emptyset$), then automatically
$[a_i,y_i]\cap s_l\neq \emptyset$, (resp. $[b_i,z_i]\cap s_p\neq \emptyset$). If so, then the interval  $[b_i,z_i]$ (resp. $[a_i,y_i]$) is longer then $|b_i-a_i|$ because the angle at $p$ is obtuse. 
 
Let us suppose now that $[a_i,y_i]\cap s_t\neq \emptyset$ and   $[a_j,y_j]\cap s_t\neq \emptyset$, where $t=l$ or $t=r$ and $j\neq i$. If this happens, we can find a $b_k$ in the interval $[a_i, a_j]\subset\ell$. We  can use the part that we have already shown to deduce our claim.

The same argument applies when $[b_i,z_i]\cap s_t\neq \emptyset$ and   $[b_j,z_j]\cap s_t\neq \emptyset$, where $t=l$ or $t=r$ and $j\neq i$.

If $\ell$ contained more than one hump, then there existed
$[a_i,y_i]\cap s_l\neq \emptyset$ and $[a_j,y_j]\cap s_l\neq \emptyset$ or
$[b_i,z_i]\cap s_r\neq \emptyset$ and $[b_j,z_j]\cap s_r\neq \emptyset$ for $i\neq j$. We have already seen that this is impossible.
\end{proof}

We will make further  observations about the structure of admissibility conditions. A particularly interesting is the case when $\partial\Omega$ has an infinite number of flat parts. For the sake of simplicity of presentation, we will assume that $\{\ell_k\}_{k \in \cK}$ has a single accumulation point, i.e. if $\ell_k = [p^k_l, p^k_r]$, then $p^k_l, p^k_r \to p_0$, as $k$ goes to infinity. We assume that $\ell_k= [p^k_l, p^k_r]$ are so arranged that $\dist( p^k_l,p_0) > \dist( p^k_r,p_0)$. In order to avoid  unnecessary complications, we assume that `almost all $\ell_k$'s are on one side of $p_0$'. In order to make it precise, we will denote by $H(\ell)$ the half-plane, whose boundary contains a line segment (or a line) $\ell$, $H(\ell)\cap \partial\Omega\neq \emptyset$ and $\Omega \subset H(\ell)$. 

\begin{defn}\label{one-side}
 Let $\nu$ be a unit vector parallel to $\ell$, by saying that {\it almost all $\ell_k$'s are on one side of $p_0$} we mean that there is a choice of $\nu$ so that infinitely many $\ell_k$ are contained in $\{y\in \bR^2:\ (y-p_0)\cdot\nu 
<0\}$, while the half-plane $\{y\in \bR^2:\ (y-p_0)\cdot\nu >0\}$ contains only their finite number.
\end{defn}
 
After this preparation, we will see what of restrictions imposes the admissibility condition \#2 on the boundary data and on any infinite sequence of flat parts. The boundary $\partial\Omega$ at $p_0$ may have a tangent line or form an angle. The angle may be obtuse (including the case of a tangent line) or acute. We see that solutions depend on the measure of the angle. Namely,
we noticed that if the angle is obtuse, then we can only have single humps $I_k$ on flat parts and a finite number of humps on $\ell$. If the angle is acute, then we may have an infinite number of humps on $\ell$,  accumulating at $p_0$. In addition, there may be an infinite number of flat parts accumulating at $p_0$.

In this subsection, we will consider the case of an obtuse angle. The acute angle will be treated later, in Lemma \ref{l-inter}, when $p_0$ is an endpoint of a flat part and $f$ restricted to $\ell$ satisfies the admissibility condition \#2.

When
we deal with  a sequence of flat parts $\{\ell_k\}_{k=1}^\infty$, then we 
assume that they are so numbered that $\dist(\ell_k,p_0)\to 0$ and almost all $\ell_k$ are on one side of $p_0$. We are ready to present the following fact.

\begin{lemma}\label{lm2.0}
Let us assume that the above condition holds and $\partial\Omega$ forms an obtuse angle at $p_0$. Moreover,  $f\in C(\partial\Omega)$ satisfies the admissibility conditions \#1 and \#2.
Then, there is $\rho>0$ with the following property.
If $\ell_k \subset B(p_0, \rho)\cap\d\Omega$, then intervals $[a_k,y_k], [b_k,z_k]$, where $y_k, z_k$ satisfy (\ref{defyz}), must intersect  $\partial S(\ell_k) \cap \Omega$.
\end{lemma}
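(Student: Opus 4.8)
The plan is to argue by contradiction, exploiting that the two alternative positions of the segments $[a_k,y_k]$ and $[b_k,z_k]$ --- either entirely inside the strip $S(\ell_k)$ or crossing its lateral boundary $\partial S(\ell_k)\cap\Omega$ --- behave very differently near an obtuse vertex $p_0$. Suppose no such $\rho$ exists. Then there is a sequence of flat parts $\ell_{k_n}$ with $\dist(\ell_{k_n},p_0)\to 0$ (using $\dist(\ell_k,p_0)\to 0$ and that almost all $\ell_k$ lie on one side of $p_0$, so after relabelling they all do), along which at least one of $[a_{k_n},y_{k_n}]$, $[b_{k_n},z_{k_n}]$ stays inside $S(\ell_{k_n})$. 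By symmetry of the argument I will treat the case that $[b_{k_n},z_{k_n}]\subset S(\ell_{k_n})$; the other is identical with $a$ and $b$ interchanged.

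First I would record the geometric input coming from the obtuse angle at $p_0$. Because $\partial\Omega$ makes an obtuse angle (or has a tangent line) at $p_0$, there is a fixed half-plane $H$ with $p_0\in\partial H$ and $\Omega\subset H$ near $p_0$, and more importantly there is a cone with vertex $p_0$, opening angle bounded below by some $\theta_0>0$, that contains $\Omega\cap B(p_0,r_0)$; equivalently, the two one-sided tangent directions at $p_0$ span an angle $\le \pi-\theta_0$. The key consequence I want is a lower bound on the width of the strip $S(\ell_k)$ relative to $\dist(p_0,\ell_k)$ and the length of $\ell_k$: since each $\ell_k$ is a chord of the convex set $\Omega$ whose endpoints converge to the obtuse vertex $p_0$, the segments $s_l^k, s_r^k = \partial S(\ell_k)\cap\Omega$ have length comparable to $|p^k_l-p^k_r|=|\ell_k|$ up to a constant depending only on $\theta_0$. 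Concretely, if $z_{k_n}\in S(\ell_{k_n})$ then $z_{k_n}$ lies in the portion of $\overline\Omega$ trapped between $\ell_{k_n}$ and the chord joining $s_l^{k_n}$ to $s_r^{k_n}$, a region of diameter $O(|\ell_{k_n}|)$, and hence $\dist(b_{k_n},z_{k_n})=\dist(b_{k_n}, f^{-1}(e_{k_n})\cap(\partial\Omega\setminus I_{k_n}))$ must be small --- of order $|\ell_{k_n}|$ at most.

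Next I would combine this with the admissibility condition \#2 on $\ell$, i.e.\ inequality \eqref{A}, applied to the hump $I_{k_n}=[a_{k_n},b_{k_n}]$: it forces $\dist(a_{k_n},y_{k_n})+\dist(b_{k_n},z_{k_n}) < |a_{k_n}-b_{k_n}| \le |\ell_{k_n}|\to 0$, so both distances, and in particular $|a_{k_n}-b_{k_n}|$, tend to $0$. But now I can invoke the \#2 condition on the \emph{flat part containing the accumulation point}, or rather the global structure: the value $e_{k_n}=f(I_{k_n}^o)$ is a local extremum of $f$, so as $n\to\infty$ the points $a_{k_n},b_{k_n}\to p_0$ and by continuity of $f$ the levels $e_{k_n}\to f(p_0)$. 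I would then argue, using admissibility \#1 (monotonicity on $\ell$) and \#2 together, that the nearest point of $f^{-1}(e_{k_n})$ outside $I_{k_n}$ cannot lie on $\ell_{k_n}$ itself (that is exactly the second clause of Definition \ref{admC2}, which forces $y_{k_n},z_{k_n}\notin\ell$), so these nearby level points must sit on curved arcs of $\partial\Omega$ squeezed against $p_0$; continuity of $f$ on $\partial\Omega$ then yields a contradiction with \eqref{A} being strict, because the right-hand side $|a_{k_n}-b_{k_n}|$ decays while the left-hand side cannot be forced below it uniformly once the relevant level sets are pinned near the obtuse corner. Choosing $\rho$ smaller than the scale at which any of these competing quantities first fails the inequality completes the argument.

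The main obstacle I anticipate is making the geometric comparison ``$\dist(b_k,z_k)$ is forced to be of order $|\ell_k|$ when $z_k\in S(\ell_k)$'' fully quantitative and uniform in $k$: one must control, using only the obtuse-angle hypothesis at $p_0$, how flat the region between $\ell_k$ and $s_l^k\cup s_r^k$ can be, and rule out that $\ell_k$ degenerates in a way that lets $z_k$ escape far along $\partial\Omega$ while still staying inside the strip. I expect this to be handled exactly as in the proof of Proposition \ref{pr-2.1} --- the obtuse angle guarantees that a segment from $b_k$ crossing $s_l^k$ would be longer than $|b_k-a_k|$, which is what feeds the contradiction --- so the real work is packaging that local geometry into a single radius $\rho$ valid for all $\ell_k\subset B(p_0,\rho)$, using that $\partial\Omega$ is Lipschitz and the angle at $p_0$ is fixed.
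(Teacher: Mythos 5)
Your overall shape (argue by contradiction using the strict inequality \eqref{A} together with $|\ell_k|\to 0$) is the right one, but the central geometric claim is stated backwards, and this breaks the chain of deductions. You assert that the components $s_l^k,s_r^k$ of $\partial S(\ell_k)\cap\Omega$ have length comparable to $|\ell_k|$, so that a point $z_k$ of $\partial\Omega$ lying inside the strip would be at distance $O(|\ell_k|)$ from $b_k$. What the obtuse angle at $p_0$ actually gives is the opposite: since the direction perpendicular to $\ell_k$ (pointing into $\Omega$) converges to a direction lying strictly inside the tangent cone of $\Omega$ at $p_0$ (this is exactly where obtuseness is used), the chords $s_l^k,s_r^k$ have length bounded \emph{below} by a fixed $c_0>0$ independent of $k$, while $|\ell_k|\to 0$. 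Consequently every point of $\partial\Omega\cap\overline{S(\ell_k)}\setminus\ell_k$ is at distance at least $c_0$ from $\ell_k$. This is essentially the whole proof: if $[b_k,z_k]$ (say) did not meet $\partial S(\ell_k)\cap\Omega$, then $z_k$ would lie on that far portion of the boundary (it cannot lie on $\ell_k$ by the second clause of Definition \ref{admC2}), forcing $\dist(b_k,z_k)\ge c_0$, which contradicts \eqref{A} as soon as $|a_k-b_k|\le|\ell_k|<c_0$; one then chooses $\rho$ small enough that all $\ell_k\subset B(p_0,\rho)$ satisfy this.

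With your version of the geometry --- the strip region has diameter $O(|\ell_k|)$, hence $\dist(b_k,z_k)$ is small --- the hypothesis that the segment stays inside the strip is perfectly compatible with \eqref{A}, so no contradiction is available, and your closing step (``the left-hand side cannot be forced below it uniformly once the relevant level sets are pinned near the obtuse corner'') is an unsupported assertion with no mechanism behind it: continuity and monotonicity of $f$ give no lower bound on $\dist(a_k,y_k)+\dist(b_k,z_k)$ by themselves. You also conflate the two distinct mechanisms appearing in the proof of Proposition \ref{pr-2.1}: the uniform lower bound $c_0$ is what rules out segments that \emph{stay inside} the strip, whereas the observation that a segment crossing $s_l\cup s_r$ is longer than $|b_i-a_i|$ uses the obtuse angles at the endpoints of $\ell$ and handles the \emph{other} alternative --- the latter is irrelevant to the case you need to exclude here.
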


\begin{proof}
We note that due to Proposition \ref{p-2.2} only one hump $I_k$ may be contained in $\ell_k$. Due to the obtuse angle at $p_0$, the length of each component of $\partial S(\ell_k)$ may be made strictly bigger than a fixed number $c_0$, while the length of $\ell_k$ goes to zero. If our claim were not true than the lengths of  $[a_k,y_k], [b_k,z_k]$ would exceed  $c_0$ in violation of the admissibility condition \#2.
\end{proof}

\subsection{The case of discontinuous data}\label{subsec:disc}
If function $f$ is not continuous, then the admissibility conditions have to be adjusted. In particular,
this is true regarding condition \#1. This is done below.

\begin{defn}\label{admDC} Let us suppose that $f:\d\Omega\to \bR$ is a function of bounded variation and $f$ restricted to  $\ell =[p_l,p_r]$ is
monotone. We shall say that $f$ {\it satisfies the admissibility condition \#1} if 
one of the following conditions holds:\\
(i)  $f$ is continuous at both endpoints of $\partial\ell$;\\
(ii) there is $\ep>0$ such that function $f$ restricted to $\ell \cup (B(x_0,\ep)\cap \d\Omega)$ is monotone, where $x_0\in\d\ell$ and $f$  is continuous in $\d\ell\setminus \{x_0\}$;\\
(iii) there is $\ep>0$ such that function $f$ restricted to 
$\ell \cup (\d\Omega \cap (B(p_l,\ep)  \cup B(p_r,\ep))$
is monotone. 
\end{defn}

If we recall  the notation introduced before Definition \ref{admC2}, then we 
notice that the definition of $e_i$ as $e_i = f(I_i^o)$ is unambiguous also in the case of discontinuous $f$.

\begin{defn}\label{admDC2}
We say that a function $f$ belonging to $BV(\d\Omega)$ satisfies the admissibility condition \#2 on a flat part $\ell$ if and only if for each closure $[a_i, b_i]$ of a hump, $i\in\cI$, the following inequality holds,
 \begin{equation}\label{DA}
 \dist(a_i, f^{-1}(e_i)\cap(\d\Omega\setminus I_i)) + \dist(b_i, f^{-1}(e_i)\cap(\d\Omega\setminus I_i)) < |a_i-b_i|.
\end{equation}
In addition we require that if $y_i$, $z_i\in\d\Omega$ are such that
$$
\dist(a_i, f^{-1}(e_i)\cap(\partial\Omega\setminus I_i)) = 
\dist(a_i, y_i),\quad 
\dist(b_i, f^{-1}(e_i)\cap(\partial\Omega\setminus I_i)) = \dist(b_i, z_i),
$$
then $y_i$, $z_i\in\d\Omega\setminus\ell.$ 
\end{defn}

We notice the geometric observations made in previous a subsection, i.e. Proposition \ref{pr-2.1}, Lemma \ref{lm2.0}, Corollary \ref{p-2.2}, are valid for discontinuous data satisfying the admissibility conditions. The proof does not depend on continuity.

\section{Construction of solutions for continuous data}\label{sCdata}

Solutions to (\ref{lg}) are constructed by the same limiting process which we used in \cite{grs}. We first find a sequence of strictly convex domains, $\{\Omega_n\}_{n=1}^\infty$, approximating $\Omega$. Then, we define $f_n$ on $\d\Omega_n$ in a suitable way. After this preparation, we invoke a classical result, \cite{sternberg}, to conclude existence of $\{v_n\}_{n=1}^\infty$, solutions to the least gradient problem in $\Omega_n$. 

The construction of strictly convex region $\Omega_n$ is easy, when at both endpoints of a flat piece there is a corner, then we simply add a piece of a circle arc. But when there is a tangent at one of the endpoints of $\ell$, then we have to perform  additional reasoning. This construction is presented in the following Lemma.

\begin{lemma}\label{prap}
 Let  us suppose that $\Omega\subset\bR^2$ is a convex region with finitely many flat parts $\ell_k$, $k\in\cK=\{1,\ldots, K\}$. Then,
 there is a sequence of strictly convex bounded regions, $\Omega_n$ containing $\Omega$ and such that $\bar\Omega_n$  converges to
 $\bar\Omega$ in the Hausdorff metric. 
\end{lemma}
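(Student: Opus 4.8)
The plan is to build $\Omega_n$ by enlarging $\Omega$ only near the finitely many flat parts, replacing each $\ell_k$ by a strictly convex arc bulging slightly outward, and leaving the rest of $\partial\Omega$ untouched. Since $\cK$ is finite, it suffices to perform a local modification near a single flat part $\ell=[p_l,p_r]$ and then superpose the finitely many modifications (choosing $n$ large enough that their supports, which shrink to the respective segments, remain pairwise disjoint and disjoint from all other flat parts). First I would dispose of the easy case: if $\partial\Omega$ has a genuine corner at both $p_l$ and $p_r$, i.e. the one-sided tangent directions at each endpoint differ from the direction of $\ell$, then for large $n$ one can inscribe through $p_l$ and $p_r$ a circular arc of radius $r_n\to\infty$ lying in the exterior of $\Omega$ but inside the cone spanned by the adjacent edges, replace $\ell$ by that arc, and the resulting region is still convex, now strictly convex along the former flat part, and contains $\Omega$; as $r_n\to\infty$ the arc converges uniformly to $\ell$, giving Hausdorff convergence.

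The substantive case is when $\partial\Omega$ has a tangent line at (at least) one endpoint of $\ell$, say at $p_r$, so that the boundary leaves $\ell$ with zero turning. Here simply splicing in a circular arc through $p_l$ and $p_r$ need not keep the region convex, because near $p_r$ the old boundary is already "as flat as" the new arc. The remedy I would use is to not require the new arc to pass through $p_r$: instead, pick a point $q_n\in\partial\Omega$ slightly beyond $p_r$ along the tangentially-attached piece, with $q_n\to p_r$, and replace the sub-arc of $\partial\Omega$ from $p_l$ (or from a point $q_n'\to p_l$, handled symmetrically if $p_l$ is also a tangent point) to $q_n$ by a strictly convex curve — e.g. a shallow circular arc or a small parabola-type graph over $\ell$ — that (a) is tangent to $\partial\Omega$ at $q_n$ (and at $q_n'$), matching first-order data there so that the spliced boundary stays $C^{1}$ and convex, (b) lies in $\overline{\Omega}^{\,c}$ away from the gluing points so that $\Omega_n\supset\Omega$, and (c) has strictly positive curvature in its interior. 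Convexity of $\Omega_n$ is then checked by verifying that the turning of the boundary is monotone: on the untouched part it is the turning of $\partial\Omega$, on the inserted arc it is strictly increasing, and the $C^1$ matching at $q_n',q_n$ ensures no jump in the outward normal; strict convexity of $\Omega_n$ follows since the only previously flat segment has been removed and every remaining straight sub-segment of $\partial\Omega$ is unaffected and still non-flat (a maximal flat part of $\Omega$ other than $\ell$). The height of the bulge can be taken $O(\mathrm{dist}(q_n,p_r))\to0$, which forces $\overline{\Omega}_n\to\overline{\Omega}$ in Hausdorff distance.

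The main obstacle is exactly step (a)–(c) in the tangent case: one must exhibit a one-parameter family of strictly convex arcs that simultaneously stays outside $\Omega$, shrinks to $\ell$, and glues in a $C^1$ (hence convexity-preserving) way to a boundary that may itself be merely Lipschitz and only tangent — not smooth — at the endpoints of $\ell$. I would handle the possibly non-smooth attachment by choosing $q_n$ to be a point of differentiability of $\partial\Omega$ (a full-measure set, so such points exist arbitrarily close to $p_r$) and matching the tangent line there; the short piece of $\partial\Omega$ between $p_r$ and $q_n$ then lies in a thin lens between $\ell$ and that tangent line, of width $o(\mathrm{dist}(q_n,p_r))$, so enclosing it inside the bulge costs nothing asymptotically. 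Finally, once all finitely many flat parts are treated, I would note that the modifications can be made for a common index $n$ with disjoint supports, that $\Omega\subset\Omega_n$ and each $\Omega_n$ is bounded, open, and strictly convex, and that $\sup_{x\in\partial\Omega_n}\mathrm{dist}(x,\partial\Omega)\to0$ together with $\Omega\subset\Omega_n$ yields the claimed Hausdorff convergence, completing the proof.
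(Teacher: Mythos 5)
Your overall strategy coincides with the paper's: modify $\Omega$ locally near each of the finitely many flat parts, handle the ``corner at both endpoints'' case with a circular arc on the chord $\ell_k$ (kept inside the cone of the adjacent edges, which is exactly the role of the paper's auxiliary lines $L_j$), and get Hausdorff convergence from the bulge height tending to zero. The divergence, and the problem, is in the tangent-endpoint case. Write $\partial\Omega$ near $\ell$ as the graph of a convex function $\psi$ with $\Omega$ above it, $\psi\equiv 0$ on $[x(p_l),x(p_r)]$, and let $a<b$ be the abscissas of your gluing points $q_n'$ (or $p_l$) and $q_n$. Your inserted arc is the graph of a convex $\phi_n\le\psi$ with $\phi_n=\psi$ at $a,b$. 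Convexity of the glued domain forces $\psi'(a^-)\le\phi_n'(a^+)$ and $\phi_n'(b^-)\le\psi'(b^+)$, while strict convexity of the arc forces $\phi_n'(b^-)>\phi_n'(a^+)$; so the arc's total turning is capped by $\psi'(b^+)-\psi'(a^-)$, and on top of that you need the pointwise constraint $\phi_n\le\psi$ on all of $[a,b]$. Your two suggested candidates do not meet all three constraints: a circular arc through $p_l$ tangent to the graph at $q_n$ has a curvature fixed by those incidence conditions, and if $\psi$ is less curved than that circle just to the left of $b$ the circle rises above $\psi$ there, so the union is not convex; while the natural ``max of the two tangent lines at $a$ and $b$, with the corner rounded'' does satisfy $\phi_n\le\psi$ and the derivative matching, but it contains two straight segments, so $\Omega_n$ acquires new flat parts and is not strictly convex. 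An arc satisfying all the constraints does exist (one must also take $q_n$ at a differentiability point with $\psi'(x(q_n))>0$, which maximality of $\ell$ guarantees, so that the turning budget is positive), but exhibiting it is precisely the content of the lemma in this case, and your proposal asserts it rather than constructs it.

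For comparison, the paper avoids the $C^1$ gluing altogether: at a tangent endpoint $p_1$ it picks $x_n\to 0$ with $a_n=\psi'(x_n)\to 0$, forms the piecewise-linear convex minorant $g_n$ built from the tangent line at $x_n$, and replaces $\psi$ by $\frac12(\psi+g_n)$ on $[0,x_n]$ and by the constant $\frac12(\psi(x_n)-a_nx_n)$ on the flat part. This manufactures a genuine corner at $p_1$ for an intermediate convex domain $\tilde\Omega_n$, after which the chord-and-arc construction of the corner case applies. If you want to keep your route, the missing step is an explicit convex $\phi_n$ with strictly increasing derivative squeezed between $\psi'(a^-)$ and $\psi'(b^+)$ whose primitive stays below $\psi$ and matches it at $a$ and $b$; otherwise the cleanest fix is to adopt the paper's corner-creation device.
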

\begin{proof}
We define $\Omega_n$ as follows. Let us suppose that 
$$
V=\{ v_m: \ m=1,\ldots, \cM\}
$$ 
is the set of all endpoints of flat pieces $\ell_k$, $k\in \cK$, of
$\d\Omega$, such $\ell_k$ meets  $\d\Omega\setminus\ell_k$ at  a positive
  angle, i.e. $\ell_k$ and the rest of $\d\Omega$ form a corner. Let us
  fix at each $v_j\in V$ a line, $L_j$, passing through $v_j$, which otherwise
  does not intersect $\Omega$.  We will construct $\Omega_n$ by adding
  a set bounded by $\ell_k$ and an arc with the same end-points as $\ell_k$.

1) First, we consider such $\ell_k$, that $\d \ell_k\subset V$. In this case,
we take a ball $B(p^n_k, r^n_k)$ with the following properties. 
The flat piece
$\ell_k$ is a cord of $S^1(p^n_k, r^n_k)$, the maximal distance of any
point from the arc $c^n_k = S^1(p^n_k, r^n_k)\setminus \Omega$ to $\ell_k$
does not exceed $\frac1n$ and $c^n_k$ intersects lines $L_j$ only at
$\d\ell_k$. Now, in order to define $\Omega_n$, we take the region
bounded by $c^n_k$ and $\ell_k$.

2)  Let us suppose that $p_1$ is an endpoint of $\ell_k$, which does not belong to $V$. We may assume that there is a positively oriented coordinate system, such that $L$ is the line containing $\ell_k $ and it coincides with the first coordinate axis and $\Omega$ is contained in the upper half plane. We may assume that $p_1 =(0,0)$ and $\ell_k =[-d,0]\times\{0\}$. There is $\rho>0$, such that 
$$
\d\Omega \cap B(p_1,\rho) =\{(x_1,x_2):\ x_2 =\psi^1(x_1), \ |x_1| <\bar\rho \}.
$$
By assumption on $p_1$, $\psi^1(0) =0$ and the derivative of $\psi^1$ exists at $x_1=0$ and 
$\frac d{dx_1} \psi^1(0)=0$. Since $\frac d{dx_1} \psi^1$ is increasing and there is no corner of $\d\Omega$ at $p_1$, we conclude that $\frac d{dx_1} \psi^1(x) \to 0$ as $x\to 0$. In particular, we can find positive $x_n$ converging to zero, such that $a_n:=\frac d{dx_1} \psi^1(x_n) \to 0$. We set 
$$
g_n(x) =\left\{
\begin{array}{ll}
 a_n(x-x_n) + \psi^1(x_n)& x\in [0,x_n],\\
\psi^1(x_n) - a_n x_n& x<0.
\end{array}
\right. 
$$
We define a convex function $\psi_n:(-\infty, \rho)\to \bR$ by the following formula,
$$
\psi^1_n(x) = \left\{
\begin{array}{ll} 
\psi^1(x) & x> x_n,\ (x,\phi(x))\in B(p_1,\rho),\\
 \frac12(\psi^1(x) + g_n(x))& x\in[0,x_n],\\
\frac12  (\psi^1(x_n) - a_n x_n )& -d<x<0.
\end{array}
\right. 
$$
We notice that $\psi^1_n$ is convex, has a corner at $x=0$, because the left-hand-side and the right-hand-side derivatives are different and $\psi^1_n$ converges uniformly to $\psi^1$.

2) We have to consider the other endpoint of $\ell_k$, i.e. $p_2:=(-d,0)$. We shall consider all the cases of possible configurations:

i) $p_2$ is not in $V$ and there is $\rho>0$ such that 
$$
B(p_2,\rho)\cap \d\Omega = \{ (x_1,x_2):\  x_2 = \psi^2 (x_1),\ |x_1+d| < \bar \rho\}.
$$
Moreover, $\frac d{dx_1} \psi^2(-d)$ exists and it equal to zero.
Then we repeat the construction we performed above, resulting in $\psi_n^2$, with the necessary changes. Finally, we end up with $\tilde\psi_n = \max\{\psi_n^1, \psi_n^2\}$. We may apply the construction in 1) to new $\tilde\Omega_n$, which is the sum of $\Omega$ and the epigraph of $\tilde\psi_n$.

ii) $p_2$ is  in $V$ and $p_2$  is the point of intersection of $\ell_k$ with $\ell_{k'}$, then we consider $\bar p_n$, the point of intersection of the line containing $\ell_{k'}$ with the graph of $\psi_n^1$. 
The
new auxiliary domain $\tilde\Omega_n$ is obtained by taking 
a region bounded by: the segment being a conv$(\ell_{k'}, \bar p_n)$, the graph of $\psi_n^1$ and the rest of $\d \Omega$ from $p_1$ till the endpoint of $\ell_{k'}$.


iii)  $p_2$ is  in $V$ but the second condition in ii) does not hold. In other words,
there is $\rho>0$ such that 
$$
B(p_2,\rho)\cap \d\Omega = \{ (x_1,x_2):\  x_2 = \psi^2 (x_1),\ |x_1+d| < \bar \rho\}.
$$
Moreover, the left derivative of $\psi^2$ at $-d$ is negative. We can find a quadratic polynomial $Q(x_1) = \alpha x_1^2 + \beta x_1 + \gamma$ so that: $Q(-d) = 0$,  
$Q'(-d) = \frac d{dx_1}\psi^2(-d^-)$ and $\alpha >0$. Let us denote by $x_v$ the point, where $Q'$ vanishes.

We define
$$
\tilde\psi^2(x) = \left\{
\begin{array}{ll} 
 \psi^2(x) & x_1\in (-\bar\rho-d, -d),\\
 Q(x)\chi_{(-\bar\rho,x_v)}(x) + Q(x_v)\chi_{[x_v,0)}(x) & x_1\in [-d,0).
\end{array}\right.
$$
The  construction presented in 2) is applied to $p_1$ resulting with $\psi^1_n$. For $x\in( -\bar\rho-d, \bar\rho)$ we set 
$\tilde \psi^3_n (x)= \max\{ \tilde\psi_2(x), \psi^1_n(x)\}.$
This creates an intermediate domain 
$\tilde\Omega_n$, we apply step 1) to it.





4) We notice that the obtained domain $\Omega_n$ is strictly convex, since such modification are only performed on finitely many components.

\end{proof}

Once we constructed $\Omega_n$, we define $f_n:\d\Omega\to \bR$ as follows.
On each $c^n_k,$ for $(x,y) = (t,\psi^n_\ell(t))$, we set, 
\begin{equation}\label{defn}
f_n(x,y) \equiv f_n(t,\psi^n_\ell(t)) = f(t), \quad n\in\bN.
\end{equation}
In other words, $f_n$ has the same regularity properties as $f$ does. 

In case we constructed an intermediate region $\tilde\Omega_n$ we define $\tilde f_n:\d\tilde \Omega\to \bR$ using (\ref{defn}). Next, we introduce $f_n$ by  (\ref{defn}).

\begin{cor}\label{c0}
 If $f_n$ is defined above, then $\omega_f$, the continuity modulus of $f$, is also the continuity modulus of $f_n$. Moreover, if  $\pi:\bR^2\to \bar\Omega$ is the orthogonal projection onto a closed convex set $\bar\Omega$, then 
$ \pi_n := \pi|_{\d \Omega_n}$ is one-to-one
and
$f_n \circ (\pi_n)^{-1}$ 
converges uniformly to $f$.
\end{cor}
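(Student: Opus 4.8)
Everything rests on identifying $f_n$ as a pull-back of $f$ along the nearest-point projection. Write $\pi\colon\bR^2\to\bar\Omega$ for the metric projection onto the closed convex set $\bar\Omega$ and $\pi_n=\pi|_{\d\Omega_n}$. The plan is to prove
\[
f_n=f\circ\pi_n\qquad\text{on }\d\Omega_n .
\]
On the part of $\d\Omega_n$ lying on $\d\Omega$ this is trivial, since there $\pi_n=\mathrm{id}$ and $f_n=f$ by \eqref{defn}. On a circular cap $c^n_k$ over a flat part $\ell_k$ with corners at both endpoints: working in the frame of the proof of Lemma \ref{prap} in which $\ell_k$ lies on the first axis, a point $P=(t,\psi^n_\ell(t))$ of the (thin, sagitta $\le\tfrac1n$) arc has the foot of its perpendicular onto that axis lying inside $\ell_k$; since the axis carries a supporting line of $\Omega$, this foot $(t,0)$ is the nearest point of $\bar\Omega$ to $P$, so $\pi_n(P)=(t,0)$ while $f_n(P)=f(t)=f((t,0))=f(\pi_n(P))$ by \eqref{defn}. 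On the finitely many caps built near a tangency endpoint one reads the same identity off the formulas of Lemma \ref{prap}, up to an error $\le\omega_f(C/n)$ produced on the $\tfrac1n$-thin modified strip; alternatively one simply replaces $f_n$ there by $f\circ\pi_n$, which changes nothing used later, and then $f_n=f\circ\pi_n$ holds exactly.

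Granting the identity, the first assertion is immediate: the metric projection $\pi$ onto a convex set is $1$-Lipschitz, so for $P,Q\in\d\Omega_n$,
\[
|f_n(P)-f_n(Q)|=|f(\pi_n P)-f(\pi_n Q)|\le\omega_f\bigl(|\pi_n P-\pi_n Q|\bigr)\le\omega_f\bigl(|P-Q|\bigr),
\]
using that $\omega_f$ is non-decreasing. For the second assertion, $\pi_n$ maps $\d\Omega_n$ into $\d\Omega$ (the nearest point of $\bar\Omega\subset\bar\Omega_n$ to a point of $\d\Omega_n$ cannot lie in the open set $\Omega$), it is onto $\d\Omega$ (points of $\d\Omega$ outside the finitely many modified regions are fixed by $\pi_n$; a point in such a region, or in the relative interior of an $\ell_k$, is the $\pi_n$-image of the unique point of $\d\Omega_n$ on the perpendicular through it), and it is one-to-one because the modifications are graphs over pairwise disjoint base segments, $\pi_n$ is injective on each such graph and on the fixed complement, and the images of distinct pieces have disjoint relative interiors. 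A continuous bijection of the compact Jordan curve $\d\Omega_n$ onto $\d\Omega$ is a homeomorphism, so $(\pi_n)^{-1}$ is well defined and continuous.

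For the third assertion, compose: $f_n\circ(\pi_n)^{-1}=f\circ\pi_n\circ(\pi_n)^{-1}=f$ away from the tangency caps, and on those caps, whose $\pi_n$-images shrink to points, $|f_n\circ(\pi_n)^{-1}-f|\le\omega_f(C/n)\to0$ by the uniform continuity of $f$ on the compact set $\d\Omega$; hence $f_n\circ(\pi_n)^{-1}\to f$ uniformly. The only genuinely non-routine point, and thus the main obstacle, is the bookkeeping at those tangency caps: one must verify from the explicit formulas of Lemma \ref{prap}---the averaged graphs $\psi^1_n$, the quadratic patch $\tilde\psi^2$, and their maxima $\tilde\psi_n=\max\{\psi^1_n,\psi^2_n\}$, $\tilde\psi^3_n=\max\{\tilde\psi_2,\psi^1_n\}$---that the nearest-point projection of the modified boundary onto $\bar\Omega$ stays within $O(1/n)$ of the point at which $f_n$ is evaluated, and that passing to these maxima preserves injectivity of $\pi_n$; everything else follows from the contraction property of $\pi$ and uniform continuity of $f$.
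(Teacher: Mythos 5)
Your proposal is correct and rests on exactly the same idea as the paper's proof, which consists solely of the observation that the nearest-point projection onto the convex set $\bar\Omega$ is nonexpansive ($|\pi x_1-\pi x_2|\le|x_1-x_2|$), with all remaining details---including the identification of $f_n$ with $f\circ\pi_n$ on the added caps, the injectivity of $\pi_n$, and the uniform convergence coming from the definition (\ref{defn})---explicitly left to the reader. Your write-up simply fills in those details (including the bookkeeping near the tangency modifications of Lemma \ref{prap}) along the route the paper intends, so no further comparison is needed.
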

\begin{proof}
 The argument is based on the observation that if $x_1, x_2\in \d\Omega_n$, then 
 $|\pi x_1 - \pi x_2 | \le  |x_1 -  x_2 |$. The details are left to the interested reader.
 
 The uniform convergence easily follows from the definition of $f_n$.
\end{proof}

We have to check that the distances, appearing in (\ref{A}), are well approximated through $f_n$, in the sense explained below. Let us assume  that the number of flat parts of $\d\Omega$ is finite, $\Omega_n$ are constructed in 
Lemma \ref{prap} and $f_n$ are defined above in (\ref{defn}). We assume that $I_i= [a_i, b_i]$ is a hump contained in  $\ell$. We denote the orthogonal projection onto the line containing $\ell$ by $\pi_\ell$. We set,
$$
\alpha^n_i := \pi_\ell^{-1}(a_i) \cap \d\Omega_n,\qquad
\beta^n_i := \pi_\ell^{-1}(b_i) \cap \d\Omega_n.
$$
We also denote by $y_i$, $z_i$ the points of $\d\Omega \setminus \ell$ defined by  (\ref{defyz}).

We have three possibilities for  $y_i$ and $z_i$: (i) both points belong to $\bigcup_{k\in\cI}\ell_k$, (ii)
one of these points belongs to $\bigcup_{k\in\cI}\ell_k$ while the
other one is in $\d\Omega\setminus\bigcup_{k\in\cI}\ell_k$  and the
last one is (iii) both points belong to
$\d\Omega\setminus\bigcup_{k\in\cI}\ell_k$. It is sufficient to
consider (i) and (iii), because they cover (ii) too. 

In case (i), since $\bar\Omega_n$ converges to $\bar\Omega$ in the Hausdorff distance, we conclude that 
$$
\lim_{n\to\infty} \dist(\alpha^n_i,y_i) + \dist(\beta^n_i,z_i)= \dist(a_i, y_i) + \dist(b_i, z_i) <|a_i -b_i|.
$$
As a result, the strict inequality holds for sufficiently large $n$.

In case (iii), we conclude that $y_i \in \ell'$ and $z_i \in \ell''$.  We consider the orthogonal projection $\pi_{\ell'}$, (resp. $\pi_{\ell''}$), onto  $\ell'$,  (resp. $\pi_{\ell''}$). We take
$$
\zeta^n_i := \pi_{\ell'}^{-1}(y_i) \cap \d\Omega_n,\qquad
\psi^n_i := \pi_{\ell''}^{-1}(z_i) \cap \d\Omega_n.
$$ 

Arguing as above, we conclude that 
\begin{equation}\label{rn-ap}
\lim_{n\to\infty} \dist(\alpha^n_i,\zeta^n_i) + \dist(\beta^n_i,\psi^n_i )
= \dist(a_i, y_i) + \dist(b_i, z_i) <|a_i -b_i|.
\end{equation}
As a result, the strict inequality holds for sufficiently large $n$.

Since the  number of flat parts is finite, our claim follows, i.e. we have shown:

\begin{cor}\label{c1} Let us suppose that $\Omega$ is open, bounded and convex and the number of flat parts of $\Omega$ is finite. We assume that
the function  $f\in C(\d\Omega)$ satisfies the admissibility 
condition \#2. Then,  (\ref{rn-ap}) holds for $\Omega_n$ and $f_n$ with sufficiently large $n$. \qed
\end{cor}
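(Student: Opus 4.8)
The statement to prove is Corollary~\ref{c1}: under the hypotheses (finitely many flat parts, $f\in C(\d\Omega)$ satisfying admissibility \#2), the limit relation \eqref{rn-ap} holds for $\Omega_n$, $f_n$ with $n$ large. The plan is essentially to assemble the pieces already developed in the paragraphs immediately preceding the corollary into a clean statement, so the proof is short and mostly bookkeeping.

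\medskip

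\textbf{Plan.} First I would fix a hump $I_i=[a_i,b_i]\subset\ell$ and recall the trichotomy for the location of the points $y_i,z_i$ defined by \eqref{defyz}: case (i) both lie on some flat parts, case (ii) one does and one does not, case (iii) neither does. As noted in the text, case (ii) is a hybrid handled by combining the arguments of (i) and (iii), so it suffices to treat (i) and (iii). In case (i), the projected preimages $\alpha_i^n,\beta_i^n$ on $\d\Omega_n$ converge (as $n\to\infty$) to $a_i,b_i$ by Hausdorff convergence $\bar\Omega_n\to\bar\Omega$ (Lemma~\ref{prap}), and $y_i,z_i$ themselves may be kept fixed or replaced by nearby points of $\d\Omega_n$; either way $\dist(\alpha_i^n,y_i)+\dist(\beta_i^n,z_i)\to\dist(a_i,y_i)+\dist(b_i,z_i)$, which is $<|a_i-b_i|$ by \eqref{A}. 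Hence the strict inequality survives for all large $n$. In case (iii) one has $y_i\in\ell'$, $z_i\in\ell''$ for flat parts $\ell',\ell''$, and one uses the projections $\pi_{\ell'},\pi_{\ell''}$ to define $\zeta_i^n,\psi_i^n\in\d\Omega_n$; again Hausdorff convergence gives $\dist(\alpha_i^n,\zeta_i^n)+\dist(\beta_i^n,\psi_i^n)\to\dist(a_i,y_i)+\dist(b_i,z_i)<|a_i-b_i|$, so \eqref{rn-ap} holds for large $n$.

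\medskip

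The only genuinely global step is passing from "for each hump, the inequality holds for $n$ large" to "for all humps simultaneously, the inequality holds for $n$ large". This is where the finiteness hypothesis on the number of flat parts is used, together with the fact (Proposition~\ref{p-2.2}, applicable here since $\d\Omega$ forms obtuse angles or has tangent lines at $\d\ell$ — though in the finite-flat-part case one may also invoke Proposition~\ref{pr-2.1}) that each flat part carries only finitely many humps. I would phrase it as: there are finitely many humps in total, call the corresponding thresholds $N_1,\dots,N_M$, and take $n\ge\max_j N_j$; then the strict inequality \eqref{rn-ap} holds uniformly over all humps, which is precisely the claim.

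\medskip

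\textbf{Main obstacle.} There is no real obstacle in the corollary itself — it is a packaging statement — but the subtle point worth being careful about is the convergence of the projected preimages $\alpha_i^n=\pi_\ell^{-1}(a_i)\cap\d\Omega_n$: one must check that for $a_i$ in the (relative) interior of $\ell$ the line $\pi_\ell^{-1}(a_i)$ meets $\d\Omega_n$ in exactly one point near $a_i$ (using that $\Omega_n\supset\Omega$ and the modification near $\ell$ is a small convex bump, and that the one-to-one property of $\pi_n$ from Corollary~\ref{c0} propagates), and that this point converges to $a_i$. Likewise in case (iii) one must ensure $\zeta_i^n,\psi_i^n$ are well defined, which is where one needs $y_i,z_i\in\d\Omega\setminus\ell$ — exactly the second requirement built into the admissibility condition \#2 — so that the relevant strips $S(\ell'),S(\ell'')$ are distinct from $S(\ell)$ and the projections do not interfere. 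Once these well-definedness points are noted, everything reduces to continuity of the distance function under Hausdorff convergence, and the proof closes as in the displayed computations preceding the corollary.
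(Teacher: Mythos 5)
Your proposal follows essentially the same route as the paper, whose proof of Corollary~\ref{c1} is exactly the case analysis (i)--(iii) displayed in the paragraphs preceding it: Hausdorff convergence of $\bar\Omega_n$ to $\bar\Omega$ gives convergence of the projected distances to $\dist(a_i,y_i)+\dist(b_i,z_i)<|a_i-b_i|$, and finiteness of the flat parts (hence of the humps, by Propositions~\ref{pr-2.1} and~\ref{p-2.2}) lets one take a single threshold $n$. Your added remarks on the well-definedness of $\alpha_i^n,\zeta_i^n$ and on the role of the requirement $y_i,z_i\in\d\Omega\setminus\ell$ are correct clarifications of points the paper leaves implicit, not a different argument.
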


\begin{remark}\label{rem2}
One may wonder what is the role of intervals $(p_l,b')$ or $(a', p_r)$ contained in a flat part $\ell = [p_l, p_r]$ and such that $f$ restricted to $(p_l,b')$ or $(a', p_r)$ is constant. We claim that such intervals do not affect our argument. We know, how to proceed, if this is a hump.

On the other hand, if such an interval  is not a hump, then 
the construction of \cite{sternberg} applied to $\Omega_n$ shows the optimal selection of the level sets. We can see that if $a'\in \d\{ u\ge f(a')\}$, (resp. $b'\in \d\{ u\ge f(b')\}$), then the other endpoint of $\d\{ u\ge f(a')\}$, (resp. $ \d\{ u\ge f(b')\}$), belongs to $\d\Omega \setminus \ell$.
\end{remark}

However, we may have an infinite number of flat parts of $\Omega$ or an infinite number of humps.
If so  (\ref{rn-ap}) contains an infinite number of conditions, which is not easy to  satisfy. This is why we introduce an intermediate stage of  construction of an approximation of $\Omega$. 

First, we consider the case of an infinite number of flat parts. We introduce a new piece of notation. 
We recall the assumptions we made on the flat parts, in case there is an infinite number of them.  They have a single accumulation point $p_0$ and almost all of them are on one side of $p_0$. If $\nu$ is the unit vector used to define the side of $p_0$, then for any $\rho>0$ we set
$$
B^+_\rho := \{x\in B(p_0,\rho):\ \nu\cdot(x-p_0) >0\},\qquad
B^-_\rho := \{x\in B(p_0,\rho):\  \nu\cdot(x-p_0) <0\}.
$$
We arrange $\ell_k$ so that $\dist(\ell_k, p_0)$ is a decreasing sequence.

\begin{lemma}\label{l-inter}
Let us suppose that $\Omega$ is convex with infinitely many flat parts $\{\ell_k\}_{k\in\cK}$, and almost all of them are on one side of a single accumulation point $p_0$. We also assume that $f\in C(\partial\Omega)$ satisfies admissibility conditions \#1 or \#2. In addition, if  $p_0$ is an endpoint of a flat part $\ell$, then we assume that 
$\ell$ has a finite  number of humps.

Then, there exists a sequence $\{\tilde \Omega^k\}_{l=1}^\infty$ of convex sets containing $\Omega$ and such that $\tilde \Omega^k$ has a finite number of  flat parts and $\overline{\tilde \Omega}^k$ converges to $\bar\Omega$ in the Hausdorff metric. Moreover, there are $\tilde f_k\in C(\partial \tilde \Omega^k)$ satisfying admissibility conditions \#1 or \#2 and
$$
\| \tilde f_k \|_{C(\partial \tilde \Omega^k)} \le \|  f \|_{C(\partial \Omega)}\quad\hbox{and}
\quad \omega_{\tilde f_k} \le {4} \omega_{ f},
$$
where $\omega_g$ denotes the continuity modulus of function $g$. In addition, if $\pi$ is the projection defined in Corollary \ref{c0}, then 
$\tilde \pi^k := \pi|_{\d \tilde \Omega^k}$ is one-to-one
and
$\tilde f_k \circ (\tilde\pi^k)^{-1}$ 
converges uniformly to $f$.
\end{lemma}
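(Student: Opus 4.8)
The plan is to construct $\tilde\Omega^k$ by ``cutting off'' the tail of the sequence of flat parts: we replace the infinitely many flat parts $\ell_{k+1}, \ell_{k+2}, \dots$ that accumulate at $p_0$ by a single convex arc, keeping $\ell_1,\dots,\ell_k$ untouched, and then we transplant the boundary data accordingly. Concretely, fix $k$ and let $\rho_k = \dist(\ell_k, p_0)$, so $\rho_k\to 0$. Consider the portion $\gamma_k$ of $\partial\Omega$ lying inside $B(p_0,\rho_k)$, together with the small neighbouring pieces needed to close it up between an endpoint of $\ell_k$ and a point on the other side of $p_0$ (recall almost all $\ell_j$ are on the $B^-_\rho$ side, so only finitely many lie on the $B^+_\rho$ side; enlarge $\rho_k$ slightly if necessary so that all the $B^+$-side flat parts are among $\ell_1,\dots,\ell_k$). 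First I would replace that portion $\gamma_k$ by the chord joining its two endpoints — this produces a convex set $\Omega'_k\supset\Omega$ — and then, since this chord may itself be a new flat part possibly meeting $\partial\Omega\setminus$chord tangentially or at a corner, I apply Lemma \ref{prap}-type surgery once more if needed to keep the count of flat parts finite. Since all but finitely many flat parts have been absorbed, $\tilde\Omega^k$ has finitely many flat parts, and because $\gamma_k\subset B(p_0,\rho_k)$ and the chord is contained in $\conv(B(p_0,\rho_k))$, the Hausdorff distance $d_H(\overline{\tilde\Omega}^k,\bar\Omega)\le C\rho_k\to 0$.

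Next I would define the data $\tilde f_k$ on $\partial\tilde\Omega^k$. On the part of $\partial\tilde\Omega^k$ that coincides with $\partial\Omega$ (outside $B(p_0,\rho_k)$) we simply set $\tilde f_k = f$. On the newly created chord we must interpolate between the two boundary values of $f$ at the endpoints of $\gamma_k$: since $f$ is continuous and $\gamma_k$ shrinks to $p_0$, these two values differ by at most $\omega_f(2\rho_k)$, so we can take $\tilde f_k$ on the chord to be the affine (monotone) interpolation between them. This guarantees $\tilde f_k$ is continuous, satisfies the admissibility condition \#1 on the new chord (it is monotone there), $\|\tilde f_k\|_{C}\le\|f\|_C$, and the continuity modulus only degrades by a controlled factor — the worst case being when the chord is traversed in parameter length comparable to $\rho_k$ while $f$ varies by $\omega_f(2\rho_k)$, which is where the constant $4$ comes from (one factor for the doubled diameter inside $B(p_0,\rho_k)$, another for possibly reparametrising the chord against the arc it replaces). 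On the finitely many flat parts $\ell_1,\dots,\ell_k$, $\tilde f_k$ still equals $f$, hence still satisfies \#1 or \#2 there; the new chord carries a monotone function so \#1 holds; and any flat part created by the auxiliary Lemma \ref{prap}-surgery near the chord's endpoints again carries a monotone affine piece. Finally, the projection statement: since $\tilde\Omega^k\supset\Omega$ is convex, $\pi|_{\partial\tilde\Omega^k}$ is one-to-one onto $\partial\Omega$ by the same argument as in Corollary \ref{c0} (nearest-point projection onto a convex set is $1$-Lipschitz and injective on the boundary of a larger convex body), and $\tilde f_k\circ(\tilde\pi^k)^{-1}\to f$ uniformly because the two functions differ only on $\pi(\gamma_k\cap\partial\tilde\Omega^k)\subset B(p_0,\rho_k)\cap\partial\Omega$, on which both are within $\omega_f(2\rho_k)$ of $f(p_0)$.

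The step I expect to be the main obstacle is verifying that admissibility condition \#2 is genuinely preserved — or, more precisely, that no \emph{new} hump is created on $\ell_1,\dots,\ell_k$ or on the chord whose associated distance inequality \eqref{A} could fail. The subtlety is that condition \#2 involves $f^{-1}(e_i)\cap(\partial\Omega\setminus I_i)$, a \emph{global} object: when we delete the tail $\gamma_k$ and replace it by a chord, the set $f^{-1}(e_i)$ could lose points (if the level $e_i$ was attained inside $\gamma_k$), which can only \emph{increase} the distances in \eqref{A} and thus potentially break the inequality. Here I would use the hypothesis that if $p_0$ is an endpoint of a flat part $\ell$ then $\ell$ has finitely many humps, together with Lemma \ref{lm2.0}/Proposition \ref{pr-2.1}: for $k$ large the only humps of $f$ whose closure meets $B(p_0,\rho_k)$ are confined to $\ell$ (if $p_0\in\partial\ell$) and, by the finiteness, for $\rho_k$ small enough there are none strictly inside $B(p_0,\rho_k)$ except possibly one adjacent to $p_0$; and the realizing points $y_i,z_i$ for humps $I_i\subset\ell_j$ with $j\le k$ satisfy, by \eqref{defyz} and the requirement $y_i,z_i\in\partial\Omega\setminus\ell$, that they lie at a \emph{fixed positive distance} from $p_0$ once $j$ is fixed — hence for $\rho_k$ smaller than that distance the relevant portion of $f^{-1}(e_i)$ is untouched by the surgery. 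This reduces the verification to the finitely many humps on $\ell_1,\dots,\ell_k$, each of which keeps its witness set, so \eqref{A} is inherited verbatim; the chord itself, carrying a strictly monotone affine function, has no humps at all. Assembling these observations gives the claimed $\tilde f_k$, completing the proof.
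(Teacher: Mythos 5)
There are two genuine gaps. First, your basic surgery goes the wrong way: if you replace the boundary arc $\gamma_k\subset\partial\Omega$ by the chord joining its endpoints, that chord lies \emph{inside} $\bar\Omega$ (convexity), so the resulting set satisfies $\Omega'_k\subset\Omega$, not $\Omega'_k\supset\Omega$ as you claim. The lemma requires $\tilde\Omega^k\supset\Omega$, and the later steps of the paper depend on this containment (the projection $\pi$ onto $\bar\Omega$, the restriction $\chi_\Omega v_n$ and the use of \cite[Proposition 4.1]{grs}). The paper instead enlarges the domain by following the \emph{tangent} line to $\partial\Omega$ at the endpoint $p^k_r$ of the last retained flat part, intersected with $H(\ell)$ and $B(p_0,\rho)$ (the set $\Omega^k_\rho$ in \eqref{Star}); that produces a superset with finitely many flat parts and the required Hausdorff convergence.

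Second, your treatment of admissibility \#2 near the cut does not close. You correctly identify the danger that a witness point $z_k\in f^{-1}(e_k)\cap(\partial\Omega\setminus I_k)$ for a hump on a retained flat part may lie in the excised region, but your fix --- ``take $\rho_k$ smaller than the distance from the witnesses to $p_0$'' --- is not available: the cut radius is forced to lie between $\dist(\ell_{k+1},p_0)$ and $\dist(\ell_k,p_0)$ in order to remove the tail while keeping $\ell_1,\dots,\ell_k$, and since $|b_k-z_k|<|a_k-b_k|\le|\ell_k|\to 0$, the witness for the hump on $\ell_k$ can sit inside the removed arc for every admissible choice of $\rho_k$. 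An affine interpolation on the new boundary piece then generically fails to reproduce the value $e_k$ at the correct distance from $b_k$, so \eqref{A} breaks for $\tilde f_k$. This is exactly why the paper's proof splits into cases (i)--(iv) and, in case (iii), manufactures a point $q'$ on the new boundary with $|b_k-q'|=|b_k-z_k|$ and defines $\tilde f_k$ via the interpolants $g_1,g_2$ (built from $\omega_f$) so that the witness is recreated on $\partial\tilde\Omega^k$ while the modulus of continuity is only multiplied by $4$. Your proposal needs both the tangent-line enlargement and this explicit data surgery to be a proof of the stated lemma.
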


\begin{proof}

We consider the sequence of all $\ell_k$  contained in $B(p_0, \rho)$ converging to $p_0$. We assume that they are so numbered that
$$
\dist (\ell_k, p_0) > \dist (\ell_{k+1}, p_0) .
$$

Under the above assumptions, we have the following situations:\\
(A) There is $\rho>0$ such that $B^+(p_0,\rho)\cap \partial\Omega $ is contained in a flat part $\ell$. If this happens, then we further restrict $\rho$ so that $B^+(p_0,\rho)\cap \ell$ contains no hump.\\
(B) For all  $\rho>0$ the set $B^+(p_0,\rho)\cap \partial\Omega $ is an arc, i.e. it does not contain any flat part.

Our goal is to construct approximations to $\Omega$. We first consider (A). 

Since $\Omega$ is Lipschitz continuous, then there is a ball with radius possibly smaller then selected above and denoted again by $\rho$, a coordinate system  and a function $\psi:\bR \to \bR$ such that 
$$
B(p_0,\rho) = \{ (x_1, x_2): \ x_2 = \psi(x_1),\ x_1\in (-\gamma, \delta)\}.
$$
Keeping in mind the numbering of $\ell_k$, we introduced above, we proceed as follows. We consider only those $\ell_k$ that are contained in $B(p_0,\rho)$. Let us write $\ell_k = [p^k_l, p^k_r]$
where $p^k_l = \psi(x^k_l)$,   $p^k_r= \psi(x^k_r)$ and  $x^k_l < x^k_r$. If ${\bf t}$ is a line tangent to $\Omega $ and passing through  $p^k_r$, whose equation is $x_2 = \alpha^k x_1 + \beta^k$, then we set
$$
\psi^k (x_1) =\left\{
\begin{array}{ll}
 \psi(x_1), & x_1\in(-\gamma,  x^k_r],\\
 \alpha^k x_1 + \beta^k, & x_1\in (x^k_r,\delta).
\end{array}\right.
$$
Next, we define
\begin{equation}\label{Star}
 \Omega_\rho^k = \{(x_1, x_2): \ x_2 =\psi^k(x_1) \} \cap H(\ell)\cap B(p_0,\rho).
\end{equation}
We notice that $\bar \Omega_\rho^k \to \bar\Omega\cap \bar B(p_0,\rho)$ in the Hausdorff metric. We set 
$$
\tilde \Omega^k = \Omega_\rho^k \cup \Omega.
$$ 
Of course,  $\tilde \Omega^k $ is convex and it has a finite number of flat parts. We set  $q = \partial H(l_k) \cap \partial H(\ell)$, because we will need this point to define the boundary data. 

We  have to define $\tilde f_k$ on $\d\tilde \Omega^k$. We consider  a few subcases. Actually, it is enough to specify $\tilde f_k$ on  $\d\tilde \Omega^k \cap B(p_0,\rho)$.
We have the following situations:

(i) The intersection $B^+(p_0,\rho)\cap \partial\Omega $ contains $[p_0, b')$ such that $f$ on this interval is constant and $[p_0, b')$ in $\ell$ is maximal with this property. We can find $y'\in (\d\Omega\setminus \ell)\cap f^{-1}( f[p_0, b'])$ such that
$|y' - b'| = \dist (b', (\d\Omega\setminus \ell)\cap f^{-1}( f[p_0, b']))$. Then, we consider only such $k$  that $\dist(\ell_k, p_0) < | y'- p_0|$.

(ii) The flat part $\ell_{k}$ intersecting $B^-(p_0,\rho)$ has a hump $I_{k} = [a_{k}, b_{k}]$ which is such that interval $[b_{k} , z_{k} ]$ intersects $\ell$.  

(iii) The flat part $\ell_{k}$ intersecting $B^-(p_0,\rho)$ has a hump 
$I_{k} = [a_{k} , b_{k} ]$ which is such that interval $[b_{k} , z_{k} ]$ belongs to the arc connecting $b_{k}$ and   $z_{k}$ and contained in  $B^-(p_0,\rho)$.

(iv) The flat piece $\ell_k$ and $B^+(p_0,\rho)\cap \ell $ have no hump.

In all those cases we pick $\tilde \Omega^k$ given by (\ref{Star}). 
Here are the definitions of the boundary data $\tilde f_k$.

Case (ii) is the simplest. We set 
$$
\tilde f_k(x) = \left\{
\begin{array}{ll}
 f(x) & x \in [p_l^k, a_k] \cup (\d\tilde\Omega^k \cap B(p_0,\rho)) \setminus \d H(\ell_k) ,\\ 
 f(x) & x \in (\ell \cap B(p_0,\rho)) \setminus [p_0, z_k],\\
 f(a_k)& x \in [a_k, q]\cup [q, z_k].
\end{array} \right.
$$
Of course $\tilde f_k$ is continuous and it satisfies the admissibility conditions.

For cases (i) and (iv), we set
$$
\tilde f_k(x) = \left\{
\begin{array}{ll}
 f(x) & x \in \partial\tilde\Omega^k \setminus \d \Omega, \\
 f(p_0)& x \in [p_0,q],\\
\min\{ \omega(|q-x|) + f(p_0), f(p^k_r)\},& x \in [p^k_r, q]\quad \hbox{if }f(p_0) \le f(p^k_r),\\
\max\{f(p_0)-\omega(|q-x|), f(p^k_r)\},& x \in [p^k_r, q]\quad \hbox{if }
f(p_0) > f(p^k_r).
\end{array} \right.
$$
Since $|q-p^k_r| \ge |p_0-p^k_r|$ and $|f(p_0) - f(p^k_r)|\le \omega_f( |p_0- p^k_r|$, then the above definition is correct.

Finally, we construct $\tilde f_k$ if (iii) occurs. For this purpose we find $q'\in \d \tilde \Omega^k\cap  B(p_0,\rho)$ such that $| b_k- q'| = | b_k- z_k|$, if it exists. 
If there is no such $q'$, then we set $q':=p_0$. In both cases  we set,
$$
\tilde f_k(x) = \left\{
\begin{array}{ll}
 f(x) & x \in [\partial\tilde\Omega^k \setminus \d H(\ell_k)] \cup [p^k_l, b_k],\\
g_1(x) & x \in [b_k,q'],\\
g_2(x) & x \in [q', q],\\
f(x)& x \in \ell\cap B(p_0,\rho),\\
f(p_0)& x \in [q,p_0].
 \end{array}
 \right.
$$
Here, we use
$$
g_1(x)= \left\{
\begin{array}{lll}
\min\{ \omega(|x- b_k| + f(b_k), f(q')\},& x \in [b_k,q'], &\hbox{if } f(b_k) \le f(q'),\\
\max\{f(b_k) -\omega(|x- b_k| , f(q')\},& x \in [b_k,q'], &\hbox{if } f(b_k) > f(q'),
\end{array} \right.
$$
$$
g_2(x)= \left\{
\begin{array}{lll}
\min\{ \omega(|x- q' | + f(q'), f(p_0)\},& x \in [q', p_0], &\hbox{if } f(q') \le f(p_0),\\
\max\{f(p_0) -\omega(|x- q'| , f(p_0)\},& x \in [q', p_0], &\hbox{if }  f(q') >f(p_0).
\end{array} \right.
$$

If (B) occurs then we can face:\\
$l_{k}$ intersecting $\partial B^-(p_0, \rho)$ has a hump, then we proceed as in case (iii);\\
$l_{k}$ intersecting $\partial B^-(p_0, \rho)$ has no hump, then we proceed as in case (iv).

We have to estimate the modulus of continuity of $\tilde f_l$. We notice that if
$x,y\in \d \tilde \Omega^k$, then $|\tilde f_k(x) - \tilde f_k(y)| \le 4 \omega_f(|x-y|)$.

Finally, the construction is such that $\tilde f_l (\pi^{-1} (\d\Omega) \cap \partial \tilde \Omega_l) $ converges uniformly to $f$.
\end{proof}

\bigskip
Our construction of solutions will be performed in a few steps. We first treat a (curvilinear) polygon and $f$ having a finite number of humps. In this situation we can estimate the modulus of continuity of solutions to the approximate solutions on $\Omega_n$. This is done in Lemma below.

\begin{lemma}\label{le2} Let us suppose that $f_n\in C(\d\Omega_n)$ is
  defined by (\ref{defn}). We  assume that  $f$ has
the  continuity modulus $\omega_f$ and it  has
  finitely many humps. Then, $v_n$ the unique solution to the
  least gradient on $\Omega_n$ with data $f_n$ exists and it is  continuous with
the  modulus of continuity $\tilde\omega_{f_n}$ and there exist $A, B>0$ independent of $n$, such that
$$
\omega_{v_n}(r) \le \omega_f\left( \frac r A + \frac {\sqrt r} B\right)=:\tilde\omega(r). 
$$
\end{lemma}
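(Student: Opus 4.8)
The existence and uniqueness of $v_n$ and its continuity follow directly from the classical result of Sternberg--Williams--Ziemer \cite{sternberg} applied to the strictly convex domain $\Omega_n$ with continuous data $f_n$, using Corollary \ref{c0} to control the modulus of continuity of $f_n$ (it equals $\omega_f$). The content of the lemma is the \emph{quantitative, $n$-independent} estimate on $\omega_{v_n}$, and the plan is to obtain it from the geometric description of level sets of least gradient functions. Recall that in the Sternberg--Williams--Ziemer theory, for each $t$ the boundary $\partial\{v_n > t\}\cap\Omega_n$ is a union of line segments (minimal surfaces in $\bR^2$) whose endpoints lie on $\partial\Omega_n$; moreover these level curves for distinct values of $t$ are pairwise disjoint (nested). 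The modulus of continuity of $v_n$ is therefore governed by how close together two such chords can be as a function of the difference of their levels.

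The main step is the following geometric claim: there are constants $A,B>0$, depending only on $\Omega$ (diameter, and the geometry of the finitely many flat parts and their endpoints) but not on $n$, such that if $x,y\in\overline{\Omega_n}$ with $|v_n(x)-v_n(y)|\ge s$, then $|x-y|\ge \frac{s'}{A}+\frac{(s')^2}{B}$ for the appropriate comparison, i.e. equivalently $\omega_{v_n}(r)\le\omega_f(r/A+\sqrt r/B)$. I would argue as follows. Fix $x,y$ and set $t_1=v_n(x)$, $t_2=v_n(y)$ with $t_1<t_2$; then $x$ lies in the closed region $\{v_n\le t_1\}$ and $y$ in $\{v_n\ge t_2\}$, so the segment $[x,y]$ must cross every level curve $\partial\{v_n>\tau\}$ for $\tau\in(t_1,t_2)$. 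Using the boundary trace $f_n$ and its modulus $\omega_f$, one relates $t_2-t_1$ to the distance along $\partial\Omega_n$ between the endpoints of the extreme level chords; the point is that the chords through $x$ and through $y$ have endpoints on $\partial\Omega_n$, and the arc-length separation of these endpoint pairs is at least of order $\omega_f^{-1}(t_2-t_1)$. Then a convexity/elementary-geometry estimate converts a lower bound on the boundary separation of the endpoints of two disjoint chords into a lower bound on the distance between the chords themselves in the interior. Here the flat parts are the delicate locus: two nearly-parallel chords landing on the \emph{same} flat part can be close in the interior even though their boundary-endpoint separation is not small, which is exactly why a linear term $r/A$ alone does not suffice and a square-root term $\sqrt r/B$ is needed — near a corner or near the endpoint of a flat part the chord directions vary, and a standard ``distance from a chord to a nested chord'' estimate in a convex domain with a lower bound on the turning of the boundary produces the $\sqrt{\cdot}$ behavior. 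The finiteness of the number of humps enters to guarantee that the admissibility configuration (via Corollary \ref{c1}) localizes these worst cases to finitely many flat parts, so the constants can be taken uniform.

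I expect the main obstacle to be precisely this interior separation estimate near flat parts and corners: one must show that the geometry of $\Omega_n$ near each flat part of $\Omega$ (the added circular arcs $c_k^n$, or the constructions of Lemma \ref{prap} near tangency points) is controlled uniformly in $n$, so that the curvature lower bound used in the square-root estimate does not degenerate as $n\to\infty$. Concretely, since the arcs $c_k^n$ flatten out as $n\to\infty$, a naive curvature bound is useless there; instead I would argue that a level chord with an endpoint on $c_k^n$ either has its other endpoint far away (on $\partial\Omega\setminus\ell_k$, controlled by admissibility and Corollary \ref{c1}) giving a direction that is uniformly transverse, or the relevant pair of chords both land on $c_k^n$, in which case the boundary data $f_n$ on $c_k^n$ is a reparametrization of $f$ on $\ell_k$ and one uses monotonicity/admissibility of $f|_{\ell_k}$ together with the fact that $c_k^n$ lies within $1/n$ of $\ell_k$ to transfer the estimate to the flat segment, where the separation of the chords is bounded below by the separation of their feet on $\ell_k$. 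Assembling these cases over the finitely many flat parts and corners, and absorbing the $n$-independent constants, yields $A,B$ and the stated bound $\omega_{v_n}(r)\le\omega_f(r/A+\sqrt r/B)=:\tilde\omega(r)$.
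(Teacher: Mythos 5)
Your high-level strategy coincides with the paper's: existence, uniqueness and continuity of $v_n$ come from Sternberg--Williams--Ziemer, the level-set boundaries $\partial\{v_n\ge t\}$ are chords of $\Omega_n$, and the modulus of continuity is controlled by converting a lower bound on the boundary separation of the endpoints of two nested chords into a lower bound on their interior separation, then reading off $t_1-t_2$ from $f_n$ via $\omega_f$. The gap is in the one step that actually produces the exponent. You attribute the $\sqrt r$ term to ``a lower bound on the turning of the boundary,'' but no such bound exists here: $\Omega$ is merely convex, the boundary between flat parts carries no curvature lower bound, and the inserted arcs $c^n_k$ flatten as $n\to\infty$ (which you notice, but your proposed fix only covers chords landing on those arcs). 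The configuration that genuinely forces the square root is two flat parts $\ell_1,\ell_2$ meeting at a vertex $V$ with angle $\gamma$, and two nested chords each joining $\ell_1$ to $\ell_2$ near $V$. The angle each chord makes with $\ell_1$ then degenerates, but at a rate proportional to the length of that chord's footprint on $\ell_2$ (through $\sin\gamma$), so the interior separation of the two chords is bounded below by $\sin\gamma/\diam\Omega$ times the \emph{product} of the two boundary separations; hence the smaller of the two boundary separations is at most $\sqrt{\diam\Omega/\sin\gamma}\,\sqrt{|x_1-x_2|}$, which gives the $\omega_f(A\sqrt{|x_1-x_2|})$ bound. Crucially, it is admissibility condition \#2 that confines this degenerate configuration to a fixed triangle $\bigtriangleup(V,b_{i_0},w_{i_0})$ near $V$, with $b_{i_0}$ the hump endpoint nearest $V$ and $w_{i_0}$ its partner from \eqref{defyz}; outside that triangle the chords are uniformly transverse to the flat parts they meet and the linear term $r/A$ suffices. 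Without the product estimate and this admissibility-based localization, your plan does not yield the stated form of $\tilde\omega$, nor the $n$-independence of $A$ and $B$.

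A secondary point: your observation that the segment $[x,y]$ crosses every intermediate level curve does dispose of fat level sets more cleanly than the paper's explicit Cases II--III (which insert intermediate points where $v_n$ takes prescribed values), but you still need the combinatorial reductions for chords whose endpoints fall on different pairs of flat or curved pieces --- the paper's subcases (d)--(f), which insert an auxiliary level chord through the common vertex of two flat parts, or replace an arc of $\partial\Omega$ by its chords --- so that every pair $x_1,x_2$ lands in one of finitely many configurations, each estimated with constants depending only on $\Omega$ and the data.
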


\begin{remark} We stress that $\tilde\omega$
  depends on $\omega_f$, $\Omega$ and the geometry of the data, but it
  does not depend on the number of humps.
\end{remark}

{\it Proof.\ } We notice that existence of $v_n$, solutions to (\ref{lg}) for each $\Omega_n$ and continuous $f_n$, follows from \cite[Theorems 3.6 and 3.7]{sternberg}.

In order to estimate $ \omega_{v_n}$, the modulus of continuity of $v_n$,
we consider a number of cases depending on the behavior
of flat pieces near the junction with the rest of $\d\Omega$. In
\cite{grs}, we could guess in advance the structure of the level set of
the solution. Here, it is much more difficult, so we use the fact that the  level set structure of $v_n$ is known. We set $E^n_t=\{ v_n(x)\ge t\}$. We know that $\d E^n_t$ is a sum of line segments. In general, we know that fat level sets may occur, so there may be points $x\in \Omega_n$, which do not belong to any $\d E^n_t$. 

So, 
the first situation we consider is:\\
{\it Case I}:\ \ $x_1, x_2\in \Omega_n$ belong to the
boundaries of the superlevel sets, i.e. there are $t_1,$ $t_2$ such
that $x_i\in \d E^n_{t_i}$, $i=1,2$. 

We have to estimate 
\begin{equation}\label{wew}
v_n(x_1) - v_n(x_2) = t_1 - t_2 = f_n(\bar x^{t_1}) - f_n(\bar x^{t_2}), 
\end{equation}
for properly chosen points $\bar x^{t_i} \in \d\Omega_n\cap \d E^n_{t_i}$, $i=1,2$, 
in terms of the continuity modulus of $f$. Existence of $\bar x^{t_i}$ is guaranteed by \cite{sternberg}.

We have to estimate the distance between the intersection of $\d\Omega_n$ and $E^n_{t_i}$, $i=1,2$.
We will consider a number of subcases. Here is the first one:

(a) There
exist flat parts of $\Omega$, $\ell_1$, $\ell_2$, which are parallel and  such that $E^n_{t_i}$, $i=1,2$, intersect both of them. 
We will use the following shorthands, $\d E^n_{t_i} =: e_i$, $i=1,2$, see also Fig. 1.

\begin{figure}
{\centering
\includegraphics[width=10cm,height=6cm]{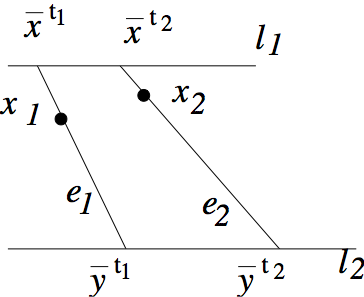}  
  \caption{Case a.}
  }
  \end{figure}

The first observation is obvious,
$$
|x_1 -x_2| \ge\min\{ \dist(x_1, e_2), \dist(x_2, e_1)\} \ge  \dist(e_1, e_2).
$$
Let us write $\{x^{t_1},x^{t_2}\} = \ell_1\cap( e_1 \cup e_2)$ and 
$\{y^{t_1},y^{t_2}\} = \ell_2\cap( e_1 \cup e_2)$.
If $\alpha_i$ is the angle formed by $e_i$ and $\ell_1$ or $\ell_2$, $i=1,2$, then 
$$
\dist(e_1, e_2) \ge \min \left\{ 
|y^{t_2} - y^{t_1}|\sin \alpha_2,|x^{t_2} - x^{t_1}|\sin \alpha_1
\right\}.
$$
We may estimate $\alpha_1$, $\alpha_2$ from below by $\alpha$, such that 
$$
\tan \alpha = \frac{\dist(\ell_1, \ell_2)}{\diam(\pi_2 \ell_1 \cup \ell_2)} \ge
\frac{\dist(\ell_1, \ell_2)}{\diam(\Omega)},
$$
where $\pi_2$ is the orthogonal projection onto the line containing $\ell_2$.

We may continue estimating the right-hand-side (RHS) of
(\ref{wew}). If $|x^{t_1} - x^{t_2}| <|y^{t_1} - y^{t_2}|$, then we
choose for $\bar x^{t_i}$ the point in $\d\Omega_n\cap \d E_{t_i}$, which is closer to $\ell_1$. Then, 
\begin{equation}\label{p-wacek}
|v_n(x_1) - v_n(x_2)| = |f_n(\bar x^{t_1}) - f_n(\bar x^{t_2})| = 
|f(\pi_1 \bar x^{t_1}) - f(\pi_1 \bar x^{t_2})|
\le 
\omega_f(| \pi_1\bar  x^{t_1} - \pi_1\bar  x^{t_2}|),
\end{equation}
where $\pi_1$ is the orthogonal projection onto the line containing $\ell_1$. We also use here the definition of $f_n$. We notice that our construction yields,
\begin{equation}\label{wacek}
|\pi_1 \bar x^{t_1 } - \pi_1\bar  x^{t_2}|\le |x^{t_1} - x^{t_2}|\le  |x_1 - x_2|/\sin \alpha. 
\end{equation}
Hence,
$$
 |v_n(x_1) - v_n(x_2)| \le \omega_f(| x_1 - x_2|/\sin\alpha).
$$

If  $|x^{t_1} - x^{t_2}| \ge |y^{t_1} - y^{t_2}|$ we continue in a similar fashion. Namely, we 
choose the points in $\d\Omega_n\cap \d E_{t_i}$, which are closer to $\ell_2$
and we call them $\bar  y^{t_i}\in e_i$, $i=1,2$. We conclude that
\begin{equation}\label{wacek2}
|\pi_1 \bar y^{t_1 } - \pi_1\bar  y^{t_2}|\le |y^{t_1} - y^{t_2}|.
\end{equation}
Hence,
$$
 |v_n(x_1) - v_n(x_2)| \le \omega_f(| y^{t_1} - y^{t_2}|/\sin\alpha).
$$
As a result, we reach
\begin{equation}\label{om_1}
 |v_n(x_1) - v_n(x_2)| \le \omega_f(|x_1 -x_2|/\sin \alpha).
\end{equation}

(b) The next subcase is, when $e_1$ and $e_2$ intersect $\ell_1$ and $\ell_2$, which are not parallel and $\ell_1 \cap \ell_2 =\emptyset$, see Fig. 2. We proceed as in subcase (a).

\begin{figure}
{\centering
\includegraphics[width=10cm,height=8cm]{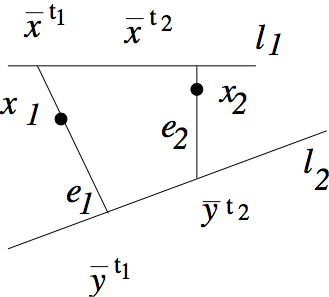}  
  \caption{Case b.}
  }
  \end{figure}

We have to estimate $|x_1-x_2|$ from below. Of course we have,
$$
|x_1 -x_2| \ge \min\{\dist(x_1, e_2), \dist(x_2, e_1)\}\ge
\min\{\dist(x^{t_1}, L(e_2)), \dist(y^{t_1}, L(e_2))\},
$$
where $L(v)$ is the line containing a (nontrivial) line segment
$v$. We notice that if $\beta_{ij}$ is the angle, which $e_i$ forms with $\ell_j$, then
$$
\sin \beta_{11}=\frac{\dist(x^{t_2},e_1)}{|x^{t_2}-x^{t_1}|}, \quad
\sin \beta_{12}=\frac{\dist(y^{t_2},e_1)}{|y^{t_2}-y^{t_1}|}, \quad
\sin \beta_{21}=\frac{\dist(x^{t_1},e_2)}{|x^{t_2}-x^{t_1}|}, \quad
\sin \beta_{22}=\frac{\dist(y^{t_1},e_2)}{|y^{t_2}-y^{t_1}|}.
$$
We want to find an estimate from below on $\beta_{ij}$. We can see that $\beta_{ij} \ge \beta_m$, $i,j=1,2$, where
$$
\sin\beta_m =\min\left\{ 
\frac{\dist(\d \ell_1, \ell_2)}{|\pi_2\ell_1|}, 
\frac{\dist(\d \ell_2, \ell_1)}{|\pi_1\ell_2|}\right\},
$$
where $\pi_i$ is the orthogonal projection onto the line $L(\ell_i)$, $i=1,2$.

Combining these estimates, we can see that
$$
|x_1-x_2| \ge \min \{ |x^{t_2}-x^{t_1}|, |y^{t_2}-y^{t_1}|\} \sin \beta_m.
$$
In this way we obtain
$$
v_n(x_1) - v_n(x_2) = t_1 - t_2 
$$
where $t_i = f(\bar x^{t_i})$ or $t_i = f(\bar y^{t_i})$, $i=1,2$ and $\bar x^{t_i}$, $\bar y^{t_i}$, 
$i=1,2$ are defined as in step (a). 
Arguing as in step (a), we reach the same conclusion as in (\ref{wacek}) or (\ref{wacek2}).
Hence, 
\begin{equation}\label{om_2}
|v_n(x_1) - v_n(x_2)| \le \omega_f(\min\{|x^{t_2}-x^{t_1}|, |y^{t_2}-y^{t_1}|\}) \le 
\omega_f( |x_2-x_1|/\sin\beta_m). 
\end{equation}
(c) The next subcase is when $e_1$ and $e_2$ intersect $\ell_1$ and $\ell_2$, which are not parallel and $\ell_1 \cap \ell_2 =\{V\}$, see Fig. 3. 
\hspace{4cm}
\begin{figure}[h]
{\centering
\includegraphics[width=10cm,height=6cm]{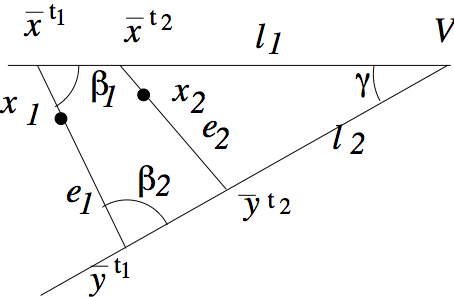}  
  \caption{Case c.}
  }
  \end{figure}


If this happens, then the admissibility conditions restrict positions of $y^{t_1}$, $y^{t_2}\in \ell_2$, relative to $x^{t_1}$, $x^{t_2}$. Indeed, we can find $i_o\in \cI$ so that
$$
\min\{\dist(a_{i_o},V),\dist(b_{i_o},V)\} = \dist(I_{i_o},V)
=\min \{\dist(I_j,V):\ I_j\subset \ell_1\}.
$$
Due to condition (\ref{A}), there are $z_{i_o}, w_{i_o}\in \d\Omega\setminus\ell$ such that the distances in  (\ref{A}) are attained there, i.e.
$$
\dist(a_{i_o},z_{i_o})+\dist(b_{i_o},w_{i_o}) =
\dist(a_i, f^{-1}(e_i)\cap(\d\Omega\setminus I_i)) + \dist(b_i, f^{-1}(e_i)\cap(\d\Omega\setminus I_i)) < |a_i-b_i|.
$$
We may also assume that
$$
\dist(b_{i_o},V) < \dist(a_{i_o},V)\quad\hbox{and} \quad
\dist(w_{i_o},V) < \dist(z_{i_o},V).
$$
We consider a triangle $T:= \bigtriangleup(V,b_{i_o},w_{i_o})$ and the following cases (i) none of $x_1$, $x_2$ belong to $T$, (ii) just one of $x_1$, $x_2$ belongs to $T$, (iii)  $x_1$ and $x_2$ belong to $T$.

It is obvious that (i) reduces to (b). Situation in (ii) may reduced to (b) and (iii) below by introducing an additional point $x_3$, the intersection of $[x_1, x_2]$ with 
$[b_{i_o},w_{i_o}]$.  Finally, we have to pay attention to (iii) when
positions of $y^{t_1}$, $y^{t_2}\in \ell_2$ are not restricted. In  this case, we proceed as in \cite{grs}. We notice that
$$
|x_1-x_2| \ge \dist(x_2, e_1) = \min\{ \dist(x^{t_2}, e_1), \dist(y^{t_2}, e_1)\}.
$$
In addition, if $\beta_i$ is the angle formed by $e_1$ with $\ell_i$, $i=1,2$, then we notice
$$
\sin\beta_1 = \frac{\dist(x^{t_2}, e_1)}{|x^{t_2}-x^{t_1}|},\qquad
\sin\beta_2 = \frac{\dist(y^{t_2}, e_1)}{|y^{t_2}-y^{t_1}|}.
$$
While estimating $\sin\beta_i$, $i=1,2$, we have to take into account that $\ell_1$ and $\ell_2$ form 
an angle $\gamma$. Thus,
$$
\sin\gamma = \frac{d^y}{|y^{t_2}-y^{t_1}|},
$$
if $|y^{t_2}-y^{t_1}|> |x^{t_2}-x^{t_1}|$ and
$$
\sin\gamma = \frac{d^x}{|x^{t_2}-x^{t_1}|},
$$
in the opposite case. In these formulas, $d^y$ (resp. $d^x$) denotes the length of the orthogonal projection of the line segment $[y^{t_2},y^{t_1}]$  (resp. $[x^{t_2},x^{t_1}]$) on the line perpendicular to $\ell_1$  (resp. $\ell_2$). 
Thus, we can estimate $\sin \beta_i$, $i=1,2$, below as follows,
$$
\sin \beta_1 \ge \frac{d^y}{\diam(\Omega)} = \frac{\sin\gamma |y^{t_2}-y^{t_1}|}{\diam(\Omega)},
\qquad
\sin \beta_2 \ge \frac{d^x}{\diam(\Omega)} = \frac{\sin\gamma |x^{t_2}-x^{t_1}|}{\diam(\Omega)}.
$$
As a result, 
$$
|x_1-x_2| \ge \frac{\sin\gamma}{\diam\Omega}|x^{t_2}-x^{t_1}| |y^{t_2}-y^{t_1}|.
$$
Hence,
$$
\sqrt{\frac{\diam\Omega}{\sin\gamma}} \sqrt{|x_1-x_2|} \ge 
\min \{|x^{t_2}-x^{t_1}|, |y^{t_2}-y^{t_1}|\}.
$$
Arguing as in parts (a) and (b) we come to the conclusion that
\begin{equation}\label{om_3}
 |v_n(x_1) - v_n(x_2)| \le \omega_f(A \sqrt{|x_1 -x_2|}), 
\end{equation}
where $A = \sqrt{\diam\Omega}/ \sqrt{\sin\gamma}$.

Subcase (d): 
$e_1$ and $e_2$, defined earlier, intersect $\ell_1$. In addition, there are two different flat parts $\ell_2$ and $\ell_3$ intersecting $e_1,$ $e_2$ i.e., $e_1 \cap\ell_2 \neq \emptyset$ 
and $e_2\cap \ell_3 \neq \emptyset$. We will reduce this situation to:\\
 (b) when $\ell_1 \cap (\ell_2 \cup \ell_3) = \emptyset$
or (c) $\ell_1 \cap (\ell_2 \cup \ell_3) = \{V\}$ or (d1) the intersection 
$\ell_1 \cap (\ell_2 \cup \ell_3)$ consists of two points.

The reduction is as follows. Let us suppose that $\ell_2 \cap \ell_3= \{P\}$.
We take $t_3$ such that $\d E^n_{t_3} \cap \d\Omega$ contains $P$. If there
is no such $t_3$, then we are in the situation of Case II considered below. 
We call by $e_3$ a component of $\d E^n_{t_3} $ containing $P$. Now, $e_3$
intersects segment $[x_1, x_2]$ at $x_3$ and
$\ell_1$ at $x^{t_3}$. Now, pairs $x_1$, $x_3$ and $x_3$, $x_2$ fall into the 
known category (b) or (c) or we have to proceed iteratively to reach them.

The iterative procedure, indicated above, is necessary when $\ell_2$ and $\ell_3$ are disjoint.  

Case (d1) can be reduced to the previous ones. Namely, If $\ell_2\cap \ell_3 =\{P\}$, then we consider the level set containing $P$. If there is $\tau$ such that $P\in \d E^n_\tau$, then we can  take $x_3\in \d E^n_\tau\cap [x_1,x_2]$ and in order to estimate $|v_n(x_1) - v_n(x_2)|$, we will use the triangle inequality
$$
|v_n(x_1) - v_n(x_2)| \le |v_n(x_1) - v_n(x_3)| + |v_n(x_3)- v_n(x_2)|
$$
and the observation that the points $x_1$, $y$ and $x_2$, $y$ belong to the categories we have already investigated.

Subcase (e): $e_1$ and $e_2$, defined earlier, intersect $\ell_1$ a flat part and 
$e_1$ and $e_2$ intersect $\cC$ a connected component of 
$\d\Omega \setminus \bigcup_{i=1}^K \ell_i$, and $\cC \cap \ell = \emptyset$.
In such a situation we proceed as in case (b), but instead of $\ell_2$, we 
consider all cords of arc $\cC$. We can estimate 
$|v_n(x_1) - v_n(x_2)|$ as in (\ref{om_1}).

We may assume that $\bar x^{t_1}, \bar y^{t_1} \in\ell $ and  $\bar x^{t_2}, \bar y^{t_2} 
\in \d\Omega \setminus \bigcup_{i=1}^K \ell_i$. We define $\tilde\ell_1 =[\bar x^{t_1},\bar y^{t_1}]$ and
$\tilde\ell_2 =[\bar x^{t_2},\bar y^{t_2}]$. Once we introduced $\tilde\ell_1$ and  $\tilde\ell_2,$ they will play the role of $\ell_1$ and  $\ell_2$. We recognize one of the subcases (a) to (e). We notice that the situation simplifies a bit since $\bar x^{t_2}, \bar y^{t_2}\in \Omega$.

Subcase (f) occurs when both $e_1$ and $e_2$  intersect $\d\Omega \setminus \bigcup_{i=1}^K \ell_i$. We proceed as in subcase (e) and we notice that $\bar x^{t_i}, \bar y^{t_i}\in \Omega$, $i=1,2$.

{\it Case II} occurs when $x_1$ belongs to $\d E^n_t$, while for no real $s$, point $x_2$ belongs to $\d E^n_s$. Since $v_n$ is continuous, thus $v_n(x_2)=\tau$ is well-defined. We take $x_3 \in \d E^n_\tau\cap [x_1,x_2]$. As a result, couples $x_1, x_3$ and $x_3, x_2$ fall into one of the investigated categories above. 

The final {\it Case III} is when neither $x_1$ nor $x_2$ belong to any $\d E^n_t$. Let us assume that $t_1>t_2$ (in case $t_1 =t_2$ there is nothing to prove). 
We take $x_3 \in [x_1,x_2]\cap \d
E^n_{t_1}$. Clearly, the present case reduces to the previous one, because $v_n(x_1) = v_n(x_3)$ and the couple $x_2$, $x_3$ belongs to the Case II.\qed

\begin{theorem}\label{main1} Let us suppose that $\Omega$ is convex
  and $f\in 
  C(\d\Omega)$. In addition, $\partial\Omega$ may have
  countably many flat parts $\{\ell_k\}_{k\in \cK}$. If this happens, then they are on one
  side of their single accumulation point $p_0$.
 If $f$ satisfies  the admissibility conditions \#1 or \#2 on each
 flat part $\ell$ of $\d\Omega$, then there is a continuous solution to the least gradient problem.
\end{theorem}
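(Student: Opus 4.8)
The plan is to obtain $u$ as a uniform limit of Sternberg--Williams--Ziemer solutions on strictly convex domains, along the lines sketched in the introduction, once the two sources of infinitude have been removed. First I would reduce to the \emph{core case} in which $\partial\Omega$ has finitely many flat parts and $f$ carries finitely many humps on each of them. If $\partial\Omega$ has infinitely many flat parts, Lemma \ref{l-inter} replaces $\Omega$ by convex sets $\tilde\Omega^k\supset\Omega$ with finitely many flat parts, $\overline{\tilde \Omega}^k\to\bar\Omega$ in the Hausdorff metric, together with admissible data $\tilde f_k$ satisfying $\|\tilde f_k\|_{C(\partial\tilde\Omega^k)}\le\|f\|_{C(\partial\Omega)}$, $\omega_{\tilde f_k}\le 4\omega_f$ and $\tilde f_k\circ(\tilde\pi^k)^{-1}\to f$ uniformly. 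Its standing hypothesis that a flat part ending at the accumulation point $p_0$ carries only finitely many humps is harmless here: by Proposition \ref{pr-2.1} an infinite family of humps on a flat part $\ell$ can accumulate only at an endpoint at which $\partial\Omega$ makes an acute angle, and such a configuration is eliminated beforehand by an analogous corner-cutting construction at that vertex (equivalently, by approximating $f$ on $\ell$ by data with finitely many humps and using that the modulus $\tilde\omega$ of Lemma \ref{le2} does not depend on the number of humps), again keeping $\|\cdot\|_\infty$, the modulus of continuity and the admissibility conditions under control. Iterating these two reductions, I may assume we are in the core case.

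In the core case I would follow the announced scheme. By Lemma \ref{prap} there are strictly convex bounded sets $\Omega_n\supset\Omega$ with $\bar\Omega_n\to\bar\Omega$ in the Hausdorff metric, and I define $f_n\in C(\partial\Omega_n)$ by (\ref{defn}); by Corollary \ref{c0} then $\omega_{f_n}=\omega_f$, $\|f_n\|_\infty=\|f\|_\infty$ and $f_n\circ\pi_n^{-1}\to f$ uniformly. By \cite[Theorems 3.6 and 3.7]{sternberg} each problem (\ref{lg}) on $\Omega_n$ has a unique solution $v_n\in C(\bar\Omega_n)$, and by Lemma \ref{le2} the restrictions $v_n|_{\bar\Omega}$ share a modulus of continuity $\tilde\omega$ depending only on $\omega_f$, on $\Omega$ and on the geometry of the data (not on $n$), with $\|v_n\|_\infty\le\|f\|_\infty$. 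Arzel\`a--Ascoli on the compact set $\bar\Omega$ yields a subsequence converging uniformly to some $u\in C(\bar\Omega)$; since the restriction of a function of least gradient to a subdomain is again one, each $v_n|_\Omega$ is of least gradient on $\Omega$, and the stability result of \cite{miranda} identifies $u$ as a function of least gradient. For the boundary values, given $x\in\partial\Omega$ choose $x_n\in\partial\Omega_n$ with $x_n\to x$; then $v_n(x_n)=f_n(x_n)\to f(x)$ by the uniform convergence $f_n\circ\pi_n^{-1}\to f$ and the continuity of $f$, while $|v_n(x_n)-u(x)|\le\tilde\omega(|x_n-x|)+\|v_n-u\|_{C(\bar\Omega)}\to0$, whence $u(x)=f(x)$; as $u$ is continuous up to the boundary, $Tu=f$. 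Finally, for minimality I take any $w\in BV(\Omega)$ with $Tw=f$, extend it to $w_n\in BV(\Omega_n)$ with boundary trace $f$ on $\partial\Omega$ (so no jump is created there) and $f_n$ on $\partial\Omega_n$ and with $\int_{\Omega_n\setminus\Omega}|Dw_n|\to0$ --- possible because $f$ is continuous, $f_n\circ\pi_n^{-1}\to f$ and $\Omega_n\setminus\Omega$ shrinks --- so that $\int_{\Omega_n}|Dv_n|\le\int_{\Omega_n}|Dw_n|=\int_\Omega|Dw|+o(1)$, and lower semicontinuity of the total variation under $L^1$ convergence gives $\int_\Omega|Du|\le\liminf_n\int_\Omega|Dv_n|\le\liminf_n\int_{\Omega_n}|Dv_n|\le\int_\Omega|Dw|$. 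Hence $u$ solves (\ref{lg}) in the core case.

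The return through the reductions is then routine: applying the core case to the domains $\tilde\Omega^k$ of the first paragraph gives continuous solutions $u_k$ to (\ref{lg}) there; restricted to $\Omega$ they are uniformly bounded by $\|f\|_\infty$ and, by Lemma \ref{le2} applied on $\tilde\Omega^k$, equicontinuous with a common modulus, after which Arzel\`a--Ascoli, the stability result of \cite{miranda}, and the trace argument above (now with $\tilde f_k\circ(\tilde\pi^k)^{-1}\to f$ in place of $f_n\circ\pi_n^{-1}\to f$) produce a continuous solution on $\Omega$; a diagonal extraction closes the argument if both reductions were used. The step I expect to be the main obstacle is securing the \emph{uniformity of the equicontinuity modulus across the whole approximating family}, both the strictly convex $\Omega_n$ and the polygonal $\tilde\Omega^k$: the constants $A$ and $B$ of Lemma \ref{le2} are assembled from the angles at the junctions of flat parts and from the slack in the admissibility inequality (\ref{A}), and one must check that none of these degenerates as $\tilde\Omega^k\to\Omega$, so that $\tilde\omega$ can be chosen once and for all. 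A close second is the companion corner-cutting construction removing an infinite family of humps accumulating at an acute vertex, which needs its own bookkeeping of admissibility and moduli of continuity parallel to Lemma \ref{l-inter}.
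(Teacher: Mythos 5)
Your proposal follows essentially the same route as the paper's proof: the core case is handled exactly as in the paper's Step 1 (strictly convex approximations $\Omega_n$ from Lemma \ref{prap}, data $f_n$ via (\ref{defn}), Sternberg--Williams--Ziemer solutions, the common modulus $\tilde\omega$ from Lemma \ref{le2}, Arzel\`a--Ascoli, Miranda's stability and the trace identification), and your two reductions coincide with the paper's Steps 2 and 3 (approximating $f$ on a flat part by data with finitely many humps, and passing through the intermediate domains $\tilde\Omega^k$ of Lemma \ref{l-inter}). The only cosmetic differences are the order in which the reductions are presented and your explicit competitor-extension argument for minimality, where the paper instead invokes the least-gradient property together with the correct trace.
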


\begin{remark} We assume that the flat parts are on one side of $p_0$
  just for the sake of convenience. With the same tools, we can handle
  also a finite number of accumulation points.
\end{remark}
\begin{proof} {\it Step 1.} We assume initially that $\Omega$ has a
  finite number of flat parts and we have a  finite number of humps.
We use Lemma \ref{prap} to find a sequence 
of  strictly convex regions,  $\Omega_n$, approximating  $\Omega$. The continuity modulus of 
the boundary function $f$ is denoted by $\omega_f$. 
We notice that all $f_n$ have continuity modulus $\omega_f$. By
Corollary \ref{c1} functions $f_n$ satisfy the admissibility conditions.

We notice that by classical result, \cite{sternberg}, there exists a
unique solution, $v_n$ to the least gradient problem (\ref{lg}) on
$\Omega_n$ with data $f_n$. 

By the maximum principle, see \cite{sternberg}, sequence $v_n$ is
uniformly bounded and one can show 
$\int_{\Omega_n} |Dv_n|\le M<\infty.$ 
Now, we set,
$$
u_n = \chi_\Omega v_n.
$$
From \cite[Proposition 4.1]{grs} we know that $u_n$ are least gradient functions.

Since functions $u_n$ are uniformly bounded and due to Lemma \ref{le2} they have the common
continuity modulus $\tilde\omega$, there is a subsequence (not
relabeled) uniformly converging to $u$. The uniform convergence
implies convergence of traces, i.e. $Tu_n$ goes to $Tu$. Since $Tu_n$
tends to $f$, we shall see that $Tu = f$. Indeed, if $x\in \d\Omega$, then
\begin{eqnarray*}
 |u_n(x) - f(x)| & \le & | v_n(x) - v_n \circ (\pi_n)^{-1}(x) | + 
 | v_n \circ (\pi_n)^{-1}(x)- f(x)| \\
 &=& | v_n(x) - v_n \circ (\pi_n)^{-1}(x) | +
 |f_n((\pi_n)^{-1}(x))- f(x)|.
\end{eqnarray*}
Since $(\pi_n)^{-1}(x)$ goes to $x$ as $n\to\infty$ and we can use the last part of Corollary \ref{c0}, we conclude that the right-hand-side above converges to zero, so $Tu = f$.

Moreover, the uniform convergence of $u_n$ implies the convergence of
this sequence to $u$ in $L^1$. Hence, by classical results,
\cite{miranda}, we deduce that $u$ is a least gradient function. Since
it satisfies the boundary data, we deduce that $u$ is a solution to the
least gradient problem. Moreover, the modulus of continuity of $u$ is $\tilde\omega$.

{\it Step 2.} 
Now, we relax the assumption on $f\in C(\d\Omega)$ and we admit it has an infinite number of humps. We denote its continuity
modulus by $\omega_f$.

We assume that only one flat side $\ell$  has infinitely many humps. We do this for the sake of simplicity of the argument. We will find functions  $g_n\in C(\d\Omega)$, which converge uniformly to $f$ and each of them satisfies the admissibility conditions $\#1, 2$ and  has finitely many humps.

We claim that due to the admissibility  conditions $\#1$ and $\#2$ the humps $\{I_i\}_{i\cI}$ may accumulate only at the endpoints of $\ell$. Let us suppose the contrary and $a$ is an accumulation point of $I_l = [a_l, b_l]$, $l \to \infty$, i.e. $|b_l - a_l|\to 0$ and $a_l \to a$. Since $a$ is not any endpoint of $\ell$, then $\dist(a,\d\Omega\setminus\ell)>0$, but this violates the admissibility condition \# 2.

Let $a$ be an endpoint of $\ell$ which is an accumulation point of $\{I_i\}_{i\cI}$ (if the humps accumulate also at the other endpoint, then we proceed similarly).


Let $b^n$ be  the point on $\ell$ such that $|a-b^n|<1/n$.
We define the sequence $g_n: \d\Omega\to \bR$ as follows, 
we set
$$
g_n(x)=\left\{
\begin{array}{ll}
 f(x) & x \in \d\Omega\setminus[a,b^n],\\
 \max\{ f(a), f(b^n)-\omega_f(|x - b^n|)\}& x \in [a,b^n]\quad\hbox{and }f(b^n) > f(a),\\
 \min\{ f(a), f(b^n)+\omega_f(|x - b^n|)\}& x \in [a,b^n]\quad\hbox{and }f(b^n) \le f(a)
\end{array}
\right.
$$
Of course,  $g_n$ are continuous and they converge uniformly to $f$ on $\p \Omega$. We may easily estimate the continuity modulus of $g_n$  by $2\omega_f$.

Let $u_n$ be the solution to the least gradient problem corresponding to $f_n$, then due to Lemma \ref{le2} $u_n$ is continuous up the boundary uniformly bounded and with modulus of continuity equal to $2\tilde\omega$. Hence, by Arzela-Ascoli Theorem, $u_n$ converges uniformly to a function $u$ which, by \cite{miranda}, is a least gradient function. 
The uniform convergence
implies convergence of traces, i.e. $Tu_n$ goes to $Tu$. Moreover, we
can check that $Tu = f$ exactly as in Step 1. In the proof carried out
there we use  Corollary \ref{c0}, now   in its place, we refer to 
the last part of Lemma \ref{l-inter}.


{\it Step 3.} We assume  we have an  infinite number of flat
parts. If this happens, we use an intermediate step of approximation. We do not
make any assumption on the  number of humps. Due to Lemma
\ref{l-inter} there is a sequence $\tilde \Omega_k$ of convex bounded
sets with a finite number of flat parts and such that $\overline{\tilde
\Omega}_k$ converges to $\bar \Omega$ in the Hausdorff
metric. Moreover, we have  $\{\tilde f_k\}_{k=1}^\infty$ which is a sequence of data satisfying the
admissibility conditions on $\partial \tilde \Omega_k$.The
moduli of continuity of $\tilde f_k$ are commonly  bounded by
$4 \omega_f$.

Due to Step 2, there exists a sequence $\{\tilde u_k\}_{k=1}^\infty$ of
solutions to the least gradient problems in  $\tilde \Omega_k$ with
the common bound on the modulus of continuity, due to Lemma
\ref{le2}. Thus, the sequence $\{\tilde u_k\chi_\Omega\}_{k=1}^\infty$
is bounded in $C(\Omega)$ with a common modulus of continuity. Hence,
we can extract a convergent subsequence. We will call the limit by
$u$. Arguing as in Step 1. we deduce that $u$ is a least gradient
function and it has the correct trace.
\end{proof}

Once we proved existence, we address the problem of uniqueness of solutions. In \cite{gorny2}, the author studied the problem of uniqueness of solutions to the least gradient problem understood in the trace sense, i.e. as here. The cases of non-uniqueness are classified there and related to the possibility of different partition of `fat level sets', i.e. level sets with positive Lebesgue measure, and with the possibility of assigning different values there. In case of continuous data and solutions, we do not have any freedom to choose values of solutions on fat level sets. Thus, \cite[Theorem]{gorny2} implies the following statement. 

\begin{cor}
 Solutions constructed in Theorem \ref{main1} are unique.
\end{cor}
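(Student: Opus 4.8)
The plan is to invoke the classification of non-uniqueness of least gradient solutions from \cite{gorny2} and observe that, in the presence of continuity, the only mechanism producing multiple solutions is eliminated. First I would recall the precise statement of \cite[Theorem]{gorny2}: two solutions of the least gradient problem with the same trace can differ only on \emph{fat level sets}, i.e.\ connected components of $\{x\in\Omega:\ u_1(x)\neq u_2(x)\}$ that have positive Lebesgue measure, and on each such component one is free to reassign the value within the gap between the boundary values dictated by the neighbouring level sets. Away from fat level sets the solution is uniquely determined by its trace.

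Next I would argue that a continuous least gradient function cannot possess a fat level set on which its value may be changed while remaining a competitor. Indeed, suppose $u$ is the continuous solution produced by Theorem~\ref{main1} and $F\subset\Omega$ is a connected component of positive measure on which $u$ could be replaced by a different constant (or different function) to obtain another solution $u'$ with the same trace. By the structure theory of least gradient functions (boundaries of superlevel sets are line segments, cf.\ \cite{sternberg}), $F$ is a region bounded by such segments, hence $u$ is constant on $F$, say $u\equiv t$ on $F$, and the segments $\partial\{u\ge s\}$ for $s$ slightly below and slightly above $t$ both meet $\partial\Omega$. Continuity of $u$ up to the boundary then forces the boundary traces along those two segments to coincide with $t$ in the limit, so the ``gap'' in which one is allowed to reassign values degenerates to the single value $t$; there is no freedom, and $u'=u$.

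Then I would assemble the conclusion: every solution to \eqref{lg} with trace $f$ agrees with $u$ off fat level sets by \cite[Theorem]{gorny2}, and the previous paragraph shows there is no admissible reassignment on any fat level set when the competing solutions are required to have the prescribed continuous trace; hence any solution equals $u$, giving uniqueness. Since the construction in Theorem~\ref{main1} already yields a continuous $u$, and the classification applies verbatim to merely convex as well as strictly convex $\Omega$ (as emphasised in the introduction), the statement follows.

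The main obstacle I anticipate is making rigorous the claim that continuity of $u$ pins down the value on a fat level set, i.e.\ ruling out the reassignment freedom allowed by \cite{gorny2}. One must check that the two bounding segments of a putative fat level set really do reach $\partial\Omega$ and that the trace of $u$ there equals the common constant value, so that any other solution sharing the trace $f$ is forced to take that same value on $F$; this is where the continuity hypothesis is genuinely used, and it should be stated as the key point of the argument rather than glossed over.
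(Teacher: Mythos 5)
Your proposal matches the paper's argument: the authors likewise invoke the classification of non-uniqueness from \cite{gorny2} via fat level sets and observe that continuity of the data and of the constructed solution removes any freedom to reassign values on such sets. Your additional elaboration on why continuity pins down the value on a fat level set is a reasonable fleshing-out of the same idea, which the paper states more briefly.
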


Now, we are ready to deal with discontinuous data.


\section{The case of discontinuous data}\label{sDdata}

In this section, we relax the continuity condition  and study \eqref{lg} when $f \in BV$. In this case $f$ might have at most  countably many jump discontinuity points. 
The technique we use here permits us to consider $f$ with countably many discontinuity points. 
We assume that $f$ satisfies the admissibility condition given in Definition \ref{admDC} and \ref{admDC2}.  We stress that they slightly different from  \ref{admC1} and \ref{admC2}.

We start by showing the following results that will be needed later in the construction of the solution. In fact, this a result borrowed from \cite{NR}.

\begin{lemma}\label{Aprox1}
Let $f$ be a monotone function in  $\mathbb R$, then there exist two sequences of continuous functions $g_n$ and $h_n$, such that:
\begin{enumerate}
\item $g_n$ and $h_n$ are monotone with the same monotonicity as $f$ and $g_n(x)\le f(x) \le h_n(x)$;
\item $\left\{g_n(x)\right\}$ is an increasing sequence, and $\left\{h_n(x)\right\}$ is a decreasing sequence;
\item $g_n$ and $h_n$ converge to $f$ at continuity points of $f.$
\end{enumerate}
\end{lemma}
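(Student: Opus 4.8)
The plan is as follows. Since applying the statement to $-f$ (and interchanging the roles of the two families) handles the non-increasing case, I would assume throughout that $f$ is non-decreasing. Moreover, composing with a fixed increasing homeomorphism $\phi\colon\bR\to(-1,1)$, e.g. $\phi(s)=s/(1+|s|)$, preserves monotonicity, continuity, the pointwise order and pointwise limits, and does not change the set of continuity points; so it suffices to build the two sequences for the bounded function $\phi\circ f$ and then set $g_n=\phi^{-1}\circ G_n$, $h_n=\phi^{-1}\circ H_n$. Hence I may also assume that $f$ is bounded, say with $m:=\inf_\bR f$ and $M:=\sup_\bR f$ finite. This reduction is the point where one must be slightly careful: one has to check that the constructed $H_n,G_n$ stay strictly inside $(-1,1)$, so that $\phi^{-1}$ may be applied; this holds because a near-maximising sequence in the construction below would otherwise have to escape to infinity.

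For bounded non-decreasing $f$ I would take the Moreau--Yosida type sup- and inf-convolutions
$$
h_n(x)=\sup_{t\in\bR}\bigl(f(t)-n|x-t|\bigr),\qquad g_n(x)=\inf_{t\in\bR}\bigl(f(t)+n|x-t|\bigr),\qquad n\in\bN.
$$
These are finite because $f$ is bounded, and being a supremum (resp.\ an infimum) of $n$-Lipschitz functions which is finite everywhere, each $h_n$ and $g_n$ is $n$-Lipschitz, hence continuous. Choosing $t=x$ in the two formulas gives at once $g_n\le f\le h_n$. Increasing $n$ enlarges the penalty $n|x-t|$, so $h_{n+1}\le h_n$ and $g_{n+1}\ge g_n$ pointwise, which is the required monotonicity in $n$.

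It then remains to check monotonicity in $x$ and convergence at continuity points. For monotonicity I would fix $x<x'$ and split the supremum defining $h_n(x)$ according to whether the running point $t$ lies in $(-\infty,x]$, in $[x,x']$, or in $[x',\infty)$; using that $f$ is non-decreasing, in each of the three ranges the corresponding term $f(t)-n|x-t|$ is bounded above by $h_n(x')$ (respectively by $f(x)\le f(x')\le h_n(x')$, by $f(t)\le f(x')\le h_n(x')$, and by $f(t)-n(t-x')\le h_n(x')$ since $t-x\ge t-x'$), whence $h_n(x)\le h_n(x')$; the argument for $g_n$ is dual. This short case analysis is the one genuinely non-automatic step, and it is where the hypothesis that $f$ is monotone enters; I regard it as the (mild) crux. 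For convergence at a continuity point $x_0$: given $\ep>0$ pick $\delta>0$ with $|f(t)-f(x_0)|<\ep$ for $|t-x_0|<\delta$; splitting the supremum defining $h_n(x_0)$ into $|t-x_0|<\delta$ and $|t-x_0|\ge\delta$, the first part is $\le f(x_0)+\ep$ and the second is $\le M-n\delta\le f(x_0)+\ep$ once $n\ge(M-f(x_0)+\ep)/\delta$, so $f(x_0)\le h_n(x_0)\le f(x_0)+\ep$ for $n$ large; the estimate for $g_n$ is symmetric. Undoing the two reductions of the first paragraph then gives the general statement. The only real care needed is in the monotonicity-in-$x$ verification and in the boundedness reduction; there is no analytic obstacle.
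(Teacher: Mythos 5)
Your proof is correct, but it follows a genuinely different route from the paper. The paper simply mollifies and shifts: it sets $g_n(x)=f\ast\varphi_{1/n}(x-\tfrac1n)$ and $h_n(x)=f\ast\varphi_{1/n}(x+\tfrac1n)$, gets $g_n\le f\le h_n$ and monotonicity in $x$ directly from monotonicity of $f$ and positivity of the kernel, and obtains monotonicity in $n$ by a change of variables; since a monotone function is locally bounded, no integrability or boundedness issue arises, so no preliminary reduction is needed. You instead use the Lipschitz (Pasch--Hausdorff/Moreau--Yosida) envelopes $h_n(x)=\sup_t\bigl(f(t)-n|x-t|\bigr)$, $g_n(x)=\inf_t\bigl(f(t)+n|x-t|\bigr)$, which buy you the order $g_n\le f\le h_n$ and the monotonicity in $n$ essentially for free, at the price of a reduction to bounded $f$ (for unbounded monotone $f$ the sup-convolution can be identically $+\infty$), which you correctly handle by conjugating with an increasing homeomorphism of $\bR$ onto $(-1,1)$; your check that the envelopes of $\phi\circ f$ stay strictly inside $(-1,1)$ is the right point to flag, and your case analysis for monotonicity in $x$ and the $\delta$-splitting for convergence at continuity points are sound. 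One could even note that your construction yields $n$-Lipschitz approximants, slightly more than the continuity the lemma asks for, just as the paper's mollified approximants are smooth; both arguments deliver exactly the three listed properties.
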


\begin{proof}
It is enough to show 1.--3. for  $f$ increasing.
Let $\varphi_{1/n}$ be the standard approximation to the identity with support in $[-\frac 1n, \frac 1n]$. We consider the mollified sequence 
$$g_n(x)= f\ast \varphi_{1/n}(x-\frac{1}{n})=\int_{\mathbb R} f(x-\frac{1}{n}-\xi)\varphi_{1/n}(\xi)\,d\xi=\int_{-1/n}^{1/n}f(x-\frac{1}{n}-\xi)\varphi_{1/n}(\xi)\,d\xi.$$
Functions $g_n$ are continuous and the sequence converges pointwise to $f$ at continuity points.
Since $\varphi_1 \ge 0$ and $f$ is increasing then $g_n$ is an increasing function for every $n\in \mathbb N$.
It remains to show that for every $x$ the sequence $\{g_n(x)\}$ is increasing. In fact using the change of variable $z=ny$, we get 
\begin{eqnarray*}
 g_n(x)&=&\int_{\mathbb R} f\left(x-\frac{1}{n}z\right)\varphi_1(z)dz=\int_{-1}^{1}f\left(x-\frac{1}{n}(1+z)\right)\varphi_1(z)\,dz\\
 &\leq& \int_{-1}^1 f\left(x-\frac{1}{n+1}(1+z)\right)\varphi_1(z)\,dz=g_{n+1}(x).
\end{eqnarray*}
Letting $h_n(x)= f\ast \varphi_{1/n}(x+\frac{1}{n})$, we can prove similarly 1.--3.
\end{proof}

\begin{cor}\label{Aprox2}
Let $f\in BV$, then there exist two sequences of continuous functions $g_n$ and $h_n$, such that
\begin{enumerate}
\item $\left\{g_n(x)\right\}$ is an increasing sequence, and $\left\{h_n(x)\right\}$ is a decreasing sequence.
\item $g_n$ and $h_n$ converge to $f$ at continuity points of $f$.
\end{enumerate}
\end{cor}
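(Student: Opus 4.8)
The plan is to reduce everything to the monotone situation already settled in Lemma \ref{Aprox1} via the Jordan decomposition of a function of bounded variation. Let $V_f$ denote the variation function of $f$, that is, $V_f(x)$ is the total variation of $f$ on $(-\infty,x]$; if $f$ is a priori defined only on an interval or on $\partial\Omega$, extend it to all of $\bR$ by constant values, which does not create new discontinuities. It is classical that $V_f$ is non-decreasing, that
\begin{equation*}
 f_1 := \tfrac12\,(V_f + f),\qquad f_2 := \tfrac12\,(V_f - f)
\end{equation*}
are both non-decreasing with $f = f_1 - f_2$, and --- the point on which the argument turns --- that $V_f$ is continuous at a point $x$ whenever $f$ is. Consequently every continuity point of $f$ is a continuity point of $f_1$ and of $f_2$.

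First I would apply Lemma \ref{Aprox1} to each of $f_1$ and $f_2$. This produces continuous non-decreasing functions $g^1_n \le f_1 \le h^1_n$ and $g^2_n \le f_2 \le h^2_n$ such that, for every $x$, the sequence $\{g^i_n(x)\}_n$ is increasing, $\{h^i_n(x)\}_n$ is decreasing, and $g^i_n(x)\to f_i(x)$, $h^i_n(x)\to f_i(x)$ at each continuity point of $f_i$, for $i=1,2$. I then set
\begin{equation*}
 g_n := g^1_n - h^2_n,\qquad h_n := h^1_n - g^2_n .
\end{equation*}
Each $g_n$ and each $h_n$ is continuous as a difference of continuous functions. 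For fixed $x$ the sequence $\{g_n(x)\}_n$ is increasing, because $\{g^1_n(x)\}_n$ increases while $\{h^2_n(x)\}_n$ decreases; symmetrically $\{h_n(x)\}_n$ is decreasing, which is the first claim. For the second claim, if $x$ is a continuity point of $f$ then it is a continuity point of $f_1$ and of $f_2$, whence $g_n(x)\to f_1(x)-f_2(x)=f(x)$ and likewise $h_n(x)\to f(x)$. One moreover gets the bonus inequalities $g_n\le f\le h_n$ everywhere, matching the earlier lemma.

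The single genuine obstacle is the assertion that $f_1$ and $f_2$ inherit the continuity points of $f$: without it the pointwise limits of $g_n$ and $h_n$ could fail to recombine into $f$ even where $f$ is continuous. This rests on the standard fact that the variation function $V_f$ of a $BV$ function has precisely the same discontinuity set as $f$ (equivalently, a $BV$ function is a difference of two non-decreasing functions whose jump sets are contained in that of $f$); I would invoke this directly. Everything else --- the continuity and monotonicity bookkeeping, and the harmless extension of $f$ to $\bR$ so that the mollifications in Lemma \ref{Aprox1} make sense --- is routine.
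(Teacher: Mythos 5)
Your proof is correct and follows essentially the same route as the paper: Jordan decomposition of $f$ into a difference of monotone functions, followed by an application of Lemma \ref{Aprox1} to each piece. You in fact supply details the paper leaves implicit, namely the cross-combination $g_n = g^1_n - h^2_n$, $h_n = h^1_n - g^2_n$ needed to preserve monotonicity in $n$, and the fact that the variation function shares the continuity points of $f$, which is exactly what makes the recombination converge to $f$ at those points.
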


\begin{proof}
Since $f$ is in $BV$, we can write 
$f=f^+ - f^-$, with $f^+, f^-$ increasing functions. Using Lemma \ref{Aprox1}, we conclude the proof of the corollary.
\end{proof}

\subsection{Data with a finite number of humps}
We treat separately the cases of finite and infinite number of jumps. This necessity is apparent to distinguish the cases, when we approximate the data. Our point is
to  make sure that the approximation process in Lemma \ref{Aprox1} preserves the main feature of the data. We have:
\begin{lemma}\label{gnadm2}
 Let us suppose that $f\in BV(\d\Omega)$ satisfies admissibility condition \#2. Then, the approximating sequences $g_n$ and $h_n$ satisfy  admissibility condition \#2 for continuous functions.
\end{lemma}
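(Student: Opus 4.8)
The plan is to show that the key quantities appearing in the admissibility condition \#2 for $f$ are stable under the mollification performed in Lemma~\ref{Aprox1}. Fix a flat part $\ell$ and recall that, since $f$ has a finite number of humps on $\ell$, Proposition~\ref{p-2.2} and the fact that $\Omega$ is convex allow us to list the humps $I_i=[a_i,b_i]$, $i=1,\dots,N$, together with the points $y_i, z_i\in\d\Omega\setminus\ell$ realizing the distances in \eqref{DA}. What I would prove is: for each hump $I_i$ of $f$ there is a corresponding hump $I_i^{(n)}=[a_i^{(n)},b_i^{(n)}]$ of $g_n$ (and of $h_n$) with $e_i^{(n)}:=g_n(I_i^{(n)\,o})$, such that $a_i^{(n)}\to a_i$, $b_i^{(n)}\to b_i$ as $n\to\infty$, that the level set $g_n^{-1}(e_i^{(n)})$ does not come closer to $a_i^{(n)}$ (resp.\ $b_i^{(n)}$) from outside $I_i^{(n)}$ than $f^{-1}(e_i)$ comes to $a_i$ (resp.\ $b_i$), up to an error tending to $0$, and that the realizing points still lie in $\d\Omega\setminus\ell$. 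Granting these three facts, the strict inequality \eqref{DA} for $f$ passes to $g_n$ and $h_n$ for $n$ large, because there are only finitely many humps so the errors can be made uniformly small.

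The first step is the identification of humps. Since $g_n$ is obtained by convolving with a nonnegative kernel supported in $[-\tfrac1n,\tfrac1n]$ and composing each monotone piece $f^\pm$ this way, $g_n$ is constant on exactly those subintervals of $\ell$ on which $f$ is constant and which lie at distance $>\tfrac1n$ from any point of variation of $f$; in particular the plateau of $f$ on $I_i$ at height $e_i$ shrinks to $[a_i+\tfrac1n, b_i-\tfrac1n]$ (shifted by the $\pm\tfrac1n$ translation in the definition of $g_n$, $h_n$), which is still a local max or min of $g_n$ provided $|I_i|>\tfrac2n$, hence a hump of $g_n$. This is where the finiteness of the number of humps matters: it gives a uniform lower bound $\min_i|I_i|>0$, so all $I_i^{(n)}$ are genuine humps once $n$ is large, and $e_i^{(n)}\to e_i$. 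One must also check that no \emph{new} humps are created that would fail \eqref{DA}; but a new plateau of $g_n$ can only appear where $f$ already had a plateau (monotone data has no local extrema off its plateaus), so every hump of $g_n$ on $\ell$ is one of the $I_i^{(n)}$.

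The second step is the distance estimate. Because $g_n$ is monotone with the same monotonicity as $f$ on each monotone piece, the value $e_i^{(n)}$ is attained by $g_n$ on $\d\Omega\setminus I_i^{(n)}$ only near points where $f$ attains values close to $e_i$, more precisely within $O(1/n)$ of the set $f^{-1}(e_i)\cap(\d\Omega\setminus I_i)$; hence
$$
\dist\bigl(a_i^{(n)}, g_n^{-1}(e_i^{(n)})\cap(\d\Omega\setminus I_i^{(n)})\bigr)\ge \dist\bigl(a_i, f^{-1}(e_i)\cap(\d\Omega\setminus I_i)\bigr)-\tfrac{C}{n},
$$
and symmetrically for $b_i^{(n)}$. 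Adding these and using $|a_i^{(n)}-b_i^{(n)}|\ge|a_i-b_i|-\tfrac2n$ together with the strict inequality \eqref{DA} for $f$, we obtain \eqref{DA} for $g_n$ once $n>n_0(i)$; taking $n_0=\max_i n_0(i)$ (a finite max) handles all humps at once. The realizing points for $g_n$ lie within $O(1/n)$ of $y_i$ or $z_i\in\d\Omega\setminus\ell$, and since $\ell$ is closed and $\dist(y_i,\ell),\dist(z_i,\ell)>0$, they too lie in $\d\Omega\setminus\ell$ for $n$ large, giving the second clause of Definition~\ref{admDC2}. The same argument verbatim applies to $h_n$.

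The step I expect to be the main obstacle is the hump-identification one: making precise that convolution with $\varphi_{1/n}$ does not spawn spurious local extrema and does not destroy the max/min character of the plateaus, and controlling how the endpoints $a_i^{(n)}, b_i^{(n)}$ move. The monotone decomposition $f=f^+-f^-$ is convenient for the pointwise monotone estimates of Lemma~\ref{Aprox1}, but $g_n$ is \emph{not} the mollification of $f$ restricted to $\ell$ in an order-preserving way unless one is careful near the jump points of $f^\pm$ that happen to cancel in $f$; so I would instead argue directly with the monotone pieces of $f$ itself on $\ell$ (of which there are finitely many, since there are finitely many humps), mollify each, and glue — the gluing is where one must check no new hump appears at a junction. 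Once that local structural lemma is in hand, the rest is the elementary limiting argument sketched above.
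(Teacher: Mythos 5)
Your overall strategy coincides with the paper's: track how the humps, their endpoints, and the realizing points $y_i,z_i$ move under the monotone approximation of Lemma \ref{Aprox1}, and then pass the strict inequality \eqref{DA} to $g_n$ and $h_n$ for $n$ large, using finiteness of the number of humps to take a uniform $n_0$. You are in fact considerably more careful than the paper, whose proof only records that $g_n^{-1}(e_i)\cap I_i=[\alpha^n_i,\beta^n_i]$ is a shrinking subinterval of $I_i$ with $\alpha^n_i\to a_i$, $\beta^n_i\to b_i$, and is silent about spurious new humps, about the external part of the level set, and about the second clause of Definition \ref{admDC2}.

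There is, however, one step that as written does not deliver the conclusion. To deduce \eqref{DA} for $g_n$ you need \emph{upper} bounds on the two distances, namely
$\dist(a_i^{(n)},\, g_n^{-1}(e_i^{(n)})\cap(\d\Omega\setminus I_i^{(n)}))\le \dist(a_i,\, f^{-1}(e_i)\cap(\d\Omega\setminus I_i))+o(1)$
(and the analogue at $b_i$), together with the lower bound on $|a_i^{(n)}-b_i^{(n)}|$. Your displayed inequality is the \emph{lower} bound ``$\ge\cdots-C/n$'', and adding two lower bounds on the left-hand side of \eqref{DA} cannot yield the strict inequality; likewise, your structural claim that the $g_n$-level set lies in an $O(1/n)$-neighbourhood of $f^{-1}(e_i)$ is a containment in the unhelpful direction. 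What is actually needed is \emph{non-emptiness}: $g_n$ must attain the value $e_i^{(n)}$ at some point of $\d\Omega\setminus\ell$ close to $y_i$ (resp.\ $z_i$). This follows from continuity of $g_n$, pointwise convergence at continuity points, and the intermediate value theorem whenever $f$ genuinely crosses the level $e_i$ near $y_i$; it needs separate care when $y_i$ lies on another plateau of $f$ at the same height $e_i$, since $g_n\le f$ need not take the value $e_i^{(n)}$ on the corresponding plateau of $g_n$. The paper glosses over this point as well; once it is supplied, the rest of your argument (identification of humps, uniform $n_0$, realizing points staying in $\d\Omega\setminus\ell$) goes through.
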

\begin{proof}
We defined humps so that $\bar I_i =[a_i, b_i]$ and $f(a_i, b_i) = e_i$.
By Lemma \ref{Aprox1} $g_n(a_i)<f(a_i)$, hence 
$[\alpha^n_i, \beta^n_i] = g^{-1}(e_i) \cap I_i \subsetneq f^{-1}(e_i) \cap I_i$ but continuity of $f$ and $g_n$ on $I_i$ implies that 
$$
\alpha^n_i \to a_i,\qquad \beta^n_i \to b_i.
$$
Thus, (\ref{DA}) holds for sufficiently large $n$.
\end{proof}

The admissibility condition \#1 is a bit more difficult to handle. However, we prove the following observation.
\begin{lemma}\label{gnadm1}
 Let us suppose that $f\in BV(\d\Omega)$ satisfies admissibility condition \#1. Then, the approximating sequences $g_n$ and $h_n$ can be modified to satisfy  admissibility condition \#1 for continuous functions.
\end{lemma}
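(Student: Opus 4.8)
The plan is to handle the obstruction caused by discontinuities at the endpoints of $\ell$. Recall from Definition \ref{admDC} that admissibility condition \#1 for a monotone discontinuous $f$ on $\ell = [p_l, p_r]$ requires one of three alternatives: (i) $f$ is continuous at both endpoints; (ii) $f$ restricted to $\ell\cup (B(x_0,\ep)\cap\d\Omega)$ is monotone, where $x_0$ is the single endpoint of discontinuity and $f$ is continuous at the other; or (iii) $f$ restricted to $\ell \cup (\d\Omega\cap (B(p_l,\ep)\cup B(p_r,\ep)))$ is monotone. The difficulty is that the mollification procedure in Lemma \ref{Aprox1}, applied naively, only guarantees pointwise convergence at continuity points; at a jump endpoint, the averaged value $g_n$ (resp. $h_n$) may overshoot or undershoot in a way that destroys the monotonicity of $g_n$ across the junction of $\ell$ with the adjacent arc, i.e. it may create a spurious strict local extremum, hence a spurious hump straddling the endpoint.

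The key idea is that in each of the cases (i)--(iii) the monotonicity of $f$ extends to a neighborhood of $\ell$ in $\d\Omega$ (in case (i), monotonicity of $f|_\ell$ together with the one-sided limits and continuity at the endpoints already gives, after a trivial adjustment, monotonicity on a slightly larger arc; in (ii) and (iii) it is assumed outright). So I would first reduce to the following statement: if $f$ is monotone on a relatively open arc $J\subset\d\Omega$ containing $\ell$, then $g_n$ and $h_n$ from Lemma \ref{Aprox1} can be modified on $J$ so that they are \emph{monotone on all of} $\bar J$ (with the same monotonicity as $f$), while retaining properties 1.--3. of Corollary \ref{Aprox2}. Granting this, on $\ell$ itself the modified $g_n$, $h_n$ are monotone with the same monotonicity as $f$, so $\ell$ carries no hump of $g_n$ or $h_n$ at all, and hence admissibility condition \#1 for continuous functions (Definition \ref{admC1}) holds vacuously on $\ell$; cases (ii)--(iii) are subsumed because the extended monotonicity arc $J$ is exactly $\ell$ together with the small balls around the relevant endpoints.

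To prove the reduction, I parametrize $J$ by arclength as an interval $(c,d)\subset\bR$ and extend $f$ to all of $\bR$ as a bounded monotone function (say by constants outside, which preserves monotonicity). Then Lemma \ref{Aprox1} directly produces monotone increasing (resp. decreasing) continuous $g_n\le f\le h_n$ on $\bR$ that are monotone increasing (resp. decreasing) in $n$ and converge at continuity points. Restricting back to $J$ is harmless: on the rest of $\d\Omega$ I keep the original mollified $g_n,h_n$, and to glue the two definitions along the two endpoints of $J$ I interpolate on a shrinking collar $J_n\subset J$ of length $\to 0$ between the $J$-version and the $\d\Omega$-version using the continuity modulus $\omega_f$, exactly as in the gluing construction in the proof of Lemma \ref{l-inter}; since $f$ is monotone on a neighborhood of $\bar J$ this interpolation can be done monotonically near the endpoints. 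The collar shrinks, so pointwise convergence at continuity points is preserved, and monotonicity in $n$ is preserved by choosing the collar nested.

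The main obstacle is the bookkeeping at the junction points: one must check that after the gluing, $g_n$ (resp. $h_n$) does not develop a strict local maximum or minimum straddling an endpoint of $\ell$, which would reintroduce a hump violating the admissibility condition. This is exactly why the \emph{one-sided} monotonicity hypotheses (ii) and (iii) are needed — they guarantee the gluing can be performed within a region where $f$ (hence its mollification) is already monotone, so no new extremum is created; in case (i) the two one-sided limits at each endpoint are finite but need not match the arc value, and here one uses that one may shift the mollification slightly (as in the $g_n(x)=f*\varphi_{1/n}(x-1/n)$ trick already in Lemma \ref{Aprox1}) so that the approximant stays on the correct side and the junction stays monotone. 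The remaining verifications — that $g_n,h_n$ are continuous, that $g_n\nearrow$, $h_n\searrow$, and that they converge to $f$ at continuity points — are routine consequences of Lemma \ref{Aprox1} and the shrinking-collar construction, and I would leave them to the reader.
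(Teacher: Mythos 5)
Your treatment of cases (ii) and (iii) of Definition \ref{admDC} is essentially the paper's: there $f$ is by hypothesis monotone on $\ell$ together with small neighborhoods of the relevant endpoints, and the observation that mollification with a kernel of radius $1/n$ preserves monotonicity on any interval whose $2/n$-neighborhood lies inside the region of monotonicity gives monotonicity of $g_n$, $h_n$ on $\ell$ for large $n$. The genuine gap is in case (i). You assert that monotonicity of $f|_\ell$ plus continuity of $f$ at $p_l$, $p_r$ yields, ``after a trivial adjustment,'' monotonicity of $f$ on a strictly larger arc $J\supset\ell$, and your whole reduction rests on this. It is false: $f$ can be increasing on $\ell$, continuous at $p_r$, and immediately decreasing on the adjacent arc, so no arc strictly containing $\ell$ carries a monotone restriction of $f$ --- and the lemma only permits modifying $g_n,h_n$, not $f$. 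Consequently, on the boundary layer of width about $2/n$ at (at least one of) the endpoints, the mollification window samples $f$ outside $\ell$, where its behavior is uncontrolled, and $g_n$ can genuinely fail to be monotone there. This is precisely the difficulty the lemma exists to resolve, and your argument does not address it.

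Your fallback for case (i) --- ``shift the mollification slightly\dots so that\dots the junction stays monotone'' --- does not close the gap (and your premise there, that the one-sided limits ``need not match the arc value,'' contradicts case (i), which assumes continuity at both endpoints, so the limits do match). The shift in Lemma \ref{Aprox1} is designed to force $g_n\le f$; it moves the sampling window entirely to one side, which protects one endpoint of $\ell$ but pushes the window outside $\ell$ at the other, so a non-monotone oscillation of $g_n$ near that endpoint persists. What is needed, and what the paper does, is an explicit surgery on $g_n$ itself: $g_n$ is automatically monotone on $[a_i+\frac{2}{n},\,b_i-\frac{2}{n}]$; one takes $d$, the element of $g_n^{-1}(g_n(a_i))\cap[a_i,a_i+\frac{2}{n}]$ farthest from $a_i$, and replaces $g_n$ by the constant value $g_n(a_i)$ on $[a_i,d]$ (and similarly at the other endpoint, and for $h_n$). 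This removes the oscillation while preserving continuity, the ordering $g_n\le f\le h_n$, the monotonicity in $n$, and the convergence at continuity points. Your shrinking-collar gluing is additional machinery that neither performs this surgery nor is needed in cases (ii)--(iii).
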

\begin{proof}
 We will use the following observation. If $f$ is monotone on $(\alpha - \epsilon, \beta +\epsilon)$ and $\varphi_\epsilon$ is the standard mollifying kernel with support in $[-\epsilon, \epsilon]$, then $f *\varphi_\epsilon$ is monotone on $[\alpha, \beta]$. Thus, if condition (ii) in Definition \ref{admDC} holds, then for sufficiently large $n$ functions $g_n$ and $h_n$ are monotone restricted to $I_i \cup \bigcup_{x_0\in\d I_i}B(x_0,\epsilon)\cap\d\Omega$.
 
Let us now suppose that condition (i) in Definition \ref{admDC} holds. In this case, we have to proceed differently. 
Due to the observation made at the beginning of the proof function $g_n$ is monotone on $[a_i+\frac 2n, b_i -\frac 2n]$. We take $g^{-1}(g(a_i))\cap [a_i,a_i+\frac 2n]$ and its element which is the farthest from $a_i$ is called  $d$. Then, on $[a_i, b_i - \frac 2n]$, we set
$$
\tilde g_n(x)
=\left\{\begin{array}{ll}
  g_n(a_i) & x\in [a_i, d],\\
  g_n(x) &\hbox{otherwise}.
 \end{array} \right.
$$
It is clear that $\tilde g_n$ has the desired properties.

Other cases are handled in a similar manner.
\end{proof}

We now show a comparison principle for solutions to the LG problem \eqref{lg} for continuous data.

\begin{proposition}\label{comparison}
Let $\Omega$ be convex and $f_1$, and $f_2$ continuous in $\partial \Omega$ satisfying the admissibility conditions \ref{admC1} and \ref{admC2} and such that $f_1\leq f_2$. Let $u_1$ and $u_2$ be the corresponding solutions to \eqref{lg} constructed in Section 3, then $u_1\leq u_2$.
\end{proposition}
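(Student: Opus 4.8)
The plan is to exploit the known level-set structure of the approximating solutions on strictly convex domains, since by the construction in Section 3 both $u_1$ and $u_2$ are obtained as uniform limits of solutions $v^1_n$, $v^2_n$ to the least gradient problem on the strictly convex domains $\Omega_n$ (or $\tilde\Omega_k$) with boundary data $f_{1,n}\le f_{2,n}$. Because the construction in Lemma \ref{prap} and in \eqref{defn} is order-preserving — $f_1\le f_2$ on $\d\Omega$ gives $f_{1,n}\le f_{2,n}$ on $\d\Omega_n$ by the pointwise formula \eqref{defn} — it suffices to prove the comparison $v^1_n\le v^2_n$ on each $\Omega_n$ and then pass to the pointwise (indeed uniform) limit. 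So the first step is to reduce the statement to a comparison principle for the Sternberg--Williams--Ziemer solutions on a fixed strictly convex domain with ordered continuous data.

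Second, on the strictly convex domain $\Omega_n$ I would argue at the level of superlevel sets. Writing $E^i_t=\{v^i_n\ge t\}$, recall from \cite{sternberg} that $\d E^i_t\cap\Omega_n$ is a (union of) area-minimizing chords, and that $v^i_n$ is continuous, so $x\in\d E^i_t$ forces $v^i_n$ to equal $f_{i,n}$ at the endpoints of the corresponding chord on $\d\Omega_n$. The key claim is: for every $t$, $E^1_t\subseteq E^2_t$. Suppose not; then there is a point where $v^1_n\ge t>v^2_n$, so the minimal surfaces $\d E^1_t$ and $\d E^2_t$ must cross transversally inside $\Omega_n$. I would then use the standard cut-and-paste (calibration) argument: swapping the minimal chords along a crossing produces competitors for $E^1_t$ and $E^2_t$ with no greater perimeter, and strict convexity of $\Omega_n$ rules out the boundary-coincidence degeneracy, forcing the chords to coincide — but then the ordering $f_{1,n}\le f_{2,n}$ at the shared endpoints contradicts $v^1_n\ge t>v^2_n$ there. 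Hence $E^1_t\subseteq E^2_t$ for all $t$, which is exactly $v^1_n\le v^2_n$.

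Third, I would pass to the limit: since $v^i_n\to u_i$ uniformly on $\Omega$ (by Lemma \ref{le2} and the Arzel\`a--Ascoli argument in Theorem \ref{main1}), the inequality $v^1_n\le v^2_n$ is preserved, giving $u_1\le u_2$ on $\Omega$. One should check that the same argument goes through for the intermediate approximations $\tilde\Omega_k$ used when there are infinitely many flat parts or humps, but this is purely bookkeeping since the monotone dependence of the data construction on $f$ is maintained throughout Lemmas \ref{prap} and \ref{l-inter}.

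The main obstacle is the transversal-crossing step: making rigorous that two distinct area-minimizing boundaries inside a strictly convex planar domain cannot cross, and handling the possibility that they overlap along a segment rather than crossing cleanly. In the plane this is classical (minimizing chords are line segments, and two segments with ordered endpoints on the boundary of a strictly convex set either are disjoint in the interior, are nested, or coincide), so the delicate point is really the case where the chords of $\d E^1_t$ and $\d E^2_t$ share an endpoint on $\d\Omega_n$: there one uses strict convexity to conclude the chords are identical and then reads off the contradiction from $f_{1,n}\le f_{2,n}$. I expect this to be a short but careful argument; everything else is a routine limiting procedure built on results already established in the paper.
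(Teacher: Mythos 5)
Your proposal is correct and follows essentially the same route as the paper: reduce to the strictly convex approximants $\Omega_n$ with ordered data $f_1^n\le f_2^n$, compare the Sternberg--Williams--Ziemer solutions there, and pass to the pointwise limit via the convergence established in Theorem \ref{main1}. The only difference is that the paper simply cites \cite{sternberg} for the comparison $v_1^n\le v_2^n$ on the strictly convex domain, whereas you sketch a proof of that step via non-crossing of minimal chords; that extra detail is sound but not needed.
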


\begin{proof}
Let $\Omega_n$ be the strict convex sets constructed in Lemma \ref{prap} and $f_1^n$ and $f_2^n$  be as defined in \ref{defn} on $\Omega_n$ from $f_1$ and $f_2$.
We assume that $v_1^n$ (resp. $v_2^n$) is the unique solution to \eqref{lg} on $\Omega_n$ with corresponding trace $f_1^n$ (resp. $f_2^n$) and $u_1^n$ (resp. $u_2^n$) its restriction to $\Omega$. By definition, we have $f_1^n\leq f_2^n$, then by \cite{sternberg}, we have, $v_1^n\leq v_2^n$.
We know by the proof of Theorem \ref{main1} that $u_1^n$ and $u_2^n$ converge pointwise correspondingly to $u_1$ and $u_2$ in $\Omega$, therefore 
 $u_1\leq u_2$.
 \end{proof}

Our goal is to show the following theorem. 
 \begin{theorem}\label{Main2}
Let us suppose that the geometric assumptions on $\Omega$, specified in Theorem \ref{main1} hold, and in particular $\Omega$ is convex. Moreover,
$f\in BV(\d\Omega)$
$f$ satisfies the admissibility conditions \eqref{admC1} and \eqref{admC2} and $f$ has finitely many humps. Then, there exists a solution $u$ to the least gradient problem (\ref{lg}).
\end{theorem}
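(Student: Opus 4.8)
The plan is to sandwich $f$ between two monotone-in-$n$ sequences of continuous data, apply Theorem \ref{main1} to each, and pass to the limit using the comparison principle together with a trace identification argument. First I would invoke Corollary \ref{Aprox2} (together with Lemma \ref{gnadm2} and Lemma \ref{gnadm1}) to produce an increasing sequence $g_n \in C(\d\Omega)$ and a decreasing sequence $h_n \in C(\d\Omega)$, each converging to $f$ at every continuity point of $f$, each satisfying the admissibility conditions \eqref{admC1} and \eqref{admC2}, and each having finitely many humps (this last point uses that $f$ has finitely many humps, so the mollification does not create new ones and the endpoint modifications in Lemma \ref{gnadm1} are localized). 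By Theorem \ref{main1} we obtain continuous least gradient functions $v_n$ with $Tv_n = g_n$ and $u_n$ with $Tu_n = h_n$. Since $g_n \le g_{n+1} \le f \le h_{n+1} \le h_n$, Proposition \ref{comparison} gives $v_n \le v_{n+1} \le u_{n+1} \le u_n$ pointwise in $\Omega$; monotonicity and the uniform $L^\infty$ bound (from the maximum principle, $\|v_n\|_\infty, \|u_n\|_\infty \le \|f\|_\infty$) then force pointwise convergence $v_n \uparrow v$ and $u_n \downarrow u$ with $v \le u$.

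Next I would show that $v$ (and symmetrically $u$) is a least gradient function. Pointwise monotone convergence plus the uniform bound gives $v_n \to v$ in $L^1(\Omega)$ by dominated convergence; since each $v_n$ is a least gradient function, the classical closure result \cite{miranda} applies and $v$ is a least gradient function, and likewise $u$. It remains to identify the trace. The key claim is $Tv = f$ (it would suffice to establish this for either $v$ or $u$; either one is then the desired solution). For this I would argue at each boundary point: at a continuity point $x$ of $f$, monotonicity of $g_n$ toward $f(x)$ and of $h_n$ toward $f(x)$, combined with $v_n \le v \le u \le u_n$ on $\Omega$ near $x$, should pinch the boundary trace to $f(x)$ — using that the $v_n$'s are continuous up to $\d\Omega$ with $v_n|_{\d\Omega} = g_n$, so for $y\in\Omega$ close to $x$ one has $g_n$-controlled upper and lower bounds on $v(y)$. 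At the (countably many) jump points of $f$ one uses that $BV$ traces are determined up to a set of $\mathcal H^1$-measure zero, so controlling the trace at all continuity points is enough to conclude $Tv = f$ as an $L^1(\d\Omega)$ identity. Then $v$ (equivalently, we may also take $u$) is admissible in \eqref{lg} and minimizes $\int_\Omega |Dv|$, which proves the theorem.

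The main obstacle I expect is the trace identification, i.e. proving $Tv = f$ rather than merely $Tv \le f \le Tu$ with a possible gap. The difficulty is that $v = \lim v_n$ and $u = \lim u_n$ need not coincide — the excerpt explicitly warns that in general $u \neq v$ — so one cannot simply squeeze. What saves the argument is the admissibility conditions: \eqref{admC1}–\eqref{admC2} are precisely what prevents the approximating solutions from "pulling away" from the boundary data on flat parts and creating jumps in the limit trace that are not already jumps of $f$. Concretely, on a flat part $\ell$ the comparison principle controls how level sets $\d\{v_n \ge t\}$ attach to $\d\Omega$, and the admissibility inequality \eqref{A} ensures the relevant chords stay short enough that the traces of $v_n$ on $\ell$ converge to $f|_\ell$ in $L^1(\ell)$ even where $f$ is discontinuous. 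Making this rigorous — tracking the level-set geometry near flat parts in the limit — is the technical heart of the proof; at boundary points lying on strictly convex arcs the Sternberg–Williams–Ziemer barrier construction gives the trace convergence more directly.
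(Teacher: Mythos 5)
Your proposal follows essentially the same route as the paper: approximate $f$ from below and above by monotone sequences $g_n$, $h_n$ via Corollary \ref{Aprox2} and Lemmas \ref{gnadm1}--\ref{gnadm2}, solve via Theorem \ref{main1}, use Proposition \ref{comparison} to get monotone limits $v\le u$ that are of least gradient by \cite{miranda}, and pinch the trace at continuity points of $f$ using the continuity of $u_N$ up to $\d\Omega$ together with $v_n\le v\le u\le u_n$. The extra machinery you anticipate in your last paragraph (tracking level-set geometry near flat parts) is not needed, since the pointwise squeeze at points of $C_f$, which has full $\mathcal H^1$-measure, already yields $Tu=Tv=f$.
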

\begin{proof}
Define $C_f=\{x\in \d\Omega, \text{$f$ is continuous at $x$}\}$. By Lemmas \ref{Aprox2}, \ref{gnadm1} and \ref{gnadm2} we construct a decreasing sequence of continuous functions $h_n$ such that 
$h_n\to f$ in $C_f$. For each $h_n$, by Theorem \ref{main1}, there exists a continuous solution $u_n$ to \eqref{lg} on $\Omega$, with trace $h_n$. By the comparison 
principle in Proposition \ref{comparison}, we have that $u_n$ is a decreasing sequence. Then, these functions converge to a function $u$ at every point $x\in \Omega$. By 
\cite{miranda}, $u$ is a least gradient function. 

By a similar token, by Lemmas \ref{Aprox2}, \ref{gnadm1} and \ref{gnadm2} we construct an increasing sequence of continuous functions $g_n$ such that 
$g_n\to f$ in $C_f$. For each $h_n$, by Theorem \ref{main1}, there exists a continuous solution $v_n$ to \eqref{lg} on $\Omega$ with trace $g_n$. By the comparison 
principle, in Proposition \ref{comparison}, we have that $v_n$ is a decreasing sequence, converging to a function $v$ at every point $x\in \Omega$. By 
\cite{miranda}, $v$ is a least gradient function.

We shall prove that 
$$
Tu(x) = f(x) = T v(x)\qquad\hbox{for } x\in C_f.
$$
We claim $f(x)\ge Tu(x)$ for $x\in C_f$.
If it were otherwise, $Tu(x)=t>f(x)=\tau$, then there would exist $s$ in $(\tau,t)$.
Since $h_n(x)\to f(x)$ then, there exists $N$ such that $h_N(x)<s$. By continuity of $u_N$, we gather that $u_N(y)<s$ for every $y$ in a neighborhood of $x$ in $\Omega$, we may assume that this is a ball $B(x,r).$
Since $u_n$ is a decreasing sequence, then $u_n(y)<s$ for all $n\geq N$. Letting $n\to \infty$, we get that $u(y)\leq s$ for all $y\in B(x,r)\cap \Omega$, contradicting the fact that $Tu(x)=t<s$. 

Now, we consider  sequence $g_n$  converging to $f$ from below and the corresponding solutions $v_n$ to the least gradient problem. The sequence $v_n$ converges to a least gradient function $v$.

The same argument as above implies that
$ Tv(x) \ge f(x)$.
Since we automatically have that 
$$
u_n(x) \ge u(x) \ge v(x) \ge v_n$$
we deduce from the above inequalities that
$$
f(x) = Tu(x) = Tv(x) = f(x)\qquad\hbox{for }x\in C_f.
$$
Since $C_f\subset \Omega$ has full measure, we deduce that $T u = f$, as desired. The same argument yields $T v = f$.
\end{proof}

\begin{remark}
 In the course of the above proof, we constructed two solutions $u$ and $v$ to the least gradient problem having the trace at the boundary. However, they need not be equal.
\end{remark}
 
 \subsection{Discontinuous data with infinitely many humps}
 We assume in this section that the data can have infinitely many humps on flat parts of $\p \Omega$. To be specific, we assume $\ell=[p_l,p_r]$ is a line segment, where $f$ has infinitely many humps $I_i$, $i\in\cI$, then since $f$ satisfies the admissibility condition \#2 the lengths of $I_i$ must converge to 0 and the humps endpoints must converge to the one of the endpoints of $\ell$. For the sake of simplicity of the presentation, we assume that $p_l$ is the point of accumulation of $I_i$'s endpoints.
 
 \begin{proposition} If $f\in BV(\d\Omega)$, $\ell=[p_l,p_r]$ is a flat part containing infinitely many humps $I_i$, $i \in \cI$, $p_l$ is the point where the humps accumulate, then
 $f$ is continuous at $p_l$. 
 \end{proposition}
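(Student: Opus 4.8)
The plan is to show that the two one-sided limits of $f$ at $p_l$ along $\d\Omega$ agree; since $f\in BV(\d\Omega)$ both one-sided limits exist, and their equality is exactly what it means for $f$ to be continuous at $p_l$. Near $p_l$ the boundary $\d\Omega$ is a Lipschitz arc, so Euclidean and arclength distance are comparable there, and $\d\Omega$ splits at $p_l$ into the portion of $\ell=[p_l,p_r]$ emanating from $p_l$ and the complementary arc $\gamma:=\d\Omega\setminus\ell$, which is a single arc having $p_l$ as a limit point. Let $L_\ell$ be the limit of $f$ at $p_l$ from inside $\ell$ and $L_\gamma$ the limit of $f(x)$ as $x\to p_l$ along $\gamma$; the goal is $L_\ell=L_\gamma$.

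First I would record the facts about the humps already noted before the statement: because $f$ obeys admissibility condition \#2 and infinitely many humps $I_i=[a_i,b_i]$ accumulate at $p_l$, we have $|a_i-b_i|\to0$ and $a_i,b_i\to p_l$. Since $f\equiv e_i$ on $I_i$, choosing $c_i\in I_i$ gives $c_i\to p_l$ within $\ell$ with $f(c_i)=e_i$, so $e_i\to L_\ell$; in particular $\lim_i e_i$ exists.

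Second, I would invoke the geometric clause of Definition \ref{admDC2}. For each such hump it supplies a point $y_i\in\d\Omega\setminus\ell=\gamma$ with $f(y_i)=e_i$ that realizes $\dist\!\big(a_i,f^{-1}(e_i)\cap(\d\Omega\setminus I_i)\big)$, and by \eqref{DA} this distance is $<|a_i-b_i|$. Hence $\dist(a_i,y_i)<|a_i-b_i|\to0$, and with $a_i\to p_l$ this gives $y_i\to p_l$. As the $y_i$ all lie on $\gamma$ and tend to $p_l$, we get $e_i=f(y_i)\to L_\gamma$. Comparing with the first step, $L_\ell=\lim_i e_i=L_\gamma$, so $f$ is continuous at $p_l$.

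The computation is elementary; the point needing care is the use of Definition \ref{admDC2}. One must use not only the inequality \eqref{DA} but also its second requirement --- that the distance from $a_i$ (and from $b_i$) to $f^{-1}(e_i)\setminus I_i$ is attained at a point lying outside $\ell$ --- since otherwise the approximating points $y_i$ might sit on $\ell$ itself and say nothing about $L_\gamma$. (The set $f^{-1}(e_i)\cap(\d\Omega\setminus I_i)$ is automatically nonempty, for otherwise \eqref{DA} would read $+\infty<|a_i-b_i|$; and, as already observed, admissibility \#2 is what forces the humps to accumulate at an endpoint of $\ell$ rather than at an interior point, where the distance to $\d\Omega\setminus\ell$ would be bounded below.)
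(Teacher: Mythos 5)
Your argument is correct and follows essentially the same route as the paper: both one-sided limits of $f$ at $p_l$ exist because $f\in BV(\d\Omega)$, and the hump values $e_i$ are shown to converge to each of them --- to the $\ell$-side limit via points of $I_i$, and to the other via the nearby points in $f^{-1}(e_i)\cap(\d\Omega\setminus\ell)$ supplied by admissibility condition \#2 (inequality \eqref{DA} plus its second clause), forcing the two limits to coincide. The only place the paper is more careful is that for discontinuous $f$ the distance in \eqref{DA} need not be attained within $f^{-1}(e_i)$, so instead of an exact minimizer $y_i$ with $f(y_i)=e_i$ it uses minimizing sequences $\eta_i^k\in f^{-1}(e_i)\cap(\d\Omega\setminus I_i)$ converging to $y_i$ (and a good representative of $f$); this is a routine repair that does not change your argument.
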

 
 \begin{proof}
Since $f$ is in $BV$, we will take the so-called `good representative' of $f$, see  \cite[Theorem 3.28]{AFP00}. From now on, we assume that we work with such $f$.

We shall show that 
 $$
\lim_{x\to p_l} f|_{\ell}(x),\quad
\lim_{x\to p_l} f|_{\p\Omega\setminus\ell}(x)\text{\,\, exist}
 $$
and they are equal.
In fact, since $f$ is of bounded variation on $\ell$, then for every $\varepsilon>0$
$$ 
\sup_{|x_{i+1}-x_{i}|<\varepsilon} \sum_{i=0}^{n-1} |f(x_{i+1})-f(x_i)|<\infty,
$$
where $x_0,\ldots x_n$ is a partition of  $\ell$. 
We know that $f$ is constant on $(a_i,b_i)$ and jumps may occur at $a_i$'s and $a_i\to p_l$. Now, we take a sequence $x_n$ such that $|x_n - p_l|$ decreases to zero, then
$$
\left|f(x_n)-f(x_m)\right|=\left|f(x_n)-f(x_{n+1})+\cdots+ f(x_{m-1})-f({x_m})\right|\leq \sum_{i=1}^\infty \left|f(x_{i+1})-f(x_{i})\right| .
$$
Due to the boundedness of the total variation, the series $\sum_{n=0}^\infty |f(x_n)-f(x_{n+1})|$ converges. Hence, $f(x_n)$ is a Cauchy sequence and 
$$
\lim_{n\to \infty} f(x_n) = \lim_{x\to p_l} f|_{\ell}(x) =: f(p_l^+).
$$ 
The same argument yields 
$$
\lim_{x\to p_l} f|_{\p\Omega\setminus\ell}(x) =: f(p_l^-).
$$

In order to proceed, we need points $y_i$, $z_i$, which are minimizers of the left-hand-side of  the admissibility condition (\ref{DA}). If $f$ were continuous, then existence of $y_i$, $z_i$ such that
$$
\dist(a_i, f^{-1}(e_i) \cap (\partial\Omega \setminus I_i)) = |a_i - y_i|, \qquad
\dist(b_i, f^{-1}(e_i) \cap (\partial\Omega \setminus I_i)) = |b_i - z_i|
$$
would be obvious. When $f$ is not necessarily continuous, we proceed differently. We take
$$
\eta_i^k \in f^{-1}(e_i) \cap (\partial\Omega \setminus I_i), \qquad
\xi_i^k \in f^{-1}(e_i) \cap (\partial\Omega \setminus I_i)
$$
such that
$$
|a_i - \eta_i^k| \to \dist(a_i, f^{-1}(e_i) \cap (\partial\Omega \setminus I_i)) , \qquad 
|b_i - \xi_i^k| \to \dist(b_i, f^{-1}(e_i) \cap (\partial\Omega \setminus I_i).
$$
Since $\eta_i^k$ and $\xi_i^k$ are bounded, we may assume that 
$$
\lim_{k\to\infty} \eta_i^k =  y_i,  \qquad \lim_{k\to\infty} \xi_i^k =  z_i.
$$
Of course the limits $\lim_{k\to\infty}f( \eta_i^k) = e_i = \lim_{k\to\infty}f( \xi_i^k).$ Thus, abusing the notation, we write  $f(a_i) = f(y_i)$, $f(b_i) = f(z_i)$.

Now, since for any sequence $x_i\in (a_i,b_i)\subset\ell$ converging to $p_l$, there is a sequence $y_i\in \d\Omega \setminus\ell$ converging to $p_l$ and such that $f(x_i) = f(y_i)$, we infer that the one-sided limits agree, $f(p_l^-) = f(p_l^+)$. A good representative must be continuous at a point if the one-sided limits are equal. 
As a result,
$f$ is continuous at $0$.
\end{proof}

When we deal with an infinite number of humps, then we assume that only one flat part contains an infinite number of them. This restriction is introduced solely for the sake of simplicity of the exposition.

We construct the following sequence of least gradient functions on $\Omega$. 

Since $f$ might be discontinuous at $a_i$ or $b_i$, then we choose $x^{\star}_i\in [a_i,b_i]$ and $y^{\star}_i\in [y_i,z_i]$ such that $f(x^{\star}_i)=f(y^{\star}_i)$. Let $T_i$ be the triangle $\conv (a,x^{\star}_i,y^{\star}_i)$, and $Q_i$ the trapezoid\break\hfill $\conv (x^{\star}_{i+1,}x^{\star}_{i},y^{\star}_{i},y^{\star}_{i+1})$. We construct the following sequence of least gradient functions on $\Omega$ as follows.

Let $Q_0=\Omega\setminus T_1$ and 
 $$
 f_0(x)=\begin{cases} f(x) \qquad &x \in \partial Q_0\cap \partial \Omega,\\
 f(x^\star_1) \qquad &x\in [x^{\star}_1, y^{\star}_1]
 \end{cases}
 $$
and $v_0$ be a least gradient function on $Q_0$ with trace $f_0$ and we define,
$$
 u_0(z)=\begin{cases} v_0(z) \qquad &z \in Q_0,\\
 f(x^\star_1) \qquad &z\in T_1.
 \end{cases}
 $$ 
 \begin{lemma}\label{lm:u0 lg} Let us assume that $u_0$ is defined by the above formula.
Then, $u_0$ is a least gradient function in $\Omega$.
 \end{lemma}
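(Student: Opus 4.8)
The statement to prove is that the function $u_0$, obtained by gluing a least gradient function $v_0$ on the cut-off domain $Q_0 = \Omega\setminus T_1$ to the constant $f(x^\star_1)$ on the triangle $T_1$, is a least gradient function on all of $\Omega$. The natural tool is the characterization of least gradient functions through their superlevel (and sublevel) sets being area-minimizing boundaries relative to $\Omega$. So the plan is: first, recall that $v_0$ being least gradient on $Q_0$ means each $\partial\{v_0 \ge t\}$ is minimizing in $Q_0$; then show that adding the region $T_1$ where $u_0$ is constant does not destroy minimality of the level sets of the extended function.

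The key steps I would carry out are as follows. \emph{Step 1.} Identify the superlevel sets $\{u_0 \ge t\}$. For $t > f(x^\star_1)$ (or $< f(x^\star_1)$ depending on orientation) the set $\{u_0\ge t\}$ coincides with $\{v_0 \ge t\}$, since $u_0 \equiv f(x^\star_1)$ on $T_1$; for the critical value $t = f(x^\star_1)$ the set picks up the whole triangle $T_1$; on the other side of $f(x^\star_1)$, $\{u_0 \ge t\} = \{v_0\ge t\}\cup T_1$. \emph{Step 2.} For the non-critical values, the reduced boundary of $\{u_0\ge t\}$ inside $\Omega$ equals the reduced boundary of $\{v_0\ge t\}$ inside $Q_0$ together with possibly the segment $[x^\star_1, y^\star_1]$, and one must check this is still a minimal (area-minimizing) set in $\Omega$. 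The point is that near $[x^\star_1,y^\star_1]$ the boundary of the superlevel set is exactly that chord, which is a straight segment, hence length-minimizing among competitors with the same endpoints on $\partial\Omega$; and the competitor cannot be shortened by entering $T_1$ because $T_1$ is a convex triangle with $[x^\star_1,y^\star_1]$ as one side, so the chord is the shortest path between $x^\star_1$ and $y^\star_1$ that separates $T_1$ from the rest. \emph{Step 3.} Conclude via the standard equivalence (as in \cite{sternberg}, \cite{mazon}): a $BV$ function all of whose superlevel sets have $\partial\Omega$-minimizing boundary is a least gradient function. One should also verify that $u_0 \in BV(\Omega)$, which is immediate since $v_0\in BV(Q_0)$ and the glued piece is constant with finite-perimeter interface.

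The main obstacle I expect is \emph{Step 2}: verifying that the superlevel sets of the glued function are genuinely area-minimizing in $\Omega$ and not merely in $Q_0$. One has to rule out competitors that dip into the triangle $T_1$. The geometry should make this work — by construction $y^\star_i \in [y_i,z_i]$ and $x^\star_i \in [a_i,b_i]$ are chosen with $f(x^\star_i)=f(y^\star_i)$, and the admissibility condition \eqref{DA} guarantees $|x^\star_1 - y^\star_1| \le \dist(a_1,\partial\Omega\setminus I_1)+\dist(b_1,\cdot) + (\text{length slack}) < |a_1-b_1| + \cdots$, i.e. the chord is genuinely short — but one must assemble this carefully and invoke the minimality of $v_0$'s level sets in $Q_0$ together with a cutting-and-pasting argument: any competitor surface for $\{u_0\ge t\}$ in $\Omega$ can be replaced, without increasing length, by one that agrees with the chord on $T_1$ and lies in $Q_0$ elsewhere, where minimality of $v_0$ then applies. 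A secondary subtlety is handling the critical level $t=f(x^\star_1)$, where the superlevel set has a fat part; here one uses that $u_0$ is constant on $T_1$ so no new boundary is created strictly inside $T_1$, and the interface $[x^\star_1,y^\star_1]$ is a single segment.
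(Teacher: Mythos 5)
Your route is genuinely different from the paper's, and it contains a gap at exactly the point you flag. For orientation: the paper never analyzes the level sets of $u_0$. It takes a least gradient function $u$ on $\Omega$ with the same trace, uses the fact that $\conv(a_1,b_1,d_1,c_1)\supset T_1$ is a fat level set of $u$ to conclude that $u|_{Q_0}$ has trace $f_0$ on $\partial Q_0$ and is therefore an admissible competitor for $v_0$, giving $|Du|(Q_0)\ge |Dv_0|(Q_0)=|Du_0|(Q_0)$; it then adds $|Du_0|(T_1)=0\le |Du|(T_1)$ and the observation that $u_0$ has no jump across $[x^{\star}_1,y^{\star}_1]$, to obtain $|Du_0|(\Omega)\le |Du|(\Omega)$, whence $u_0$ is a minimizer because $Tu_0=Tu$. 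The whole point of that argument is that $u_0$ is compared with one well-understood function rather than with arbitrary competitors; the price is the external input that the minimizer on $\Omega$ has the indicated fat level set.

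Your Step 2 tries to handle arbitrary competitors, and that is where the proof is incomplete. A minor inaccuracy first: since the trace of $v_0$ on $[x^{\star}_1,y^{\star}_1]$ is the constant $f(x^{\star}_1)$, for almost every $t$ the interface lies either in the interior or in the exterior of $\{u_0\ge t\}$, so the chord is never a piece of $\partial\{u_0\ge t\}$; its length-minimality, and the inequality \eqref{DA}, are not the mechanism that makes the lemma work. The substantive gap is this: minimality of $\partial\{v_0\ge t\}$ in $Q_0$ only controls perturbations compactly supported in $Q_0$, whereas a competitor $E$ for $\{u_0\ge t\}$ in $\Omega$ may differ from it in a neighbourhood of the interface and inside $T_1$, which is not compactly contained in $Q_0$. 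Your proposed repair, replacing $E$ ``without increasing length'' by a set containing (or avoiding) $T_1$, is precisely the assertion that needs proof: perimeter is not monotone under union with a convex set, and submodularity only yields $P(E\cup T_1;\Omega)\le P(E;\Omega)+P(T_1;\Omega)-P(E\cap T_1;\Omega)$, which does not close the argument. You would need a genuine cut-and-paste or calibration argument across $[x^{\star}_1,y^{\star}_1]$, or else switch to the paper's strategy of comparing total variations directly with the known minimizer $u$. As written, the key inequality of Step 2 is asserted rather than established.
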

 
 \begin{proof}
Let $u$ be a least gradient function on $\Omega$, then $\conv(a_1, b_1, d_1, c_1)$ is a fat level set of $u$. Hence, the restriction of $u$ on $Q_0$ has trace on $[x^{\star}_1, y^{\star}_1]$, and then $T_{Q_0}u=f_0$. Therefore, since $v_0$ is a least gradient function on $Q_0$ with trace $f_0$, we get that
$$|Du|(Q_0)\geq |Dv_0|(Q_0)=|Du_0|(Q_0).$$
We notice that the one-sided limits of $u$ on the segment $[x^{\star}_1,y^{\star}_1]$ are equal. 

The same is true about $u_0$.
Since $u_0$ is constant in $T_1$, then 
$$|Du_0|(T_1)=0\leq |Du|(T_1).$$
As a result, we conclude that $|Du_0|(\Omega)\leq |Du|(\Omega)$. In addition, $T u_0=T u$ and $u$ 
is a least gradient function. Therefore,
$|Du_0|(\Omega)=|Du|(\Omega)$ and $u_0$ is a least gradient function too.
 \end{proof}
 
We now define the function $f_1$ on $Q_1$ as follows,
  $$
 f_1(x)=\begin{cases} f(x), \qquad &x \in \partial Q_1\cap \partial \Omega,\\
 f(x^\star_1), \qquad &x\in [x^{\star}_1,y^{\star}_1],\\
 f(x^{\star}_2), \qquad &x\in [x^{\star}_2,y^{\star}_2].
 \end{cases}
 $$
 Let $v_1$ be a least gradient function on $Q_1$ with trace $f_1$ and define the function
 $$
 u_1= v_0 \chi_{ Q_0} +  v_1\chi_{Q_1} + 
 f(x^\star_2)\chi_{T_2}.
 $$
 
Notice that $u_1=u_0$ on $Q_0$ and
 $$
 T u_1=\begin{cases} f(x), \qquad & x \in \partial (Q_0\cup Q_1)\cap \partial \Omega,\\
 f(x^\star_2), \qquad & x\in \partial T_2.
 \end{cases}
 $$ 
 
Since $\conv(a_1,b_1,d_1,c_1)$ and $\conv(a_2,b_2,d_2,c_2)$ are fat level sets of a least gradient functions on $\Omega$ with trace equal to $T u_1$, we infer as in Lemma 
\ref{lm:u0 lg} that $u_1$ is a least gradient function.
 
Recursively, we construct  the sequence of boundary data
 $$
 f_n(x)=\begin{cases} f(x), \qquad &x \in \partial Q_n\cap \partial \Omega,\\
 f(x^\star_n), \qquad &x\in [x^{\star}_n, y^{\star}_n],\\
 f(x^\star_{n+1}), \qquad &x\in [x^{\star}_{n+1}, y^{\star}_{n+1}].
 \end{cases}
 $$
We take $v_n$ to be a least gradient function on $Q_n$ with trace $f_n$ and define the function, 
$$
u_n = \sum_{k=0}^n v_k \chi_{Q_k} +
f(x^\star_{n+1}) \chi_{T_{n+1}}.
$$
Arguing as before, we come to the conclusion that $u_n$ is a least gradient function and 
$$
 T u_n=\begin{cases} f(x), \qquad &x \in \partial (Q_0\cup Q_1\cup \cdots\cup Q_n)\cap \partial \Omega,\\
 f(x^\star_{n+1}), \qquad &x\in \partial T_{n+1}.
\end{cases}
$$
Moreover, we notice  that
 $u_n(z)=v_i(z)=u_i(z)$ for $z\in Q_i$, $i=1,\cdots,n-1$ and $u_n(z)=  f(x^\star_{n+1})$ in $T_{n+1}$. 
 
By \cite{miranda}, the sequence $\{ u_n\}_{n=1}^\infty$ converges, up to a subsequence, to a least gradient function $u$. The construction of $u_n$ is such that $u_n = u_k$ on $\bigcup_{i=1}^k Q_i$ for $n\ge k$. As a result $T_\Omega$ and $T_{\bigcup_{i=1}^k Q_i}$ are equal on 
$\p (\bigcup_{i=1}^k Q_i) \setminus\Omega$. From this fact we deduce $T_\Omega u = f$.
Hence, we just have showed the following theorem:
 
\begin{theorem}\label{Main3}
Let us suppose that the geometric assumptions on $\Omega$ specified in Theorem \ref{main1} hold, in particular $\Omega$ is convex. Moreover,
$f\in BV(\d\Omega)$
satisfies the admissibility conditions \eqref{admC1} and \eqref{admC2}. Then, there exists a solution $u$ to the least gradient problem (\ref{lg}).
 \end{theorem}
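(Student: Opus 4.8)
The plan is to obtain the solution as an $L^1$-limit of least gradient functions on $\Omega$, each of which coincides, away from the single accumulation point of the humps, with a function built by the finite-hump theory of Theorem \ref{Main2}. First I would record the preparatory facts. By the Proposition preceding this statement, the good representative of $f$ is continuous at the endpoint $p_l$ of $\ell=[p_l,p_r]$ where the humps $I_i=[a_i,b_i]$ accumulate, $|a_i-b_i|\to 0$, and the minimizing points $y_i,z_i\in\partial\Omega\setminus\ell$ of \eqref{DA} satisfy $y_i\to p_l$; by the admissibility condition \eqref{admDC2} one can fix (as constructed just above the statement) cut points $x^\star_i,y^\star_i$ with $f(x^\star_i)=f(y^\star_i)=e_i$. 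These give the triangles $T_i=\conv(p_l,x^\star_i,y^\star_i)$, which (relabelling for large $i$ if necessary) are nested with $\bigcap_i T_i=\{p_l\}$, the trapezoids $Q_i=T_i\setminus T_{i+1}$, and $Q_0=\Omega\setminus T_1$. The key geometric point is that each $Q_n$ meets $\ell$ in a compact segment bounded away from $p_l$, hence containing only finitely many humps, so the data $f_n$ on $\partial Q_n$ — equal to $f$ on $\partial Q_n\cap\partial\Omega$ and equal to the constant $e_n$ (resp. $e_{n+1}$) on the chord $[x^\star_n,y^\star_n]$ (resp. $[x^\star_{n+1},y^\star_{n+1}]$) — has finitely many humps on the convex domain $Q_n$ and, because the constant levels match the corresponding values of $f$, still satisfies the admissibility conditions.

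Next I would carry out the inductive construction already sketched above the statement: let $v_n$ be the least gradient function on $Q_n$ with trace $f_n$, which exists by Theorem \ref{Main2}, and set $u_n=\sum_{k=0}^n v_k\chi_{Q_k}+e_{n+1}\chi_{T_{n+1}}$. Since the traces of $v_k$ and $v_{k+1}$ agree on the shared chord $[x^\star_{k+1},y^\star_{k+1}]$ (both equal $e_{k+1}$) and $v_n$ agrees there with the constant $e_{n+1}$, the pasted function lies in $BV(\Omega)$ with $|Du_n|(\Omega)=\sum_{k=0}^n|Dv_k|(Q_k)$, no jump being created along the interfaces. That $u_n$ is a least gradient function on $\Omega$ is then proved exactly as in Lemma \ref{lm:u0 lg}: any least gradient competitor $w$ with trace $Tu_n$ has, by the admissibility condition \#2 on the finitely many humps $I_1,\dots,I_n$, the quadrilaterals $\conv(a_i,b_i,d_i,c_i)$ as fat level sets, whence the restriction of $w$ to $Q_k$ has trace $f_k$; minimality of $v_k$ on $Q_k$, together with $|Du_n|(T_{n+1})=0\le|Dw|(T_{n+1})$ and additivity of the total variation, gives $|Du_n|(\Omega)\le|Dw|(\Omega)$, and $Tu_n=Tw$ forces equality.

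Then I would pass to the limit. The maximum principle gives $\|u_n\|_{L^\infty}\le\|f\|_{L^\infty}$, and a $BV$ trace-extension bound controls $|Du_n|(\Omega)$ by $C(\|Tu_n\|_{L^1(\partial\Omega)}+\mathrm{Var}_{\partial\Omega}(Tu_n))$, uniformly in $n$, since $Tu_n$ arises from $f$ by replacing the humps near $p_l$ with one constant and so does not increase the variation. By $BV$ compactness a subsequence of $u_n$ converges in $L^1(\Omega)$ to some $u\in BV(\Omega)$, which is a least gradient function by \cite{miranda}. For the boundary condition, the construction guarantees $u_n=u_k$ on $\Omega\setminus T_{k+1}$ for every $n\ge k$, hence $u=u_k$ there; since the trace is a local notion, $T_\Omega u$ equals $T_{\Omega\setminus T_{k+1}}u_k=f$ on $\partial(\Omega\setminus T_{k+1})\cap\partial\Omega$, and as $k\to\infty$ these sets exhaust $\partial\Omega\setminus\{p_l\}$, a set of full $\mathcal H^1$-measure, so $T_\Omega u=f$ a.e. The case in which several flat parts carry infinitely many humps is handled the same way, one accumulation endpoint at a time.

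The step I expect to be the main obstacle is the verification, inside the inductive construction, that each pasted $u_n$ is genuinely a least gradient function on all of $\Omega$ — equivalently, that the auxiliary chords $[x^\star_i,y^\star_i]$ are invisible to the functional. This rests on transferring the admissibility condition from $f$ on $\partial\Omega$ to $f_n$ on $\partial Q_n$ so that the fat-level-set structure forced on any competitor is aligned with the cuts; checking the inequality \eqref{DA} for $f_n$ on $Q_n$ (rather than merely invoking ``as before'') is the delicate point, and it is precisely the matching $f(x^\star_i)=f(y^\star_i)=e_i$ together with the shape of the triangles $T_i$ that makes it work. A secondary technicality is the uniform $BV$ bound on $\{u_n\}$; if one prefers to avoid the trace-extension estimate, the same bound follows by comparing each $u_n$ with a least gradient function for its own data $Tu_n$ and using that these data have uniformly bounded total variation.
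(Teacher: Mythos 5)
Your proposal reproduces the paper's own argument: the same decomposition of $\Omega$ into the nested triangles $T_i=\conv(p_l,x^\star_i,y^\star_i)$ and trapezoids $Q_i$, the same pasted functions $u_n=\sum_k v_k\chi_{Q_k}+f(x^\star_{n+1})\chi_{T_{n+1}}$ shown to be of least gradient via the fat-level-set argument of Lemma \ref{lm:u0 lg}, and the same passage to the limit via \cite{miranda} with the trace recovered from the fact that $u_n=u_k$ on $\bigcup_{i\le k}Q_i$. The extra care you take over the admissibility of $f_n$ on $Q_n$ and the uniform $BV$ bound only fills in details the paper leaves implicit, so this is essentially the same proof.
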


\section{Examples}\label{examples}
We present a few examples showing how our theory works. We set $\Omega = (-L,L)\times (-1,1)$, where $L>1$ and
$g:(-L,L)\to \bR$, by formula $g(x) = L^2 -x^2$. Furthermore, we define
$f:\d\Omega\to \bR$, by formula $f(x_1, \pm 1) = g(x_1)$ and $f(\pm L, x_2) =0$. We take $\lambda>0$. 

Here is the first example.
We introduce $f_\lambda = \min\{ f, g(L-\lambda)\}.$
\begin{cor}\label{co-1}
 If $\Omega$ and $f_\lambda$ are defined above, then:\\
 a) If $\lambda\in (0,1)$, then the admissibility condition \# 2 holds and $u$, a solution to (\ref{lg}), is given by the following formula,
 $$
 u_\lambda(x_1,x_2) = f_\lambda(x_1).
 $$\\
 b) If $\lambda\in (1,L)$, then the admissibility condition \# 2 
 is violated.\\
 c) If $\lambda=1$, then $u$ given below is  a solution to (\ref{lg}),
 $$
 u(x_1,x_2) = f_1(x_1).
 $$
\end{cor}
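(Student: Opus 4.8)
The plan for part (a) is to establish two things independently: that $f_\lambda$ satisfies the admissibility condition \#2 on $\partial\Omega$, and that the explicit candidate $u_\lambda(x_1,x_2)=f_\lambda(x_1)$ is a least gradient function whose trace is $f_\lambda$. Together these prove (a); moreover, since $f_\lambda$ is continuous and \#2 holds, Theorem \ref{main1} independently guarantees a continuous solution, necessarily equal to $u_\lambda$ by the uniqueness noted right after that theorem.

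To check admissibility I would argue flat part by flat part. On the two vertical sides $\{\pm L\}\times[-1,1]$ one has $f_\lambda\equiv 0$, which is monotone, so condition \#1 (Definition \ref{admC1}) holds. On each horizontal side, $f_\lambda(\cdot,\pm1)=\min\{L^2-x_1^2,\,g(L-\lambda)\}$ is strictly increasing, then constant, then strictly decreasing; it is not monotone, so condition \#2 must be verified. Its unique hump is the segment $I=[-(L-\lambda),L-\lambda]\times\{\pm1\}$, with value $e=g(L-\lambda)$ and length $|a-b|=2(L-\lambda)$. The set $f_\lambda^{-1}(e)\cap(\partial\Omega\setminus I)$ is exactly the mirror hump on the opposite horizontal side, and the nearest points of it to $a$ and to $b$ lie directly across the strip; hence the left side of \eqref{A} is a fixed constant determined by $\Omega$, and the hypothesis $0<\lambda<1$ is precisely what makes \eqref{A} a strict inequality. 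The minimizers $y_i,z_i$ then sit on the opposite horizontal side, so $y_i,z_i\in\partial\Omega\setminus\ell$, as Definition \ref{admC2} requires.

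Next I would verify the candidate. Since $u_\lambda$ is Lipschitz on $\overline\Omega$, its trace is its boundary restriction, which by construction is $f_\lambda$. To see $u_\lambda$ is a least gradient function I would describe its superlevel sets: for $0<t<e$, $\{u_\lambda\ge t\}=[-\sqrt{L^2-t},\sqrt{L^2-t}\,]\times[-1,1]$; for $t=e$, $\{u_\lambda\ge t\}=[-(L-\lambda),L-\lambda]\times[-1,1]$ (a fat level set); and for $t\le0$ or $t>e$ it is $\overline\Omega$ or $\emptyset$. Each of these is convex, and its boundary relative to $\Omega$ is a union of vertical chords of the rectangle; chords of a convex planar domain are length minimizing, so by the classical characterization of planar least gradient functions (see \cite{sternberg}) $u_\lambda$ is a least gradient function. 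Having the prescribed trace, it solves \eqref{lg}. For (b) the same computation shows that \eqref{A} now compares the shorter length $2(L-\lambda)$ against the same fixed constant, so \eqref{A} fails for $\lambda\in(1,L)$ and \#2 cannot hold on the horizontal sides. For (c), at $\lambda=1$ the two sides of \eqref{A} are equal, so \#2 (a strict inequality) is not satisfied; however the verification in (a) that $f_1(x_1)$ is a least gradient function with trace $f_1$ used no strict inequality and goes through verbatim, so $u(x_1,x_2)=f_1(x_1)$ is still a solution of \eqref{lg}. This is the message of the example: the admissibility conditions are sufficient for existence but, at the threshold, not necessary.

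The step I expect to be the real work is the verification in (a) that $u_\lambda$ is a genuine least gradient function: one must confirm that the vertical chords bounding the (possibly fat) superlevel sets are area minimizing in $\Omega$ and that the fat level set at height $e$ creates no competitor with strictly smaller total variation. In a rectangle this is elementary, but it is the only place where the geometry of $\Omega$ genuinely enters, and it is what makes the stated $\lambda$-thresholds sharp.
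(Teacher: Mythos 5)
You leave unverified precisely the two computations from which the number $1$ is supposed to emerge, and neither comes out the way you assert. For the admissibility check you claim the left-hand side of \eqref{A} is ``a fixed constant determined by $\Omega$'' and that $0<\lambda<1$ is ``precisely'' what makes \eqref{A} strict, but you never evaluate either side. Doing so with the domain as written: the hump on the top edge is $[-(L-\lambda),L-\lambda]\times\{1\}$, the set $f_\lambda^{-1}(e)\cap(\partial\Omega\setminus I)$ is the mirror hump on the bottom edge, each of the two minimal distances equals $2$ (the height of the rectangle), and $|a-b|=2(L-\lambda)$; so \eqref{A} reads $4<2(L-\lambda)$, i.e.\ $\lambda<L-2$, not $\lambda<1$. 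This is exactly the point that has to be checked, and the claimed equivalence with $\lambda\lessgtr 1$ is not established (the paper's own proof of Corollary \ref{co-1} is silent on this computation, but your write-up makes a definite assertion that does not survive the arithmetic).

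The more serious flaw is the deduction ``each superlevel set is bounded by vertical chords; chords of a convex domain are length minimizing; hence $u_\lambda$ is of least gradient.'' The relevant criterion is that each superlevel set be perimeter-minimizing in $\Omega$, and a set bounded by two geodesic chords need not be: the strip $[-(L-\lambda),L-\lambda]\times[-1,1]$ has relative perimeter $4$, while removing $[-(L-\lambda),L-\lambda]\times[-1+\delta,1-\delta]$ (a compactly supported modification) leaves two thin slabs of total relative perimeter $4(L-\lambda)+4\delta$, which is strictly smaller once $L-\lambda<1$. So your argument, taken at face value, would prove that $u_\lambda$ solves \eqref{lg} for every $\lambda\in(0,L)$, contradicting part (b) and the subsequent non-existence proposition; the competitor analysis you defer to your closing paragraph is not a routine verification but the very place where the $\lambda$-threshold is decided, and it must be carried out. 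Note also that for part (c) the paper proceeds differently: it obtains the solution for $\lambda=1$ as the $L^1$ (pointwise) limit of the $u_\lambda$ as $\lambda\to 1$ and invokes \cite{miranda} to keep the least gradient property, rather than re-running the level-set verification at the borderline value as you propose; given the gap above, the paper's limiting route is the safer one, since it only needs part (a) on the open range of $\lambda$ where the strict inequalities hold.
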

\begin{proof}
We take care of part a).
The admissibility condition \# 2 is easy to check. The formula for
$u_\lambda$ is easy to find after discovering solutions in
$\Omega_n$. Finally, we notice that $u$ is a pointwise limit of
$u_\lambda$ as $\lambda$ goes to 1. We use here the fact that an $L^1$
limit of least gradient functions is of least gradient. Moreover, it
is obvious that the limit has the right trace. 

The rest may be established in a similar way.
\end{proof}

Now, we consider a case of discontinuous data. We set $h(x) = x+L$ for $|x|<L$.  For $\mu\in\bR$ we define,
$$
v_\mu (x_1,x_2) =
\left\{
\begin{array}{ll}
h(x_1) & x_1 \in (-L,L), \ x_2 =1,\\
0 & x_1 \in (-L,L), \ x_2 =0 \hbox{ or } x_1 =-L, \ x_2\in(-1,1),\\
\mu & x_1 =1, \ x_2\in (-1,1).
\end{array}
\right.
$$
We notice:
\begin{cor}\label{co-2}
 Let us suppose $\Omega$ and  $v_\mu\in BV(\d\Omega)$ are defined above. Then,\\
(a) for $\mu > 2L$ condition \#2 is violated;\\
(b) for $\mu = 2L$ conditions \#1 and  \#2 hold;\\
(c) for $\mu < 2L$ condition \#1  is violated.
\end{cor}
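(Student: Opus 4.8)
The plan is to read the admissibility conditions for $v_\mu$ directly off the geometry of the rectangle $\Omega=(-L,L)\times(-1,1)$, treating $\mu>2L$, $\mu=2L$ and $\mu<2L$ in turn. Recall that $v_\mu$ equals $h(x_1)=x_1+L$ on the top edge $T=[-L,L]\times\{1\}$ (so it increases strictly from $0$ to $2L$), equals the constant $\mu$ on the right edge $R=\{L\}\times[-1,1]$, and equals $0$ on the bottom and left edges. Since $h(-L)=0$, $v_\mu$ is continuous at the two left corners; at $(L,1)$ its one-sided limits are $2L$ (along $T$) and $\mu$ (along $R$), and at $(L,-1)$ they are $\mu$ (along $R$) and $0$ (along the bottom edge). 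Thus $v_\mu\in BV(\partial\Omega)$ but is discontinuous for every $\mu$, so the operative notions are those of Definitions \ref{admDC} and \ref{admDC2}; on the left edge $v_\mu\equiv 0$ is monotone and continuous at both endpoints, so the entire question concentrates on $T$, $R$ and the bottom edge.

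For part (a), assume $\mu>2L$. On $\partial\Omega\setminus R$ one has $v_\mu\le 2L<\mu$, so $v_\mu$ attains its maximum precisely on $R$; thus $R$ is a single hump with $e=\mu$ and with $a,b$ the two right corners, $|a-b|=2$. Since $v_\mu^{-1}(\mu)\cap(\partial\Omega\setminus R)=\emptyset$, the left-hand side of \eqref{DA} equals $\dist(a,\emptyset)+\dist(b,\emptyset)=+\infty$, which is not less than $2$, so admissibility condition \#2 is violated on $R$.

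For part (b), assume $\mu=2L$. Then $v_{2L}$ is continuous at $(L,1)$ and is non-decreasing along the arc that runs up the left edge, across $T$ and down $R$ (it is $0$, then climbs from $0$ to $2L$, then stays at the constant $2L$); the constancy of $v_{2L}$ on $R$ is therefore the tail of this monotone run, not an isolated interior extremum, so no flat part carries a hump and condition \#2 holds vacuously. For condition \#1 of Definition \ref{admDC}: $v_{2L}$ is continuous at both endpoints of $T$ and of the left edge, so alternative (i) applies there; on $R$ (resp.\ on the bottom edge) the only endpoint of discontinuity is $(L,-1)$, and $v_{2L}$ restricted to that flat part together with a short arc of $\partial\Omega$ beyond $(L,-1)$ is monotone --- non-increasing from $2L$ to $0$ (resp.\ non-decreasing from $0$ to $2L$) --- so alternative (ii) applies. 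Hence both conditions hold.

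For part (c), assume $\mu<2L$; it is enough to exhibit a single flat part on which condition \#1 fails, and $T$ serves. The datum $v_\mu|_T=x_1+L$ is strictly increasing, hence monotone, but $v_\mu$ is discontinuous at the endpoint $(L,1)$, the one-sided limit along $T$ being $2L$ while $v_\mu\equiv\mu<2L$ on $R$, so alternative (i) of Definition \ref{admDC} fails. For every $\varepsilon>0$, $v_\mu$ restricted to $T\cup(B((L,1),\varepsilon)\cap\partial\Omega)$ takes values arbitrarily close to $2L$ on $T$ and the value $\mu<2L$ on the part lying in $R$, hence is not monotone; since $v_\mu$ is continuous at the other endpoint $(-L,1)$, this disposes of alternative (ii) (which would force $x_0=(L,1)$) and, together with the fact that near $(-L,1)$ one has $v_\mu\equiv 0$, of alternative (iii). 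So condition \#1 is violated on $T$. The point that needs care throughout is exactly this trichotomy: one must decide, for the constant datum on $R$, whether it is an isolated extremum of $v_\mu$ on $\partial\Omega$ --- so that $R$ is a hump and condition \#2 is operative, as in (a) --- or is absorbed into the monotone behavior on the adjacent edge $T$ --- so that condition \#1 is operative, as in (b) and (c); the borderline is $\mu=2L=\sup_T v_\mu$, and once it is settled the remaining verifications, including the uneventful checks on the bottom and left edges, are immediate from the rectangular geometry.
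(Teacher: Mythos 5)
Your proposal is correct and follows the same route the paper intends: a direct, case-by-case check of Definitions \ref{admDC} and \ref{admDC2} on each edge of the rectangle, which the paper's own proof dismisses as ``easy to see'' and ``easy to check''; you in fact supply the verification the paper omits, including the correct reading of the obvious typos in the definition of $v_\mu$ (the bottom edge is $x_2=-1$ and the right edge is $x_1=L$). Your treatment of the borderline case $\mu=2L$ --- classifying the constant value on the right edge as the tail of a monotone run governed by condition \#1 (alternative (ii)) rather than as a hump subject to \eqref{DA} --- is exactly the reading under which the paper's assertion (b) is true and is consistent with Remark \ref{rem2}.
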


\begin{proof}
Part (a) and (c) are easy to see.
If $\mu = 2L$, then it is easy to check
the admissibility conditions \#1 and \# 2. The positions of sets $\d\{ u\ge t\}$ follow from solutions of the approximate problem on $\Omega_n$.

\end{proof}

\begin{proposition}
In all the cases above, when the admissibility conditions are
violated, i.e. in Corollary \ref{co-1} (b),  Corollary \ref{co-2} (a),
(c) there is no solution to the least gradient problem.
\end{proposition}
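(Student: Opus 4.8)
The plan is to argue by contradiction: assume $u$ solves (\ref{lg}) in one of the listed cases, so $u\in BV(\Omega)$, $Tu=f$, and $u$ minimises $\int_\Omega|Du|$. I would use the standard description of least gradient functions in a planar convex domain: each super-level set $E_t=\{u\ge t\}$ has $\partial E_t\cap\Omega$ equal to a union of pairwise non-crossing chords of $\Omega$, each chord is area-minimising, hence a straight segment with both endpoints on $\partial\Omega$, and the boundary datum forces every chord to join two points at which the one-sided values of $f$ reach or jump across the level it separates. Two elementary facts do the work: a chord can never run along a flat part $\ell$ (else it would lie inside $\partial\Omega$ and separate nothing), and $\int_\Omega|Du|=\int_{\bR}P(E_t;\Omega)\,dt$.

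For the discontinuous examples I would run a direct ``forbidden chord'' argument. In Corollary~\ref{co-2}(c), with $\mu<2L$: for every level $t\in(\mu,2L)$ the set $\{v_\mu\ge t\}$ meets $\partial\Omega$ exactly in the sub-segment $[t-L,L]\times\{1\}$ of the top flat part, one endpoint of which, $(t-L,1)$, is interior to that flat part while the other is the corner $(L,1)$; moreover $v_\mu$ reaches or jumps across $t$ at no other boundary point. Hence the chord bounding $E_t$ would have to join $(t-L,1)$ to $(L,1)$, i.e. lie along the flat part -- impossible. Since this fails for a whole interval of levels, there is no least gradient function with trace $v_\mu$, so (\ref{lg}) has no solution. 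In Corollary~\ref{co-2}(a), with $\mu>2L$, the same phenomenon occurs for $t\in(2L,\mu)$: there $\{v_\mu\ge t\}$ meets $\partial\Omega$ in the whole right flat part $\{L\}\times[-1,1]$, so $E_t$ is bounded by a chord joining its two endpoints, again a forbidden chord. The border case $\mu=2L$ escapes this precisely because $\{v_\mu\ge t\}$ then also contains the right side and the bounding chord may leave it at the \emph{other} corner, which is admissible.

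For Corollary~\ref{co-1}(b) the obstruction is energetic rather than topological, and I would argue by exhibiting a strictly cheaper competitor, so that the infimum in (\ref{lg}) is not attained. First one shows that a solution is forced to create a fat level set at the plateau value $e_i=g(L-\lambda)$, attached to the plateau $I_i\subset\ell$ and bounded inside $\Omega$ by two chords issuing from the endpoints $a_i,b_i$ of $I_i$; by the trace identification and by minimality these chords must terminate at the nearest points of $f^{-1}(e_i)$ lying off $\ell$, that is at the points $y_i,z_i$ from (\ref{defyz}), so they contribute $\dist(a_i,y_i)+\dist(b_i,z_i)$ to $\int_\Omega|Du|$. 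One then replaces this ``cap'' by a thin sliver along $I_i$: a chord parallel to $\ell$ at distance $\delta$, closed off by two short segments of length $O(\delta)$. The resulting $v$ has $Tv=f$ away from $\ell$, its trace on $I_i$ tends to $e_i$ as $\delta\to0$, and its energy near $I_i$ tends to $|a_i-b_i|$, which is strictly smaller than $\dist(a_i,y_i)+\dist(b_i,z_i)$ exactly because condition \#2 fails strictly. Hence $\inf\{\int_\Omega|Dv|:Tv=f\}<\int_\Omega|Du|$, contradicting minimality of $u$; and since the $\delta=0$ configuration is degenerate, the infimum is not attained, so there is no solution.

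\emph{Main obstacle.} The delicate point is the rigidity step behind Corollary~\ref{co-1}(b): proving that a putative solution \emph{must} carry a fat level set bounded exactly by the two minimal chords $[a_i,y_i]$, $[b_i,z_i]$, with no genuinely cheaper configuration available. This needs the chord structure of the $\partial E_t$, their non-crossing as $t$ varies, and the trace identification all at once, in order to exclude disconnected caps and chords landing at farther points of $f^{-1}(e_i)$, and then to convert ``sub-optimal configuration'' into a quantitative energy gap. A lesser technical nuisance in the discontinuous cases is carrying out the argument with the good representative of $f$ and its one-sided limits, so that the humps and the quantities in (\ref{DA}) are unambiguous; once that is fixed the forbidden-chord step is immediate.
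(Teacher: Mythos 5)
Your treatment of the two discontinuous cases, Corollary \ref{co-2}(a) and (c), is essentially the paper's own argument: identify an interval of levels $t$ that the trace datum only ``reaches'' on a flat part, observe that $\partial E_t\cap\Omega$ would then be a chord with both endpoints on that flat part, hence contained in $\partial\Omega$, and derive a contradiction. The one step you gloss over, and which the paper supplies explicitly, is why such levels are actually realized by $u$ in the interior: the paper slices $u$ along lines $x_2=\mathrm{const}$, uses Fubini to see that a.e.\ one-dimensional section is $BV$ with one-dimensional trace $\mu$ at $x_1=L$, and thereby produces interior points $(x_n,x_2)$ with $u(x_n,x_2)=t_n\in(2L,\mu)$ lying on the forbidden chords. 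You should either include this slicing step or argue, equivalently, that $E_t\cap\Omega$ must be empty modulo null sets for all $t$ in the gap interval and that this contradicts the prescribed trace on the right edge.

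For Corollary \ref{co-1}(b) you correctly recognize that the forbidden-chord mechanism is unavailable (the continuous datum $f_\lambda$ omits no values in its range), and you propose instead a non-attainment argument: the infimum of the energy is approached by degenerate ``sliver'' configurations and is strictly below what any genuine competitor achieves. This is a genuinely different route from the paper's written proof, which only carries out the $v_\mu$ computation; but as it stands it has two real gaps. First, the one you flag yourself: you must prove a matching lower bound, namely that for a set of levels $t$ of positive measure \emph{every} set of finite perimeter with trace $\chi_{\{f_\lambda\ge t\}}$ has perimeter bounded below by the two-chord value, while the infimum is the strictly smaller sliver value; exhibiting one expensive configuration and one cheap degenerate one does not rule out intermediate minimizers. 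Second, a quantitative accounting issue: condition \eqref{A} compares $\dist(a_i,y_i)+\dist(b_i,z_i)$ with $|a_i-b_i|$ for a \emph{single} hump, but in the rectangle the two vertical chords realize the distances to $y_i,z_i$ for the top and the bottom plateau \emph{simultaneously}, so the per-level energy comparison is between the total chord length and the sum of the lengths of \emph{both} slivers. The strict failure of \eqref{A} for one hump therefore does not by itself yield a strictly cheaper competitor; you need to redo the comparison with the correct bookkeeping (and check that it actually covers the whole parameter range $\lambda\in(1,L)$) before concluding that the infimum in \eqref{lg} is not attained.
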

\begin{proof}
Let us suppose otherwise. Then, we notice that for almost every
$x_2\in (-1,1)$ the function $u(\cdot, x_2)$ belongs to
$BV(-L,L)$. Indeed,
$$
\int_{-1}^1\int_{-L}^L | D_{x_1} u(x_1, x_2)| \,dx_2 \le 
\int_{-1}^1\int_{-L}^L | D u(x_1, x_2)| \le M.
$$
In particular  $u(\cdot, x_2)$ has a trace at $x_1 =L$ for
a.e. $x_2\in (-1,1)$. In other words, for any $x_n$ increasing sequence
converging to $L$, we have 
$$
\lim_{n\to\infty} u(x_n, x_2) = \mu.
$$
We set $t_n = u(x_n, x_2)$ and consider 
$$
E_{t_n} = \{ (x_1, x_2) :\  u(x_1, x_2)\ge t_n\}
$$
for values $t_n\in (2L, \mu)$. Since $\partial E_{t_n}$ is a minimal
surface, then it must be a line segment, which must intersect
$\partial\Omega$. 
We set $\{a_n, b_n\} = \partial E_{t_n} \cap  \partial \Omega.$
Since $v_\mu$ does not attain any values in the interval $(2L, \mu)$,
we deduce that $a_n, b_n$ must belong to $\partial\Omega \setminus
\{(L, \pm 1)\}$. But this implies that all $\partial E_{t_n}$ are
contained in $(L, x_2): \ |x_2| \le 1\}$, i.e. in the boundary of
$\Omega$, contrary to the assumptions that $(x_n,x_2)\in \Omega$. We
reached a contradiction. Our claim follows.
\end{proof}

Finally, we construct a region $\Omega$ and a continuous function on its boundary with infinitely many humps. We define $\Omega$ to be bounded by the following curves, $\ell_1 = [0, L_1]\times \{0\}$, $\ell_2$ is a line segment of length $L_1$ forming and angle $\alpha$ at the origin. Moreover, $\alpha(0, \frac\pi2)$. The third arc, $\cC$,  is a part of a half-circle with radius 
$r= \sqrt 2 L_1 \sqrt{1-\cos \alpha}$. 

We set 
$$
L_{2k+1} = L_1 \prod_{i=1}^k \left( 
\frac{(1-\sin \alpha)^2}{1 + \sin \alpha} - \varepsilon_i \right),
\qquad L_{2k} = L_{2k-1} \frac{ 1-\sin \alpha}{1 + \sin \alpha},\quad k\ge 1,
$$
where $0<\varepsilon_i$ is decreasing to zero and $\varepsilon_1 < \frac 12 \frac{(1-\sin \alpha)^2}{1 + \sin \alpha}$.

We set the position of the hump by defining $a_k := L_{2k}$, $b_k = L_{2k-1}$. We define $f$ on $\ell_1$ by setting $f(x) = \frac{(-1)}{k}^{k+1}$ for $x\in (a_k, b_k)$, $k\ge1$. We extend $f$ to  $\ell_1\setminus \bigcup_{k=1}^\infty (a_k, b_k)$ by linear functions.

Let $\pi$ denote the orthogonal projection onto the line containing  $\ell_2$. We set 
$$
a_k' := \pi (a_k,0), \qquad b_k' := \pi ( b_k,0).
$$
We set  $f(x) = \frac{(-1)^{k+1}}{k}$ for $x\in (a_k', b_k')$, $k\ge1$. We extend $f$ to  $\ell_2\setminus \bigcup_{k=1}^\infty (a_k, b_k)$ by linear functions. We set $f$ on $\cC$ to be equal to 1.

It is easy to check see that we have just proved the following fact:
\begin{proposition}
Let us suppose that $\Omega$ is given above. Then, function $f$ constructed above is continuous on $\partial \Omega$ and it satisfies the admissibility condition \#2. \qed
\end{proposition}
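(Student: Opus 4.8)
The plan is to check the two assertions in turn. Continuity of $f$ on $\partial\Omega=\ell_1\cup\ell_2\cup\cC$ is the easy part. First one notes that $L_m\downarrow0$: by the bound on $\varepsilon_1$ and the monotonicity of $(\varepsilon_i)$ every factor $\frac{(1-\sin\alpha)^2}{1+\sin\alpha}-\varepsilon_i$ lies in $(0,1)$, so $L_{2k+1}\to0$, hence all $L_m\to0$ and the humps shrink to the origin. Away from the origin and from the two corners where $\ell_1,\ell_2$ meet $\cC$, the function $f$ is constant on the interior of each hump and affine on each interval between consecutive humps, and one-sided values agree at the shared endpoints by construction. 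At the origin $f|_{\ell_1}$ and $f|_{\ell_2}$ both tend to $0$, since the hump values $e_k=(-1)^{k+1}/k$ tend to $0$ and on each connecting interval $f$ lies between two consecutive $e_k$'s. At the two corners every relevant trace equals $1$: $f\equiv e_1=1$ on $(L_2,L_1]\subset\ell_1$, $f\equiv1$ on the outermost affine piece $[b_1',F]$ of $\ell_2$ (with $F$ the endpoint of $\ell_2$ other than the origin), and $f\equiv1$ on $\cC$. Hence $f\in C(\partial\Omega)$.

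For the admissibility condition \#2 (Definition \ref{admC2}; $\cC$ is not a segment, so it carries no hump) I would first list all humps: on $\ell_1$ they are the $I_k$ with $\bar I_k=[L_{2k},L_{2k-1}]$ and $f\equiv e_k$, $k\ge1$; on $\ell_2$ they are the $I_k'$ with $\bar I_k'=[a_k',b_k']$ and $f\equiv e_k$ for $k\ge2$, together with one value-$1$ hump whose closure is $[a_1',F]$. Since the $e_k$ are pairwise distinct and the affine pieces interpolate \emph{monotonically} between consecutive values, the level set $f^{-1}(e_k)$ meets $\ell_1\cup\ell_2$ only in the two companion humps $I_k$ and $I_k'$, and it meets $\cC$ only when $e_k=1$; consequently, for $k\ge2$, the point of $f^{-1}(e_k)\setminus I_k$ nearest an endpoint of $I_k$ lies on the companion hump $I_k'$ on the other side of the wedge, and symmetrically for $I_k'$.

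It then remains to estimate the distances in \eqref{A}. Writing $s=\sin\alpha$ and $a_k'=\pi(a_k,0)$, $b_k'=\pi(b_k,0)$ for the orthogonal projections onto the line of $\ell_2$, a direct computation gives $|a_k-a_k'|=L_{2k}s$ and $|b_k-b_k'|=L_{2k-1}s$, with $a_k',b_k'$ the endpoints of $I_k'$; and whether the perpendicular foot of $b_k'$ on $\ell_1$ falls inside $I_k$ (it does, because $(1+s)^2\ge1$) decides whether a shorter competitor, of length $L_{2k-1}s\cos\alpha$, enters when one checks \#2 for the hump $I_k'$. Plugging these lengths into \eqref{A} and using the recursion $L_{2k}=L_{2k-1}\frac{1-\sin\alpha}{1+\sin\alpha}$ together with the defining factors $\frac{(1-\sin\alpha)^2}{1+\sin\alpha}-\varepsilon_i$ for the odd-indexed $L$'s, the inequalities \eqref{A} collapse to elementary inequalities in $\alpha$; the strictly positive perturbations $\varepsilon_i$ are precisely what make these strict, and uniformly in $k$. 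For the value-$1$ hump $[a_1',F]$ one endpoint is $F$, which is a limit of points of $\cC$ carrying the value $1$, so that distance is $0$ and the other is estimated as above. Finally, in every instance the points $y_i,z_i$ of \eqref{defyz} sit on a boundary piece different from the flat part carrying the hump — on $\ell_2$ when the hump is on $\ell_1$, on $\ell_1$ or $\cC$ when it is on $\ell_2$ — so the second requirement of Definition \ref{admC2} also holds.

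The step I expect to be most delicate is the bookkeeping near the accumulation vertex: one must verify that no connecting interval between \emph{distant} humps re-attains a value $e_k$ at a point closer to $I_k$ than its companion $I_k'$ across the wedge, and that the geometric decay of $(L_m)$, reinforced by the perturbations $(\varepsilon_i)$, keeps every instance of \eqref{A} strict with a margin that does not degenerate as $k\to\infty$.
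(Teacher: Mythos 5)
The paper offers no argument for this proposition beyond ``it is easy to check'', so the only question is whether your verification actually goes through --- and at its decisive step it does not. Your continuity argument is fine, and your identification of the humps and of the relevant distances ($|a_k-a_k'|=L_{2k}\sin\alpha$, $|b_k-b_k'|=L_{2k-1}\sin\alpha$, and the perpendicular distance $L_{2k-1}\sin\alpha\cos\alpha$ from $b_k'$ to $\ell_1$) is correct. But the sentence claiming that the inequalities \eqref{A} ``collapse to elementary inequalities in $\alpha$'' which the ``strictly positive perturbations $\varepsilon_i$'' render strict is precisely where the proof has to be carried out, and the computation does not come out as you assert. For the hump $I_k=[L_{2k},L_{2k-1}]\subset\ell_1$ the left side of \eqref{A} equals $(L_{2k}+L_{2k-1})\sin\alpha$ and the right side equals $L_{2k-1}-L_{2k}$; the relation $L_{2k}=L_{2k-1}\frac{1-\sin\alpha}{1+\sin\alpha}$ makes these two quantities \emph{exactly equal}, so the required strict inequality fails. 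The $\varepsilon_i$ cannot rescue this: they enter only the recursion for the odd-indexed lengths, i.e.\ they control the gap between consecutive humps ($L_{2k+1}$ versus $L_{2k-1}$), whereas \eqref{A} for the $k$-th hump pair depends only on the ratio $L_{2k}/L_{2k-1}$, in which no $\varepsilon_i$ appears. Worse, for the companion hump $I_k'=[a_k',b_k']\subset\ell_2$ your own lengths give a left side of $L_{2k}\sin\alpha+L_{2k-1}\sin\alpha\cos\alpha$ against $|a_k'-b_k'|=(L_{2k-1}-L_{2k})\cos\alpha$, and after substituting $L_{2k}=L_{2k-1}\frac{1-\sin\alpha}{1+\sin\alpha}$ the inequality \eqref{A} reduces to $1<\cos\alpha$, which is false for every $\alpha\in(0,\frac{\pi}{2})$.

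So the verification cannot be completed with the constants as given: the binding constraint comes from the $\ell_2$-humps and requires $L_{2k}/L_{2k-1}$ strictly below $\frac{\cos\alpha\,(1-\sin\alpha)}{\sin\alpha+\cos\alpha}$, which is strictly smaller than $\frac{1-\sin\alpha}{1+\sin\alpha}$. Either the ratio in the construction must be decreased (or the humps on $\ell_2$ placed differently), or the claim fails; in any case your argument, which defers the computation and asserts an outcome that the arithmetic contradicts, has a genuine gap at its central step. Incidentally, the part you flag as ``most delicate'' --- that no distant connecting interval re-attains the value $e_k$ --- is in fact unproblematic, exactly for the reason you give: the $e_k$ alternate in sign with strictly decreasing modulus, so each value is attained only on its two companion humps (plus $\cC$ when $e_k=1$).
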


\section*{Acknowledgement} The work of the authors was in part
supported by the Research Grant 2015/19/P/ST1/02618 financed by the
National Science Centre, Poland, entitled: Variational Problems in
Optical Engineering and Free Material Design.

PR was in part supported by  the Research Grant no 2013/11/B/ST8/04436 financed by the National
Science Centre, Poland, entitled: Topology optimization of engineering structures. An approach synthesizing the methods of:
free material design, composite design and Michell-like trusses.

The authors also thank Professor Tomasz Lewi\'nski of Warsaw Technological University for stimulating discussions and constant encouragement.


\vspace{-.2cm}
\hspace{3cm}
\begin{wrapfigure}{l}{0.15\textwidth}
\includegraphics[width=2.5 cm, height= 2cm]{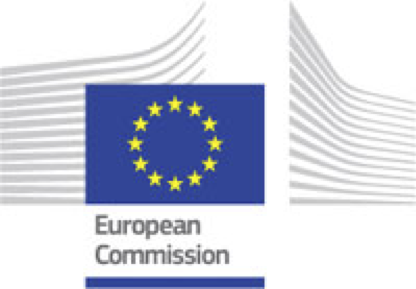}  
\end{wrapfigure}\\
\\{\footnotesize This project has received funding from the European Union's Horizon 2020 research and innovation program under the Marie Curie grant agreement No 665778.}
\newline

\end{document}